\definecolor{wwffqq}{rgb}{0.4,1,0}
\definecolor{yqqqqq}{rgb}{0.5,0,0}
\definecolor{qqqqcc}{rgb}{0,0,0.8}
\definecolor{zzttqq}{rgb}{0.6,0.2,0}
\definecolor{ffdxqq}{rgb}{1,0.84,0}
\definecolor{qqttzz}{rgb}{0,0.2,0.6}
\definecolor{qqqqff}{rgb}{0,0,1}
\definecolor{uuuuuu}{rgb}{0.27,0.27,0.27}
\definecolor{verdfort}{rgb}{0.5,0.8,0.5}
\newcommand*\circled[1]{\tikz[baseline=(char.base)]{
            \node[shape=circle,draw,inner sep=2pt] (char) {#1};}}
\newcommand*\squared[1]{\tikz[baseline=(char.base)]{
            \node[shape=rectangle,draw,inner sep=2pt] (char) {#1};}}
\newcommand{\C}{{\mathbb C}}       
\newcommand{\R}{{\mathbb R}}       
\newcommand{\N}{{\mathbb N}}       
\newcommand{\DDD}{{\mathbb D}}
\newcommand{\diam}{{\rm diam}}
\newcommand{\dist}{{\rm dist}}
\newcommand{\Dist}{{\rm D}}
\newcommand{\rf}[1]{{(\ref{#1})}}
\newcommand{\supp}{{\rm supp}}
\newcommand{\norm}[1]{{\left\| {#1} \right\|}}
\newtheorem{theorem}{Theorem}
\newtheorem*{theorem*}{Theorem}
\newtheorem{lemma}[theorem]{Lemma}
\newtheorem{claim}[theorem]{Claim}
\newtheorem{klemma}[theorem]{Key Lemma}
\newtheorem*{coro*}{Corollary}
\newtheorem{propo}[theorem]{Proposition}
\newtheorem{definition}[theorem]{Definition}
\newtheorem{example}[theorem]{Example}
\newtheorem{rem}[theorem]{Remark}
\numberwithin{subsection}{section}
\numberwithin{theorem}{section}
\numberwithin{equation}{section}
\numberwithin{figure}{section}
\title{A T(P) theorem for Sobolev spaces on domains}
\author{Mart\'i Prats and Xavier Tolsa
\thanks{MP (Departament de Ma\-te\-m\`a\-ti\-ques, Universitat Aut\`onoma de Bar\-ce\-lo\-na, Catalonia): \texttt{mprats@mat.uab.cat}.
XT (Instituci\'{o} Catalana de Recerca i Estudis Avan\c{c}ats (ICREA) and Departament de
Ma\-te\-m\`a\-ti\-ques, Universitat Aut\`onoma de Bar\-ce\-lo\-na, Catalonia): \texttt{xtolsa@mat.uab.cat}}}
\begin{document}
\maketitle

\begin{abstract} Recently, V. Cruz, J. Mateu and J. Orobitg have proved a T(1) theorem for the Beurling transform in the complex plane. It asserts that given $0<s\leq1$, $1<p<\infty$ with $sp>2$ and a Lipschitz domain $\Omega\subset \mathbb{C}$, the Beurling transform $Bf=- {\rm p.v.}\frac1{\pi z^2}*f$ is bounded in the Sobolev space $W^{s,p}(\Omega)$ if and only if $B\chi_\Omega\in W^{s,p}(\Omega)$.

In this paper we obtain a generalized version of the former result valid for any $s\in \mathbb{N}$ and for a larger family of Calder\'on-Zygmund operators in any ambient space $\mathbb{R}^d$ as long as $p>d$. In that case we need to check the boundedness not only over the characteristic function of the domain, but over a finite collection of polynomials restricted to the domain. Finally we find a sufficient condition in terms of Carleson measures for $p\leq d$. In the particular case $s=1$, this condition is in fact necessary, which yields a complete characterization.
\end{abstract}

\section{Introduction}
The aim of the present article is to find necessary and sufficient conditions on certain singular integral operators to be bounded in the Sobolev space of a Lipschitz domain.  

An operator $T$ defined for $f\in L^1_{loc}(\R^d)$ and $x\in \R^d\setminus \supp(f)$ as 
$$Tf(x) = \int_{\R^d} K(x-y)f(y) dy,$$
is called a \emph{smooth convolution Calder\'on-Zygmund operator of order $n$} if it is bounded in the Sobolev space $W^{n,p}(\R^d)$ (the space of $L^p$ functions with distributional derivatives up to order $n$ in $L^p$) for every $1<p<\infty$ and its kernel $K$ satisfies
\begin{equation*}
|\nabla^jK(x)|\leq \frac{ C}{|x|^{d+j}}
\end{equation*}
 for $0\leq j \leq n$ (see Section \ref{secpreli} for more details). In the present article we deal with some properties of the operator $T$ truncated to a domain $\Omega$, defined as $T_\Omega(f)=\chi_\Omega \, T(\chi_\Omega \, f)$. 
 
In the complex plane, for instance, the \emph{Beurling transform}, which is defined as the principal value 
\begin{equation}\label{eqbeurling}
Bf(z):=-\frac{1}{\pi}\lim_{\varepsilon\to0}\int_{|w-z|>\varepsilon}\frac{f(w)}{(z-w)^2}dm(w),
\end{equation}
is a smooth convolution Calder\'on-Zygmund operator of any order with kernel
$K(z)=-\frac1{\pi \, z^2}.$

In the recent article \cite{cmo}, V\'ictor Cruz, Joan Mateu and Joan Orobitg, seeking for some results on the Sobolev smoothness of quasiconformal mappings proved the next theorem.
\begin{theorem*}[\cite{cmo}]
Let $\Omega$ be a bounded $C^{1+\varepsilon}$ domain (i.e. a Lipschitz domain with parameterizations of the boundary in $C^{1+\varepsilon}$) for a given $\varepsilon>0$, and let $1<p<\infty$ and $0<s\leq1$ such that $sp>2$. Then the truncated Beurling transform $B_\Omega$ is bounded in the Sobolev space $W^{s,p}(\Omega)$ if and only if $B(\chi_\Omega)\in W^{s,p}(\Omega)$.
\end{theorem*}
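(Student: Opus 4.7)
The necessity is essentially free: since $\Omega$ is bounded, $\chi_\Omega$ (seen as the constant function $1$ on $\Omega$) belongs to $W^{s,p}(\Omega)$, so if $B_\Omega\colon W^{s,p}(\Omega)\to W^{s,p}(\Omega)$ is bounded then $B_\Omega(\chi_\Omega)=\chi_\Omega\,B(\chi_\Omega)$ must itself lie in $W^{s,p}(\Omega)$, which is the claim. My plan therefore concentrates on the converse.

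The structural step is the commutator identity
\begin{equation*}
B_\Omega f \;=\; \chi_\Omega\,[B,M_f](\chi_\Omega)\;+\; f\cdot\chi_\Omega\,B(\chi_\Omega),
\end{equation*}
where $M_f$ denotes pointwise multiplication by $f$. The hypothesis $sp>2$ supplies the Morrey embedding $W^{s,p}(\Omega)\hookrightarrow C^{0,\alpha}(\overline\Omega)$ with $\alpha=s-2/p>0$, which I would use in two ways. First, since $W^{s,p}(\Omega)\subset L^\infty(\Omega)$ under this hypothesis, $W^{s,p}(\Omega)$ is a pointwise algebra, so the second summand---a product of $f\in W^{s,p}(\Omega)$ with $\chi_\Omega B(\chi_\Omega)\in W^{s,p}(\Omega)$ (by hypothesis)---lies in $W^{s,p}(\Omega)$ with the expected norm bound. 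Second, the H\"older cancellation $|f(z)-f(w)|\lesssim \|f\|_{C^{0,\alpha}}|z-w|^\alpha$ is what tames the singularity of $B$ inside the commutator.

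Writing out the commutator,
\begin{equation*}
[B,M_f]\chi_\Omega(z) \;=\; -\frac{1}{\pi}\int_\Omega\frac{f(z)-f(w)}{(z-w)^2}\,dm(w),
\end{equation*}
one sees that the effective kernel has fractional size $|z-w|^{-2+\alpha}$, so the commutator behaves like a smoothing operator rather than a singular one. I would estimate its $W^{s,p}(\Omega)$ norm by working directly with the Slobodeckij double integral, splitting the inner $w$-integration, for a given pair $z_1,z_2$, into a near regime $|z_1-w|\lesssim|z_1-z_2|$ (handled by the improved fractional kernel bound) and a far regime (handled by $|(z_1-w)^{-2}-(z_2-w)^{-2}|\lesssim |z_1-z_2|\,|z_1-w|^{-3}$ together with the H\"older bound on $f$).

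The main obstacle I foresee is recovering an estimate with the \emph{genuine} $W^{s,p}(\Omega)$-norm of $f$ on the right-hand side rather than merely its H\"older seminorm: a direct appeal to Morrey's embedding would give a bound that fails to reflect the fractional regularity of $f$ at the correct exponent, and would not produce a linear bounded operator on $W^{s,p}(\Omega)$. To close this gap I would invoke a Dorronsoro-type description of $W^{s,p}$ through averaged oscillations on a Whitney decomposition of $\Omega$, using the $C^{1+\varepsilon}$ regularity of $\partial\Omega$ both to secure a bounded extension operator $W^{s,p}(\Omega)\to W^{s,p}(\C)$ and to keep the geometry of boundary Whitney cubes uniform. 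This last point is what ultimately avoids the logarithmic losses that would otherwise appear when both $z_1$ and $z_2$ approach $\partial\Omega$ at the critical scaling $sp=2$.
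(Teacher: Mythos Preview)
This theorem is cited from \cite{cmo} and is not proved in the present paper; it serves as motivation. What the paper does prove is Theorem~\ref{theoTP}, the generalization to any $n\in\N$, $p>d$, and smooth convolution Calder\'on--Zygmund operators on Lipschitz domains in $\R^d$, and the method is structurally different from your sketch.

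Your approach---the commutator identity $B_\Omega f=\chi_\Omega[B,M_f]\chi_\Omega+f\cdot B_\Omega\chi_\Omega$ together with the algebra property of $W^{s,p}$ under $sp>2$---is essentially the original argument of Cruz, Mateu and Orobitg. The present paper instead builds an oriented Whitney covering with a chain structure, approximates $f$ on each Whitney cube $Q$ by a Taylor-type polynomial $\mathbf{P}^{n-1}_{3Q}f$, and proves a Key Lemma (Lemma~\ref{lemklemma}) reducing $\|T_\Omega f\|_{W^{n,p}}\lesssim\|f\|_{W^{n,p}}$ to control of $\sum_Q\|\nabla^n T_\Omega(\mathbf{P}^{n-1}_{3Q}f)\|_{L^p(Q)}^p$; Sobolev embedding (using $p>d$) then converts the polynomial hypothesis into this sum. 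The gain is that the argument works for any $n$ and any admissible operator, and needs only Lipschitz boundary; the commutator route, by contrast, is tied to the explicit kernel of $B$ and to $s\le1$.

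One comment on your self-critique: the concern that a bound through $\|f\|_{C^{0,\alpha}}$ ``would not produce a linear bounded operator on $W^{s,p}(\Omega)$'' is misstated. The map $f\mapsto B_\Omega f$ is linear regardless of how you estimate it, and composing an inequality $\|\,\cdot\,\|\le C\|f\|_{C^{0,\alpha}}$ with the embedding $\|f\|_{C^{0,\alpha}}\lesssim\|f\|_{W^{s,p}}$ is a perfectly valid operator bound. The genuine issue is whether the commutator inequality $\|\chi_\Omega[B,M_f]\chi_\Omega\|_{W^{s,p}(\Omega)}\lesssim\|f\|_{C^{0,\alpha}}$ actually holds: your near/far splitting in the Gagliardo seminorm, with $\alpha=s-2/p$, lands precisely at the critical scaling and a naive absolute-value estimate produces logarithmic divergences. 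One must therefore either exploit the cancellation in the \emph{complex} kernel $1/(z-w)^2$ (as opposed to $1/|z-w|^2$), or feed in finer local oscillation of $f$ as you suggest. That, and not linearity, is where the real work lies.
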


This was proved in fact for a wider class of even Calder\'on-Zygmund operators in the plane. Using a result in \cite{mov}, one can see that, if $\varepsilon>s$ and $\Omega$ is a $C^{1+\varepsilon}$ domain then $B\chi_\Omega\in W^{s,p}(\Omega)$, so we have that, assuming  the conditions in the previous theorem for $\Omega$, $s$ and $p$, one always has the Beurling transform bounded in $W^{s,p}(\Omega)$. Using this result, in \cite{cmo} the authors deduce the next remarkable theorem that we state here as a corollary.
\begin{coro*}[\cite{cmo}]
Let $\Omega$, $s$ and $p$  be as in the previous theorem with the restriction $\varepsilon>s$. Given a function $\mu$ such that $\supp(\mu)\subset \bar\Omega$ and $\norm{\mu}_\infty< 1$, consider the Beltrami equation
$$\bar\partial\phi(z)=\mu(z)\partial\phi(z),$$
and consider its principal solution $\phi(z)=z+C(h)(z)$, where $C$ stands for the Cauchy transform.
If $\mu\in W^{s,p}(\Omega)$, then  $h\in W^{s,p}(\Omega)$.
\end{coro*}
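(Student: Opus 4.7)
\emph{First step: fixed-point reformulation.} The plan is to apply $\bar\partial$ and $\partial$ to the representation $\phi=z+Ch$ to obtain $\bar\partial\phi=h$ and $\partial\phi=1+Bh$, so that the Beltrami equation becomes $h=\mu+\mu Bh$. Since $\mu$ vanishes outside $\bar\Omega$, the same is true of $h$ (either by iterating the equation, or because $\phi-z$ is holomorphic off $\supp\mu$ and decays at infinity), so on $\Omega$ the equation reads
\[
h \;=\; \mu + \mu\,B_\Omega h \qquad\text{with}\qquad B_\Omega f := \chi_\Omega\,B(\chi_\Omega f).
\]
Hence the corollary reduces to finding a fixed point of the affine map $T(g):=\mu+\mu\,B_\Omega g$ inside $W^{s,p}(\Omega)$.

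\emph{Second step: boundedness of $T$.} The theorem above, combined with the fact recalled in the introduction that $B\chi_\Omega\in W^{s,p}(\Omega)$ whenever $\Omega$ is $C^{1+\varepsilon}$ with $\varepsilon>s$, shows that $B_\Omega$ is bounded on $W^{s,p}(\Omega)$. The hypothesis $sp>2$ makes $W^{s,p}(\Omega)$ a Banach algebra continuously embedded in $L^\infty(\Omega)$, so multiplication by $\mu$ is bounded there too; hence $T$ maps $W^{s,p}(\Omega)$ to itself, and the question reduces to inverting $I-M_\mu B_\Omega$ on $W^{s,p}(\Omega)$.

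\emph{Third step: producing the fixed point; main obstacle.} I would regularise by choosing $\mu_n\in C_c^\infty(\Omega)$ with $\|\mu_n\|_{L^\infty}\le\|\mu\|_{L^\infty}<1$ and $\mu_n\to\mu$ in $W^{s,p}(\Omega)$. Classical Beltrami theory (Astala, Iwaniec) provides smooth principal solutions $\phi_n=z+Ch_n$ converging to $\phi$ in $L^p$, with $\partial\phi_n=1+Bh_n$ uniformly bounded in $L^\infty_{\rm loc}$ thanks to the H\"older estimate on $\mu_n$ coming from the embedding $W^{s,p}\hookrightarrow C^{s-2/p}$. The key task is then to extract uniform $W^{s,p}(\Omega)$-bounds on $h_n$ by feeding the tame product inequality
\[
\|\mu_n g\|_{W^{s,p}(\Omega)} \;\lesssim\; \|\mu_n\|_{L^\infty}\|g\|_{W^{s,p}(\Omega)} + \|\mu_n\|_{W^{s,p}(\Omega)}\|g\|_{L^\infty(\Omega)}
\]
into the equation $h_n=\mu_n+\mu_n B_\Omega h_n$, absorbing the top-order term via $\|\mu_n\|_{L^\infty}<1$, and controlling the lower-order factor $\|B_\Omega h_n\|_{L^\infty}$ by quasiconformal estimates rather than by Sobolev embedding. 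A weak limit then yields $h\in W^{s,p}(\Omega)$, and $L^p$-uniqueness identifies it with the principal solution. Closing this uniform estimate is the real difficulty: a naive Neumann iteration on $W^{s,p}(\Omega)$ fails because $\|\mu\|_{W^{s,p}}$ has no reason to be smaller than $1/\|B_\Omega\|_{W^{s,p}\to W^{s,p}}$, so the $L^\infty$-smallness of $\mu$ must be coupled to external a priori bounds from classical quasiconformal theory to produce a genuine contractive mechanism.
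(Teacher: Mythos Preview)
The paper does not contain a proof of this corollary: it is quoted verbatim from \cite{cmo} and attributed to that reference (``Using this result, in \cite{cmo} the authors deduce the next remarkable theorem that we state here as a corollary''). So there is no in-paper argument to compare against.

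That said, your outline follows the standard route taken in \cite{cmo}. Steps~1 and~2 are exactly right: the reformulation $h=\mu+\mu B_\Omega h$, the boundedness of $B_\Omega$ on $W^{s,p}(\Omega)$ coming from the cited theorem together with $B\chi_\Omega\in W^{s,p}(\Omega)$ when $\varepsilon>s$, and the Banach-algebra property of $W^{s,p}(\Omega)$ for $sp>2$ are precisely the ingredients used there. Your identification of the obstacle in Step~3 is also accurate: a naive Neumann series on $W^{s,p}(\Omega)$ does not close because only $\|\mu\|_\infty$, not $\|\mu\|_{W^{s,p}}$, is small. The argument in \cite{cmo} resolves this not by the regularisation-plus-quasiconformal-a-priori-bounds scheme you sketch, but by a Fredholm-type mechanism: one shows that $I-\mu B_\Omega$ is invertible on $W^{s,p}(\Omega)$ by combining the $L^p$-invertibility (where $\|\mu\|_\infty<1$ suffices since $\|B\|_{L^2\to L^2}=1$ and $p$ can be taken close to $2$) with the compactness of suitable commutators or embeddings to bootstrap to $W^{s,p}$. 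Your approach could also be made to work, but the closing estimate you flag as ``the real difficulty'' is genuinely nontrivial and would need the $L^\infty$ control on $B_\Omega h_n$ to come from H\"older regularity of the quasiconformal map (via Schauder-type estimates for the Beltrami equation with $C^\alpha$ coefficient), which is available but is a separate substantial input.
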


In this paper, we consider the extension of the theorem above to higher orders of smoothness $s$ and other ambient spaces $\R^d$. We have restricted ourselves to the study of the classical Sobolev spaces, where the smoothness is a natural number, so we denote it by $n$. The first result of the present article is the next theorem.

\begin{theorem}\label{theoTP}
Let $\Omega\subset \R^d$ be a Lipschitz domain, $T$ a smooth convolution Calder\'on-Zygmund operator of order $n \in \N$ and $p>d$. Then the following statements are equivalent:
\begin{enumerate}[a)]
\item The truncated operator $T_\Omega$ is bounded in $W^{n,p}(\Omega)$.
\item For every polynomial $P$ of degree at most $n-1$, we have that $T_\Omega(P)\in W^{n,p}(\Omega)$.
\end{enumerate}
\end{theorem}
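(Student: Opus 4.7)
The direction (a) $\Rightarrow$ (b) is immediate: since $\Omega$ is a bounded Lipschitz domain, every polynomial $P$ of degree at most $n-1$ restricted to $\Omega$ lies in $W^{n,p}(\Omega)$, so $T_\Omega P \in W^{n,p}(\Omega)$ whenever $T_\Omega$ is bounded on $W^{n,p}(\Omega)$.

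For the main direction (b) $\Rightarrow$ (a), the hypothesis $p > d$ is crucial. It yields the Sobolev embedding $W^{n,p}(\Omega) \hookrightarrow C^{n-1,\alpha}(\overline\Omega)$ with $\alpha = 1 - d/p$, so any $f \in W^{n,p}(\Omega)$ has pointwise control on $f$ and on $\partial^\beta f$ for $|\beta| \le n-1$ in terms of $\|f\|_{W^{n,p}(\Omega)}$. In particular, for each $z \in \Omega$ the Taylor polynomial $P_z$ of $f$ at $z$ of degree $n-1$ has coefficients uniformly bounded by $\|f\|_{W^{n,p}(\Omega)}$. Since the space of polynomials of degree $\le n-1$ is finite-dimensional and $T_\Omega$ is linear, hypothesis (b) then gives a uniform estimate
\begin{equation*}
\|T_\Omega P_z\|_{W^{n,p}(\Omega)} \le C\,\|f\|_{W^{n,p}(\Omega)}
\end{equation*}
with $C$ independent of $z$.

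The plan is then to localize using a Whitney decomposition $\{Q_j\}$ of $\Omega$: for $x$ in a cube $Q_j$ with center $z_j$, writing $P_j := P_{z_j}$, the formal decomposition $T_\Omega f(x) = T_\Omega P_j(x) + T_\Omega(f-P_j)(x)$ handles the first summand via the uniform bound above. For the remainder, Taylor's theorem combined with the embedding yields pointwise estimates of the form $|\partial^\beta(f-P_j)(y)| \le C\,|y-z_j|^{n-|\beta|-d/p}\,\|\nabla^n f\|_{L^p(Q_j^*)}$ on a suitable Whitney-enlargement $Q_j^*$ of $Q_j$; combined with the kernel estimates $|\nabla^j K(z)| \le C|z|^{-d-j}$ for $0 \le j \le n$, standard singular-integral arguments produce controlled bounds on $\nabla^k T_\Omega(f - P_j)$ for $0 \le k \le n$ inside $Q_j$. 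Contributions from cubes $Q_k$ distant from $Q_j$ should be handled by kernel decay and summed using the Whitney geometry.

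The main obstacle I anticipate is the global patching step: since the polynomial $P_j$ varies from one Whitney cube to another, the cube-by-cube decomposition cannot simply be differentiated as if $P_j$ were fixed, and the cross-interactions between neighbouring cubes must be organized compatibly. This will likely require a key lemma of $T(b)$-type flavour adapted to Sobolev smoothness, tracking the interplay between the polynomial hypothesis, the kernel smoothness of order $n$, and the Whitney geometry of $\Omega$, so that the cube-by-cube estimates sum to a global $L^p(\Omega)$ bound on $\nabla^n T_\Omega f$. The pointwise control afforded by $p > d$ is precisely what makes the polynomial hypothesis alone sufficient; for $p \le d$ one is forced to impose finer Carleson-type conditions, as mentioned in the abstract.
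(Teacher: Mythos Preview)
Your outline matches the paper's strategy: Whitney decomposition, polynomial approximation on each cube, the split $T_\Omega f = T_\Omega P_j + T_\Omega(f-P_j)$, Sobolev embedding for $p>d$, and a separate lemma for the remainder. You are right that the remainder estimate is the technical heart; the paper isolates it as its Key Lemma, valid in fact for all $1<p<\infty$, and proves it via Poincar\'e along Whitney chains together with a duality/maximal-function argument, not via pointwise H\"older bounds.

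There is, however, a concrete gap in your treatment of the polynomial term. The uniform estimate $\|T_\Omega P_j\|_{W^{n,p}(\Omega)} \le C\|f\|_{W^{n,p}(\Omega)}$ does \emph{not} control $\sum_j \|\nabla^n T_\Omega P_j\|_{L^p(Q_j)}^p$: summing a uniform bound over infinitely many Whitney cubes diverges. The paper's remedy is to expand every $P_j$ in a \emph{single} monomial basis $\{(x-x_0)^\lambda\}_{|\lambda|<n}$ centred at one fixed point $x_0\in\Omega$. The coefficients of $P_j$ in this basis are bounded by $C\|f\|_{W^{n,p}}$ via the Sobolev embedding (this is exactly where $p>d$ is used), and for each fixed $\lambda$ the sum over cubes collapses by disjointness:
\[
\sum_j \|\nabla^n T_\Omega P_\lambda\|_{L^p(Q_j)}^p = \|\nabla^n T_\Omega P_\lambda\|_{L^p(\Omega)}^p,
\]
which is finite precisely by hypothesis (b). This recombination step is short but essential, and is the precise mechanism by which the polynomial hypothesis enters the argument.
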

The notation is explained in Section \ref{secpreli}. Note that we do not assume the kernel to be even. This result reminds us the results by Rodolfo H. Torres in \cite{torres}, where the characterization of some generalized Calder\'on-Zygmund operators which are bounded in the homogeneous Triebel-Lizorkin spaces in $\R^d$ is given in terms of its behavior over polynomials. Let us also remark that in \cite{vahakangas} Antti V. V\"ah\"akangas obtained some T1 theorem for weakly singular integral operators on domains. Roughly speaking, he showed the image of the characteristic function being in a certain BMO-type space to be equivalent to the boundedness of $T_\Omega: L^p(\Omega) \to \dot W^{m,p}(\Omega)$ where $m$ is the degree of the singularity of T's kernel.

In 2009, V\'ictor Cruz and Xavier Tolsa found a sufficient condition weaker than $\varepsilon>s$ for the validity of the corollary. Namely, they proved in \cite{ct} that if $\Omega\subset \C$ is a Lipschitz domain and its unitary outward normal vector $N$ is in the Besov space $B^{s-1/p}_{p,p}(\partial \Omega)$ (following the notation in \cite{triebel}), then one has $B(\chi_\Omega)\in W^{s,p}(\Omega)$. Furthermore, the parameterizations of the boundary are in $B^{s-1/p+1}_{p,p}(\partial \Omega) \subset C^{1+\epsilon}(\partial \Omega)$ if $sp>2$ (see \cite[Section 2.7.1]{triebel}), so one can use the result in \cite{cmo}, leading to the boundedness of the Beurling transform. Xavier Tolsa proved in \cite{tolsasharp} that this geometric condition is necessary when the Lipschitz constants of $\partial \Omega$ are small. The result in \cite{ct} can be extended to $n\geq 2$ but it is out of reach of the present article. This will be the subject of a forthcoming paper by us.

In Section \ref{seccarleson} we define the shadows $\mathbf{Sh}(x)$ and $\widetilde{\mathbf{Sh}}(x)$ for every point $x$ in a Lipschitz domain $\Omega$ close enough to $\partial\Omega$. Those shadows can be understood as Carleson boxes of the domain. We say that a positive and finite Borel measure $\mu$ is a $p$-\emph{Carleson measure} if for every $a \in \Omega$ and close enough to the boundary,
\begin{equation}\label{eqcontinucarleson}
\int_{\widetilde {\mathbf{Sh}}(a)} \dist(x, \partial\Omega)^{(d-p)(1-p')}(\mu({\mathbf{Sh}}(x)\cap {\mathbf{Sh}}(a)))^{p'} \frac{dx}{\dist(x, \partial\Omega)^d}\leq C \mu({\mathbf{Sh}}(a)).
\end{equation}
N. Arcozzi, R. Rochberg and E. Sawyer proved in \cite{ars} that in the case when $\Omega$ coincides with the unit disk $\DDD\subset\C$, the measure $\mu$ is $p$-Carleson if and only if the trace inequality
$$\int_\DDD |f|^p \, d\mu \leq C|f(0)|^p + C\int_\DDD |f'|^p \, dm$$
holds for any holomorphic function $f$ on $\DDD$. It turns out that the notion of $p$-Carleson measure is also essential for the characterization of the boundedness of Calder\'on-Zygmund operators of order $n$ in $W^{n,p}(\Omega)$ when $1<p\leq d$ as our next theorem shows.
\begin{theorem}\label{theocarleson}
Let $T$ be a smooth convolution Calder\'on-Zygmund operator of order $n$, and consider a Lipschitz domain $\Omega$ and $1<p\leq d$. If the measure $|\nabla^n T_\Omega P(x)|^p dx$ is a $p$-Carleson measure for every polynomial $P$ of degree at most $n-1$, then $T_\Omega$ is a bounded operator on $W^{n,p}(\Omega)$.
\end{theorem}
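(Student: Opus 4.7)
The plan is to follow the Whitney-decomposition strategy used for Theorem~\ref{theoTP}, replacing the pointwise Morrey control available when $p>d$ by a Carleson-embedding argument on the polynomial pieces. Let $f\in W^{n,p}(\Omega)$ and fix a Whitney decomposition $\mathcal{W}$ of $\Omega$. For each $Q\in\mathcal{W}$ let $P_Q\in\mathcal{P}_{n-1}$ be an $L^2$-projection polynomial approximation of $f$ on a slight enlargement $2Q$, so that the Poincaré-Sobolev inequality gives
\begin{equation*}
\|\nabla^k(f-P_Q)\|_{L^p(2Q)}\le C\,\ell(Q)^{n-k}\,\|\nabla^n f\|_{L^p(2Q)},\qquad 0\le k\le n-1.
\end{equation*}

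For $x\in Q$ I would decompose
\begin{equation*}
T_\Omega f(x)=T_\Omega\bigl((f-P_Q)\chi_{2Q\cap\Omega}\bigr)(x)+T_\Omega\bigl((f-P_Q)\chi_{\Omega\setminus 2Q}\bigr)(x)+T_\Omega(P_Q)(x),
\end{equation*}
viewing $P_Q$ as a globally defined polynomial in the last summand, and apply $\nabla^n$. The local term is handled by the $W^{n,p}(\R^d)$-boundedness of $T$ on a suitable extension together with the Poincaré bound above. The far-field error is controlled via the off-diagonal decay $|\nabla^n K(x-y)|\lesssim|x-y|^{-d-n}$ combined with cube-by-cube Poincaré; a Schur test plus a bounded-overlap argument then sums these contributions to $\|\nabla^n f\|_{L^p(\Omega)}^p$. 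These two pieces essentially mimic the corresponding steps in the proof of Theorem~\ref{theoTP} and rely only on $W^{n,p}$-regularity of $f$.

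The crux is the last term. Fix a basis $\{M_\beta\}_{|\beta|\le n-1}$ of $\mathcal{P}_{n-1}$ and expand $P_Q=\sum_\beta a_\beta^Q M_\beta$. A finite-dimensional $\ell^p$ inequality gives
\begin{equation*}
\sum_{Q\in\mathcal{W}}\int_Q|\nabla^n T_\Omega P_Q(x)|^p\,dx \le C\sum_\beta\sum_{Q\in\mathcal{W}}|a_\beta^Q|^p\,\mu_\beta(Q),
\end{equation*}
where $d\mu_\beta(x):=|\nabla^n T_\Omega M_\beta(x)|^p\,dx$ is a $p$-Carleson measure by hypothesis. Setting $A_\beta(x):=a_\beta^Q$ for $x\in Q$, the inner sum equals $\int_\Omega|A_\beta|^p\,d\mu_\beta$. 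The main obstacle is to convert the geometric testing condition~\eqref{eqcontinucarleson} into a genuine Carleson-embedding theorem---modelled on the Arcozzi-Rochberg-Sawyer argument on the disk, proved by passing to the dyadic tree of Whitney cubes and their shadows $\mathbf{Sh}$, $\widetilde{\mathbf{Sh}}$---that bounds $\int_\Omega|A_\beta|^p\,d\mu_\beta$ by a Sobolev-type norm of $A_\beta$. Since each $a_\beta^Q$ is essentially an average of $\partial^\beta f$ on $Q$ with $|\beta|\le n-1$, standard maximal-function and Hardy-type estimates involving the weight $\dist(\cdot,\partial\Omega)$ should then convert that quantity into $\|f\|_{W^{n,p}(\Omega)}^p$, closing the argument.
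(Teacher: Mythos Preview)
Your overall route is the paper's: the local and far-field pieces are precisely what the Key Lemma already handles, so the whole problem reduces to controlling $\sum_{Q}\|\nabla^n T_\Omega(P_Q)\|_{L^p(Q)}^p$, and you correctly identify that this requires turning the testing condition \eqref{eqcontinucarleson} into an Arcozzi--Rochberg--Sawyer embedding on the Whitney tree.

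The gap is in your last paragraph. You say that since $a_\beta^Q$ is an average of $D^\beta f$ on $Q$, ``standard maximal-function and Hardy-type estimates'' will finish. But the ARS embedding (Theorem~\ref{theoars}) does not bound $\sum_Q |A_\beta(Q)|^p\mu_\beta(Q)$ for an arbitrary function $A_\beta$; it only bounds $\sum_Q |\mathcal{I}h(Q)|^p\mu_\beta(Q)$ in terms of $\sum_Q |h(Q)|^p\rho(Q)$, i.e.\ it applies to functions that are \emph{primitives along the tree}. The missing idea is to rewrite $a_\beta^Q\approx\fint_{3Q}D^\beta f$ as such a primitive. The paper does this in two steps. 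First, a localization (Lemma~\ref{lemklemma2}) lets one assume $f$ vanishes outside a window canvas, so that $\fint_{3P}D^\beta f=0$ for $P$ near the root $Q_0$; this is what makes the following telescoping terminate. Second, one writes
\[
\fint_{3Q}D^\beta f=\sum_{P\in[Q,Q_0)}\Bigl(\fint_{3P}D^\beta f-\fint_{3\mathcal{F}(P)}D^\beta f\Bigr)
\]
and bounds each difference by Poincar\'e, obtaining $|a_\beta^Q|\lesssim \sum_{P\geq Q}\ell(P)\fint_{5P}|\nabla D^\beta f|=\mathcal{I}h(Q)$ with $h(P)=\ell(P)\fint_{5P}|\nabla D^\beta f|$. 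Now ARS applies with the weight $\rho_{\mathcal{W}}(Q)=\ell(Q)^{d-p}$, and a direct computation gives $\sum_Q h(Q)^p\ell(Q)^{d-p}\lesssim\|\nabla D^\beta f\|_{L^p(\Omega)}^p$. Without the localization the telescoping sum has no natural starting point, and without the telescoping you have no function $h$ to feed into ARS; your sketch skips both.
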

This condition is in fact necessary for $n=1$:
\begin{theorem}\label{theoTb}
Let $T$ be a smooth convolution Calder\'on-Zygmund smooth operator of order 1, and consider a Lipschitz domain $\Omega$ and $1<p<\infty$. The following statements are equivalent:
\begin{enumerate}
\item $T_\Omega$ is a bounded operator on $W^{1,p}(\Omega)$.
\item The measure $|\nabla T \chi_\Omega(x)|^p dx$ is a $p$-Carleson measure for $\Omega$.
\end{enumerate}
\end{theorem}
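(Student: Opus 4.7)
The implication $(2) \Rightarrow (1)$ is immediate from Theorem~\ref{theocarleson} with $n = 1$: the polynomials of degree at most $n - 1 = 0$ are the constants, and for a constant $c$ one has $T_\Omega c(x) = c\, T\chi_\Omega(x)$ for $x \in \Omega$, so the hypothesis of Theorem~\ref{theocarleson} for every such $c$ is equivalent, by the positive-scaling invariance of the Carleson condition, to statement $(2)$. The work is therefore the converse $(1) \Rightarrow (2)$.

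My plan for $(1) \Rightarrow (2)$ is to establish the trace inequality
\begin{equation*}
\int_\Omega |f(x)|^p \, |\nabla T\chi_\Omega(x)|^p \, dx \leq C \, \norm{f}_{W^{1,p}(\Omega)}^p, \qquad f \in W^{1,p}(\Omega), \tag{$\star$}
\end{equation*}
and to deduce the Carleson property of $|\nabla T\chi_\Omega|^p \, dx$ from $(\star)$ by invoking the characterization of $p$-Carleson measures through such a trace inequality---the natural extension to Lipschitz domains in $\R^d$ of the Arcozzi--Rochberg--Sawyer theorem recalled in Section~\ref{seccarleson}. To prove $(\star)$, I would start from the pointwise commutator decomposition
\begin{equation*}
T_\Omega f(x) = f(x)\, T\chi_\Omega(x) + \int_\Omega K(x-y)\bigl(f(y)-f(x)\bigr) \, dy, \qquad x \in \Omega,
\end{equation*}
differentiate in $x_i$, and then use $\partial_{x_i} K = -\partial_{y_i} K$ together with the divergence theorem (after the usual regularization near $y = x$) to transfer one derivative onto $f$ in the $y$-variable. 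Up to a harmless bounded pointwise multiplier coming from the principal-value limit on the diagonal, the resulting identity reads
\begin{equation*}
f(x)\, \nabla T\chi_\Omega(x) = \nabla T_\Omega f(x) - T(\chi_\Omega \nabla f)(x) + \mathcal{B} f(x), \qquad x \in \Omega,
\end{equation*}
where $\mathcal{B} f(x) = \int_{\partial\Omega} K(x-y)\bigl(f(y)-f(x)\bigr)\nu(y)\, d\sigma(y)$ and $\nu$ denotes the outward unit normal.

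Upon taking the $L^p(\Omega)$ norm, the first term on the right is controlled by $C\norm{f}_{W^{1,p}(\Omega)}$ by assumption $(1)$ and the second by $C\norm{\nabla f}_{L^p(\Omega)}$ via the standard $L^p(\R^d)$-boundedness of the smooth convolution Calder\'on--Zygmund operator $T$; the correction multiplier likewise contributes at most $C\norm{\nabla f}_{L^p(\Omega)}$. The delicate step, which I expect to be the main obstacle of the proof, is the boundary-layer estimate $\norm{\mathcal{B} f}_{L^p(\Omega)} \leq C\norm{f}_{W^{1,p}(\Omega)}$. My approach here is to split the integral defining $\mathcal{B} f(x)$ according to whether $y \in \partial\Omega$ is close to or far from the projection of $x$ onto $\partial\Omega$: the far part reduces to routine size estimates for $K$, while the near part should be handled by combining the sharp trace inequality $\norm{f|_{\partial\Omega}}_{B^{1-1/p}_{p,p}(\partial\Omega)} \leq C\norm{f}_{W^{1,p}(\Omega)}$, the mapping properties of single-layer potentials on a Lipschitz surface, and the cancellation built into $f(y)-f(x)$, in such a way that the resulting bound is uniform in $\dist(x,\partial\Omega)$. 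Once $(\star)$ is in hand, the ARS-type trace characterization in Section~\ref{seccarleson} closes the argument.
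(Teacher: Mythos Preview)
Your reduction of $(2)\Rightarrow(1)$ to Theorem~\ref{theocarleson} is correct. The plan for $(1)\Rightarrow(2)$, however, has two genuine gaps.

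\textbf{The step $(\star)\Rightarrow$ Carleson is not available.} Section~\ref{seccarleson} contains no characterization of $p$-Carleson measures through a trace inequality for \emph{arbitrary} $W^{1,p}(\Omega)$ functions; Theorem~\ref{theoars} is a statement about the discrete tree operator $\mathcal{I}$, and the ARS remark about the disk concerns \emph{holomorphic} test functions only. In fact $(\star)$ alone cannot characterize the Carleson condition: when $p>d$ the Sobolev embedding $W^{1,p}(\Omega)\hookrightarrow L^\infty(\Omega)$ makes $(\star)$ hold for \emph{every} finite measure, yet not every finite measure is $p$-Carleson. The paper's actual bridge is Proposition~\ref{propomesuradecarleson}: from the Key Lemma one first extracts boundedness of the Whitney averaging operator $\mathcal{A}:W^{1,p}\to L^p(\mu)$, and then a duality argument---testing $\mathcal{A}^*$ on $\chi_{\mathbf{Sh}(P)}$ and solving an auxiliary Neumann problem on the half-space (Lemma~\ref{lemNeumann})---produces the inequality~\rf{eqcarlesoncondition}. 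This is the substantive step in the proof and is absent from your plan.

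\textbf{The boundary-layer estimate is circular.} By the divergence theorem (modulo the diagonal correction) $\int_{\partial\Omega}K(x-y)\nu(y)\,d\sigma(y)=-\nabla T\chi_\Omega(x)$, so
\[
\mathcal{B}f(x)=\int_{\partial\Omega}K(x-y)f(y)\nu(y)\,d\sigma(y)\;+\;f(x)\,\nabla T\chi_\Omega(x).
\]
Hence bounding $\|\mathcal{B}f\|_{L^p(\Omega)}$ already presupposes control of the very quantity $f\,\nabla T\chi_\Omega$ that $(\star)$ asserts. Your commutator identity, read correctly, shows that under hypothesis~(1) the three statements ``$(\star)$ holds'', ``$\mathcal{B}f$ is bounded'', and ``the single layer $\int_{\partial\Omega}K(x-y)f(y)\nu(y)\,d\sigma(y)$ is bounded in $L^p(\Omega)$'' are mutually equivalent; the trace regularity and single-layer mapping properties you invoke do not isolate any one of them. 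The paper avoids this circle entirely by working with the discretized averaging operator rather than pointwise commutators.
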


\begin{example}
Those theorems can be used to prove the boundedness of $B_\DDD$ in $W^{n,p}(\DDD)$ for any $n\in \N$ and $p>1$ in one stroke. Indeed, given any multiindex $\lambda =(\lambda_1, \lambda_2)$, consider $P_\lambda(z)=z^\lambda=z^{\lambda_1}\overline{z}^{\lambda_2} $. In \cite[page 96]{aim} the authors find a function $f\in W^{1,p}(\C)$ for $p$ big such that $\bar \partial f= \chi_\DDD$ and then using that $B(\bar\partial f)=\partial f$ they deduce who is $B\chi_\DDD$. Using the same procedure, one can see that
\begin{itemize}
\item if $\lambda_1=0$, then $B_\DDD(P_\lambda)(z)= C_\lambda z^{-\lambda_2-2}\chi_{\DDD^c}(z)$,
\item if $0<\lambda_1<\lambda_2+1$, then $B_\DDD(P_\lambda)(z)=C_\lambda^1 z^{\lambda+(-1,1)}\chi_\DDD(z) + C_\lambda^2 z^{\lambda_1-\lambda_2-2}\chi_{\DDD^c}(z)$,
\item if $\lambda_1=\lambda_2+1$, then $B_\DDD(P_\lambda)(z)=C_\lambda z^{\lambda+(-1,1)}\chi_\DDD(z)$,
\item if $\lambda_1>\lambda_2+1$, then $B_\DDD(P_\lambda)(z)=\left(C_\lambda^1 z^{\lambda+(-1,1)} + C_\lambda^2 z^{\lambda_1-\lambda_2-2}\right)\chi_\DDD(z)$,
\end{itemize}
with constants depending only on $\lambda$. Summing up, for any polynomial $P$ of degree $n-1$, its transform $B_\DDD P$ agrees with a polynomial of degree smaller or equal than $n-1$ in $\DDD$ so $\nabla^n B_\DDD P_\lambda (z)= 0$ for $z\in \DDD$. Thus, the sufficient conditions of Theorems \ref{theoTP} and \ref{theocarleson} are satisfied.
\end{example}

\begin{example}
For a negative example, consider a square $Q$ in the complex plane with a corner at $\omega$. In that case, one can see that $B(\chi_Q)(z)$ is expressed as a sum of logarithms \cite[(4.122)]{aim}. Since $|\partial B(\chi_Q)(z)|\approx |z-\omega|^{-1}$ when $z$ is close enough to $\omega$, it follows that $B(\chi_Q) \notin W^{1,p}(Q)$ for $p\geq 2$ and, thus, $B_Q$ is not bounded in $W^{1,p}(Q)$ for $p\geq2$. By the same token,  for $n\geq2$ one has $|\partial^n B(\chi_Q)(z)|\approx |z-\omega|^{-n}$ and therefore $B_Q$ is not bounded in $W^{n,p}(Q)$ for any $p>1$. However, since  $B(\chi_Q)$ is analytic, one can see with some effort that when $n=1$ and $p<2$, then $\mu(z)=|\nabla B \chi_Q(z)|^p$ is a $p$-Carleson measure. Using Theorem \ref{theocarleson}, this leads to the boundedness of $B_Q$ in $W^{1,p}(Q)$ for $1<p<2$.

The question arises whether is there any Lipschitz domain $\Omega$ such that $B_\Omega$ is not bounded in $W^{1,p}(\Omega)$ for $p<2$. We refer the reader to \cite{tolsasharp} to find the tools to answer this question in the affirmative.
\end{example}

The plan of the paper is the following. In Section \ref{secpreli} we begin by stating some remarks and definitions and then we cite some results that we will use. In Section \ref{secwhitney} we define an oriented Whitney covering and we discuss about its properties. To end with the preliminaries, we present some approximating polynomials for a given function $f\in W^{n,p}(\Omega)$ in Section \ref{secpoly}. These polynomials will be the cornerstone of the proof of Theorems \ref{theoTP} and \ref{theocarleson}. Before we prove these theorems, we devote the rather technical Section \ref{secderivatives} to show the existence of weak derivatives of $T_\Omega f$ in $\Omega$ as long as $f\in W^{n,p}(\Omega)$. The expert reader may skip it.  In Section \ref{seclemma} we prove a Key Lemma which is the first step toward the proofs of Theorems \ref{theoTP}, \ref{theocarleson} and  \ref{theoTb}. Afterwards we prove Theorem \ref{theoTP} in Section \ref{secmain}, Theorem \ref{theocarleson} in Section \ref{seccarleson} and Theorem \ref{theoTb} in Section \ref{secmain2}. Finally, in Section \ref{secfinalremarks} we sketch an alternative argument for Theorem \ref{theoTb} in the planar case using complex analysis.

\section{Notation and well-known facts}\label{secpreli}
Along this paper $m$ stands for the Lebesgue measure and $\mathcal{H}^k$ for the $k$-th dimensional Hausdorff measure. We write $dx$ for $dm(x)$ when integrating on subsets of $\R^d$ with respect to the Lebesgue measure if there is no risk of confusion.

We call $\mathcal{P}^n$ the vector space of polynomials of degree smaller or equal than $n$ (in $\R^d$).

The polynomials and derivatives will be written with the multiindex notation. For every multiindex $\alpha\in\N^d$ (where we assume the natural numbers to include the $0$), $\alpha=(\alpha_1,\cdots,\alpha_d)$, we define its modulus as $|\alpha|=\sum_{j=1}^d \alpha_j$ and its factorial $\alpha!:=\prod_{j=1}^d \alpha_j!$, leading to the usual definitions of combinatorial numbers. For two multiindices $\alpha, \beta\in \N^d$ we write $\alpha\leq\beta$ whenever $\alpha_i\leq \beta_i$ for $1\leq i \leq d$, and we write $\alpha<\beta$ if $\alpha\leq\beta$ and $\alpha\neq\beta$. For $x\in \R^d$ let $x^\alpha:=\prod_{j=1}^d x^{\alpha_j}_j$ and for $\phi \in C^\infty_c$ (infinitely many times differentiable with compact support), let $D^\alpha \phi := \frac{\partial^{|\alpha|}}{\partial x_1^{\alpha_1}\cdots \partial x_d^{\alpha_d}}\phi$.

In general, for any open set $U$, and every distribution $f \in \mathcal{D}'(U)$, the $\alpha$ \emph{distributional derivative} of $f$ is defined by
$$ \langle D^\alpha f , \phi \rangle : =(-1)^{|\alpha|}\langle f, D^\alpha \phi \rangle \mbox{\,\,\, for every } \phi\in C^\infty_c(U).$$
If the distribution is regular, that is $D^\alpha f \in L^1_{loc}$, we say it is a \emph{weak derivative} in $U$. We write $|\nabla^n f|=\sum_{|\alpha|=n}|D^\alpha f|$.

We say that $f\in L^p(U)$ is in the \emph{Sobolev space} $W^{n,p}(U)$ if it has weak derivatives up to order $n$ and $D^\alpha f \in L^p(U)$ for $|\alpha|\leq n$. We say that $f\in W^{n,p}_{loc}(U)$ if those derivatives are in the space $L^p_{loc}(U)$ instead. 
We will use the norm 
$$\norm{f}_{W^{n,p}(U)} = \sum_{|\alpha| \leq n}\norm{D^\alpha f}_{L^p(U)}.$$
For Lipschitz domains, it is enough to consider the higher order derivatives and the function itself, 
$$\norm{f}_{W^{n,p}(U)} \approx \norm{f}_{L^p(U)}+\norm{\nabla^n f}_{L^p(U)}$$
(see \cite[4.2.4]{triebel}).

\begin{definition}\label{defCZK}
We say that a measurable function $K \in W^{n,1}_{loc}(\R^d \setminus \{0\})$ is a \emph{smooth convolution Calder\'on-Zygmund kernel of order $n$} if 
\begin{equation*}
|\nabla^jK(x)|\leq \frac{ C_K}{|x|^{d+j}} \mbox{\,\,\,\, for $x\neq 0$ and $0\leq j \leq n$},
\end{equation*}
for a positive constant $C_K$ and that kernel can be extended to a tempered distribution $W_K$ in $\R^d$ in the sense that for every Schwartz function $\phi \in \mathcal{S}$ with $0\notin \supp(\phi)$,  one has
$$\langle W_K, \phi \rangle=(K*\phi)(0).$$
 \end{definition}
%

We will use the classical notation $\widehat f$ for the Fourier transform of a given Schwartz function,
$$\widehat f (\xi)=\int_{\R^d} e^{-2\pi i x\cdot \xi} f(x) dx,$$ 
and $\widecheck f$ will denote its inverse.
It is well known that the Fourier transform can be extended to the whole space of tempered distributions by duality and it induces an isometry in $L^2$ (see for example \cite[Chapter 2]{grafakos}).

\begin{definition}\label{defCZO}
We say that an operator $T:\mathcal{S} \to \mathcal{S'}$ is a \emph{smooth convolution Calder\'on-Zygmund operator of order $n$} with kernel $K$ if $K$ is a smooth convolution Calder\'on-Zygmund kernel of order $n$ such that $\widehat {W_K} \in L^1_{loc}$, $T$ is defined as
$$T\phi=W_K*\phi := \left(\widehat {W_K} \cdot \widehat \phi\right)\widecheck{\,}$$
for every $\phi \in \mathcal{S}$, and $T$ extends to an operator bounded in $L^p$ for every $1<p<\infty$.
 \end{definition}

One can see using the results in \cite[Chapter IV]{steinpetit} and \cite[Chapter 4]{grafakos}, for instance, that this boundedness property is equivalent to having $\widehat {W_K} \in L^\infty$. 

It is a well-known fact that the Schwartz class is dense in $L^p$ for $p<\infty$. Thus, if $f\in L^p$ and $x\notin \supp(f)$, then
$$Tf(x) = \int K(x-y)f(y) dy.$$

\begin{example}
In the complex plane, the Beurling transform \rf{eqbeurling} is a smooth convolution Calde-r\'on-Zygmund operator of any order associated to the kernel $K(z)=-\frac1{\pi \, z^2}$ and its multiplier is $\widehat{W_K}(\xi)=\frac{\bar\xi}{\xi}$.
Thus, the Beurling transform is an isometry in $L^2$.
\end{example}

For any cube $Q$ we write $\ell(Q)$ for its side-length. Given $r\in \R$ we write $rQ$ for the cube concentric with $Q$  and side length $r \ell(Q)$.

\begin{definition}\label{defLipschitz}
Let $\Omega\subset \R^d$ be a domain (open and connected). We say that a cube $\mathcal{Q}$ with side-length $R>0$ and center $x\in\partial\Omega$ is an \emph{$R$-window} of the domain if it induces a local parameterization of the boundary, i.e. there exists a continuous function $A_\mathcal{Q}:\R^{d-1}\to\R$ such that, after a suitable rotation that puts all the faces of $\mathcal{Q}$ parallel to the coordinate axes, 
$$\Omega \cap 2\mathcal{Q}=\{(y',y_d)\in (\R^{d-1}\times \R)\cap2\mathcal{Q} : y_d > A_\mathcal{Q}(y')\}$$
(we use the double cube $2\mathcal{Q}$ in order to ensure that the central point of the upper face of $\mathcal{Q}$ is far from the boundary of $\Omega$).

 We say that a bounded domain $\Omega$ is a \emph{$(\delta,R)$-Lipschitz domain} if for each $x\in\partial \Omega$ there exists an $R$-window $\mathcal{Q}$ centered in $x$ with $A_\mathcal{Q}$ Lipschitz with a uniform bound $\norm{\nabla A_\mathcal{Q}}_\infty<\delta$.
 
 We say that an unbounded domain $\Omega$ is a \emph{special $\delta$-Lipschitz domain} if there exists a Lipschitz function $A$ such that
 $\norm{\nabla A}_\infty<\delta$ and
 $$\Omega=\{(y',y_d)\in \R^{d-1}\times \R : y_d > A(y')\}.$$
 \end{definition}

With no risk of confusion, we will forget often about the parameters $\delta$ and $R$ and we will talk in general of Lipschitz domains and windows without further explanations.

In Section \ref{secmain2} we will solve a Neumann problem by means of the Newton potential: given an integrable  function with compact support $g\in L^1_0(\R^d)$, its Newton potential is
\begin{equation}\label{eqdefpotencialnewton}
N {g} (x)= \int \frac{|x-y|^{2-d}}{(2-d)w_d} {g}(y)\,dy \mbox{\quad if $d> 2$, \,\,  \, \,\,} N {g}(x)= \int \frac{\log{|x-y|}}{2\pi} {g} (y)\,dy \mbox{\quad if d=2,} 
\end{equation}
where $w_d$ stands for the surface measure of the unit sphere in $\R^d$. Recall that the gradient of $N {g}$ is the $(d-1)$-dimensional Riesz transform of ${g}$, 
\begin{equation*}
\nabla N {g}(x)= R^{(d-1)}{g}(x)= \int \frac{x-z}{w_d |x-z|^{d}} {g}(z) \, dz. 
\end{equation*}
It is well known that $\Delta N {g}(x)= {g}(x)$ for $x\in \R^d$ (see \cite[Theorem 2.21]{folland} for instance).

We recall now two results that we will use every now and then. The first is the Leibnitz' Formula, which states that for $f\in W^{n,p}(\Omega)$ and $|\alpha|\leq n$, if $\phi\in C^\infty_c(\Omega)$, then $f \cdot \phi \in W^{n,p}(\Omega)$ and
\begin{equation}\label{eqleibnitz}
D^\alpha (f\cdot \phi)=\sum_{\beta\leq\alpha} \binom{\alpha}{\beta} D^\beta \phi \, D^{\alpha-\beta} f 
\end{equation} 
(see, for example, \cite[5.2.3]{evans}).

The second is the Sobolev Embedding Theorem for Lipschitz domains (see \cite[Theorem 4.12, Part II]{adams}), which says in particular that for each Lipschitz domain $\Omega$ and every $p>d$, we have the continuous embedding 
of the Sobolev space $W^{1,p}(\Omega)$ into the H\"older space $C^{0,1-\frac{d}{p}}(\overline{\Omega})$. Recall that 
$$\norm{f}_{C^{0,s}(\overline{\Omega})}=\norm{f}_{L^\infty(\overline\Omega)}+\sup_{\substack{x,y\in \overline{\Omega}\\x \neq y}} \frac{|f(x)-f(y)|}{|x-y|^s}.$$

\section{Oriented Whitney covering}\label{secwhitney}
Along this section we consider $\Omega$ to be a fixed $(\delta,R)$-Lipschitz domain. We also consider a given dyadic grid of semi-open cubes in $\R^d$.

\begin{definition}\label{defwhitney}
 We say that a collection of cubes $\mathcal{W}$ is a \emph{Whitney covering} of $\Omega$ if
\begin{enumerate}[W1.]
\item The cubes in $\mathcal{W}$ are dyadic.
\item\label{itemWd} The cubes have pairwise disjoint interiors.
\item\label{itemWCovers} The union of the cubes in $\mathcal{W}$ is $\Omega$.
\item \label{itemboundeddistance} There exists a constant $C_\mathcal{W}$ such that $C_\mathcal{W} \ell(Q) \leq \dist(Q, \partial\Omega)\leq 4C_\mathcal{W} \ell(Q)$.
\item\label{itemWneighbor} Two neighbor cubes $Q$ and $R$ (i.e. $\bar Q \cap \bar R \neq \emptyset$, $Q\neq R$) satisfy $\ell(Q)\leq 2\ell(R)$.
\item\label{itemWfinitesuper} The family $\{10Q\}_{Q\in\mathcal{W}}$ has finite superposition, that is $\sum_{Q\in\mathcal{W}} \chi_{10Q}\leq C$.
\end{enumerate}
\end{definition}

We do not prove here the existence of such a covering because this kind of covering is well known and widely used in the literature.

Recall that we say that $\mathcal{Q}$  is an  $R$-window of $\Omega$ if it is a cube centered in $\partial \Omega$, with side-length $R$ inducing a Lipschitz parameterization of the boundary (see Definition \ref{defLipschitz}). 
We can choose a number $N \approx \mathcal{H}^{d-1}(\partial \Omega)/R^{d-1}$ and a collection of windows $\{\mathcal{Q}_k\}_{k=1}^N$ such that 
\begin{equation}\label{eqwindowscontain}
\partial \Omega \subset \bigcup_{k=1}^N \delta_1 \mathcal{Q}_k,
\end{equation}
where ${\delta_1}<\frac14$ is a value to fix later (in Remark \ref{remabove}). 

Each window $\mathcal{Q}_k$ is associated to a parameterization $A_k$ in the sense that, after a rotation,
 $$\Omega \cap 2 \mathcal{Q}_k=\{(y',y_d)\in (\R^{d-1}\times \R)\cap 2\mathcal{Q}_k : y_d > A_k(y')\}.$$
Thus, each $\mathcal{Q}_k$ induces a \emph{vertical} direction, given by the eventually rotated $y_d$ axis. 
The following is an easy consequence of the previous statements and the fact that the domain is Lipschitz:
\begin{enumerate}[W1.]
\setcounter{enumi}{6}
\item\label{itemWvertical} The number of Whitney cubes in $\mathcal{Q}_k$ with the same side-length intersecting a given vertical line is bounded by a constant depending only on the Lipschitz character of $\Omega$, where the ``vertical'' direction is the one induced by the window.
\end{enumerate}

This is the last property of the Whitney cubes we want to point out. Next we define paths connecting Whitney cubes. First, we use that the notion of vertical direction allows us to say that one cube is above another one even if the faces of the Whitney cubes are not parallel to the faces of $\mathcal{Q}_k$.
\begin{definition}\label{defabove}
We say that a cube $S$ is \emph{above} $Q$ with respect to $\mathcal{Q}_k$ if $Q,S\subset \mathcal{Q}_k$, there is a line parallel to the vertical direction induced by $\mathcal{Q}_k$ intersecting the interior of both cubes and there exists a point $x\in S$ such that for every $y\in Q$, $x_d>y_d$ in local coordinates. 
\end{definition}

We distinguish the cubes in the central region from those which are close to the boundary of the domain.
\begin{definition}
We say that $Q$ is \emph{central} if $\sup_{x\in Q}\dist(x,\partial\Omega)> \delta_2 R$, where $\delta_2<\frac12$ is a constant to fix in Remark \ref{remabove}. We denote this subcollection of cubes by  $\mathcal{W}_0$.

We say that $Q$ is \emph{peripheral} if it is not central.
 \end{definition}

\begin{rem}\label{remabove}
Consider $\delta_0<\frac12$ to be fixed. We call  ${\delta_0} \mathcal{Q}_k \cap \Omega$ the {\em canvas} of the window $\mathcal{Q}$, and we divide the peripheral cubes in collections $\mathcal{W}_k=\{Q\in \mathcal{W} \setminus \mathcal{W}_0 : Q \subset {\delta_0} \mathcal{Q}_k \cap \Omega\}$. For Whitney constants big enough and for $\delta_0$,  $\delta_1$ and $\delta_2$ small enough we have that 
\begin{enumerate}[1)]
\item The union of central cubes is a connected set.
\item Every peripheral cube is contained in a window canvas. The subcollections $\mathcal{W}_k$ are not disjoint and, if two peripheral cubes $Q$ and $S$ are not contained in any common $\mathcal{W}_k$, then $\dist(Q,S)\approx R$.
\item For each peripheral cube $Q\in\mathcal{W}_k$ there exists a cube $S\subset  \mathcal{Q}_k$ above $Q$ which is central.
\end{enumerate}
Furthermore, 
\begin{enumerate}[1)]
   \setcounter{enumi}{3}
\item All the central cubes have comparable side-length.
\end{enumerate}
\end{rem}

 Next we provide a tree-like structure to the family of cubes. 
\begin{definition}\label{defgeneralchain}
We say that $C=(Q_1,Q_2,\cdots, Q_{M})$ is a \emph{chain} connecting $Q_1$ and $Q_M$ if $Q_i$ and $Q_{i+1}$ are neighbors for every $i<M$. We will call the \emph{next} cube to $\mathcal{N}_{C}(Q_{i})=Q_{i+1}$. In general, we consider the iteration $\mathcal{N}_{C}^j(Q_{i})=Q_{i+j}$ whenever $i+j\leq M$. 
\end{definition}

We want to have a somewhat rigid structure to gain some control on the chains we use, so we need to introduce a \emph{chain function} $[\cdot,\cdot]:\mathcal{W}\times \mathcal{W}\to \bigcup_M\mathcal{W}^M$. We state three rules. The first one is on the definition of chain function.

\vspace{3mm}
\textbf{First rule:}
\begin{enumerate}[{1.}1:]
\item For any cubes $Q,S\in\mathcal{W}$, $[Q,S]$ is a chain connecting $Q$ and $S$.
\end{enumerate}

Abusing notation we will also write $[Q,S]$ for the non-ordered collection $\{Q_i\}_{i=1}^{M}$ so that we can say that $Q_i \in [Q,S]$. 
  
  Given two cubes $Q, S$, we will use the open-close interval notation $(Q,S):=[Q,S]\setminus \{Q,S\}$,  $[Q,S):=[Q,S]\setminus \{S\}$,  $(Q,S]:=[Q,S]\setminus \{Q\}$.

\vspace{6 em}

Now we can state the second rule, concerning the central cubes. For that purpose, assume that we have fixed a central cube $Q_0$.

\vspace{3mm}
\textbf{Second rule:}
\begin{enumerate}[2.1]
\item For every central cube $Q\in \mathcal{W}_0$, $[Q, Q_0]$ is a chain of central cubes connecting these two cubes with minimal number of steps.

\item For any central cubes $Q,S\in \mathcal{W}_0$ with $S\in [Q,Q_0]$, we have $[S,Q_0]\subset [Q,Q_0]$. Thus, we can define $[Q,S]=[Q,Q_0]\setminus(S,Q_0]$ (see Figure \ref{fig2.2}).

\begin{figure}[ht]
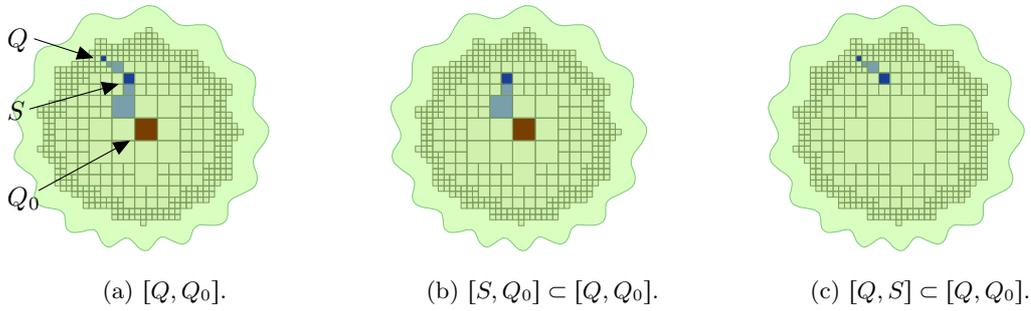

\caption{Second rule, 2.2.}\label{fig2.2}
\centering 
\begin{subfigure}[b]{0.3\textwidth}

\caption{$[Q,S]$.}
\end{subfigure}
\end{figure}

Note that $S_Q$ may be different from $\tilde{S}_Q$. Abusing notation we will always write $S_Q$. This completes the central structure. For every Whitney cube $Q\subset \delta_0\mathcal{Q}_k$, we define $[Q,Q_0]_k$ as a chain connecting $Q$ and $Q_0$ and such that each cube $S\in [Q,Q_0]_k$ is either central or above $Q$ with respect to $\mathcal{Q}_k$, and in case $S$ is central, then $[Q,Q_0]_k=[Q,S]_k\cup[S,Q_0]$, where $[Q,S]_k$ is the subchain of $[Q,Q_0]_k$ limited by $Q$ and $S$ (see Figure \ref{figqk}). The chain $[Q,Q_0]_k$ exists in virtue of Remark \ref{remabove}.

\vspace{9 em}
 
\begin{figure}[ht]
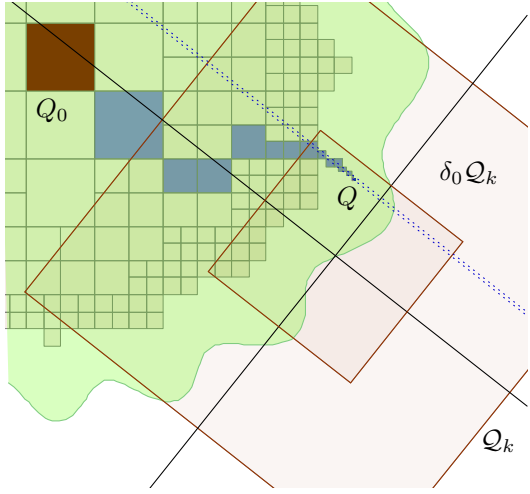

\center

\caption{$[Q,Q_0]_k$ for $Q\subset {\delta_0}\mathcal{Q}_k$.}\label{figqk}
\end{figure}
 Now we can add the rule for peripheral cubes.

\vspace{3mm}
\textbf{Third rule:}
 \begin{enumerate}[3.1:]
\item \label{itemrulechain} Given two diferent peripheral cubes which are both contained in, at least, one common window canvas $Q,S\in \mathcal{W}_k$, fix $k$ and use $[,]_k$: Define $Q_S\in [Q,Q_0]_k$, $S_Q\in [S,Q_0]_k$ and $[Q,S]=[Q,Q_S]_k\cup [S_Q,S]_k$ as in rule 2.3.
\item For every peripheral
 cube $S$, fix any $k$ such that $S \in \mathcal{W}_k$ and define $[S,Q_0]:=[S,Q_0]_k$.
\item Given two diferent cubes $Q$ and $S$ in any situation different from 3.1, use rule 2.3.
\end{enumerate}

\begin{definition}\label{defOrientedWhitney}
Given a Lipschitz domain $\Omega$, we say that $\{\mathcal{W}, \{\mathcal{Q}_k\}_{k=1}^N, Q_0, [\cdot,\cdot]\}$ is an \emph{oriented Whitney covering} of  $\Omega$ if $\mathcal{W}$ is a Whitney covering of $\Omega$ (see Definition \ref{defwhitney}), $\mathcal{Q}_k$ are windows satisfying \rf{eqwindowscontain}, the cube $Q_0 \in \mathcal{W}$ is a central cube of $\Omega$ with respect to those windows and $[\cdot,\cdot]$ is a chain function satisfying the three rules explained before. All the constants are fixed in Remark \ref{remabove}.

We say that the covering is \emph{properly oriented} with respect to a window $\mathcal{Q}_k$ if the cubes in the Whitney covering have sides parallel to the faces of $\mathcal{Q}_k$.\end{definition}

\begin{definition}
If $Q, S\in [P,Q_0]$ for some $P$ and $\mathcal{N}^j_{[P,Q_0]}(Q)=S$ for some $j\geq 0$, then we say that ${Q\leq S}$. We will say that $Q<S$ if $Q\leq S$ and $Q\neq S$. 
\end{definition}
\begin{rem}\label{remvertical}
If the covering is properly oriented with respect to $\mathcal{Q}_k$ and $Q, S \in \mathcal{W}_k$, then $Q\leq S$ if and only if $S\in[Q,Q_0]$. Otherwise, $Q\leq S$ does not imply that $S\in [Q,Q_0]$, but if $Q$ and $S$ are peripheral it implies that their vertical projections in some window have non-empty intersection.
\end{rem}

\begin{definition}
Given two cubes $Q$ and $S$ of an oriented Whitney covering, we define the \emph{long distance}
$$\Dist(Q,S)=\ell(Q)+\ell(S)+\dist(Q,S).$$
\end{definition}

\begin{rem}\label{remDist}
Using the properties of the Whitney covering, Remark \ref{remabove} and the chain function rules 2.3, 3.1 and 3.3, one can prove that,
 for $P \in [Q,Q_S]$, 
$$\Dist(P,S)\approx \Dist(Q,S)$$
and
$$\Dist(P,Q)\approx \ell(P).$$

\end{rem}

Now we consider the \emph{Hardy-Littlewood maximal operator},
$$Mg(x)= \sup_{Q \owns x} \fint_Q g(y) dy.$$
It is a well known fact that this operator is bounded in $L^p$ for $1<p\leq \infty$.
\begin{lemma}\label{lemmaximal}
Assume that $g\in L^1_{loc}$ and $r>0$. For every $Q\in\mathcal{W}$, we have
\begin{enumerate}[1)]
\item If $\eta>0$, 
	$$ \sum_{S:\Dist(Q,S)>r}  \frac{\int_S g(x) \, dx}{D(Q,S)^{d+\eta}}\lesssim \frac{\inf_{y\in Q} Mg(y)}{r ^\eta}.$$
\item If $\eta>0$, 
	$$ \sum_{S:\Dist(Q,S)<r}  \frac{\int_S g(x) \, dx}{D(Q,S)^{d-\eta}}\lesssim \inf_{y\in Q} Mg(y) \,r^\eta.$$
\item In particular, 
	$$ \sum_{S: S<Q} \int_S g(x) \, dx\lesssim \inf_{y\in Q} Mg(y) \, \ell(Q)^d.$$
\end{enumerate}
\end{lemma}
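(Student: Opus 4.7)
\medskip

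\noindent\textbf{Proof proposal.} All three estimates rest on the same basic mechanism: group the cubes $S$ in the sum according to the dyadic scale of $\Dist(Q,S)$, use the finite superposition property W6 to control the total mass at each scale by an integral over a ball, and finally cover that ball by a larger cube over which the average of $g$ is dominated by $Mg(y)$ for any $y\in Q$.

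For part (1), fix any $y_0 \in Q$ and for $j\geq 0$ define the annular family $\mathcal{A}_j=\{S\in\mathcal{W}: 2^j r\leq \Dist(Q,S)<2^{j+1}r\}$. Since $\ell(S)\leq \Dist(Q,S)$ and $\dist(Q,S)\leq \Dist(Q,S)$, every $S\in \mathcal{A}_j$ is contained in the ball $B_j:=B(y_0,C2^j r)$ for a dimensional constant $C$. Using W6, I have $\sum_{S\in\mathcal{A}_j}\int_S g\, dx \lesssim \int_{B_j}g\,dx \lesssim (2^j r)^d\, Mg(y_0)$, because $B_j$ is contained in a cube centered at a point of $Q$ of side comparable to $2^jr$ and containing $Q$. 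Summing in $j$,
\[
\sum_{S:\Dist(Q,S)>r}\frac{\int_S g\,dx}{\Dist(Q,S)^{d+\eta}}\lesssim \sum_{j\geq 0}\frac{(2^jr)^d Mg(y_0)}{(2^jr)^{d+\eta}}=\frac{Mg(y_0)}{r^\eta}\sum_{j\geq 0}2^{-j\eta}\lesssim \frac{Mg(y_0)}{r^\eta}.
\]
Taking the infimum over $y_0\in Q$ gives (1).

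For part (2), I use the same dyadic partition but downward: set $\mathcal{A}_j=\{S: 2^{-j-1}r\leq \Dist(Q,S)<2^{-j}r\}$ for $j\geq 0$. The sum is automatically finite because $\Dist(Q,S)\geq \ell(Q)$ forces only finitely many nonempty $\mathcal{A}_j$. Exactly as in (1),
\[
\sum_{S\in\mathcal{A}_j}\frac{\int_S g\,dx}{\Dist(Q,S)^{d-\eta}}\lesssim \frac{(2^{-j}r)^d\, Mg(y_0)}{(2^{-j}r)^{d-\eta}}=(2^{-j}r)^\eta Mg(y_0),
\]
and summing a geometric series in $j$ yields the bound $\lesssim r^\eta Mg(y_0)$, proving (2).

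For part (3), the geometric content of the chain-function rules is what is needed. If $S<Q$ then $Q\in[S,Q_0]$, and by the second and third rules together with Remark \ref{remabove} (all central cubes are of comparable size, peripheral chains go ``upwards'' toward the central region), one has $\ell(S)\lesssim \ell(Q)$. Moreover, Remark \ref{remvertical} implies that the vertical projections in a window of all such $S$ are contained, up to a bounded dilation, in the vertical projection of $Q$. Using W7 to bound the number of cubes of a fixed scale $2^{-k}\ell(Q)$ in this vertical column by $\lesssim 2^{k(d-1)}$, I obtain at scale $2^{-k}\ell(Q)$ a total volume bounded by $2^{-k}\ell(Q)^d$; summing in $k\geq 0$ gives $\sum_{S<Q}|S|\lesssim \ell(Q)^d$, and $\bigcup_{S<Q}S$ is contained in a ball of radius $\sim \ell(Q)$ centered at $Q$. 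Therefore $\sum_{S<Q}\int_S g\,dx\leq \int_{B(y_0,C\ell(Q))}g\,dx\lesssim \ell(Q)^d Mg(y_0)$, which is (3). The main obstacle I anticipate is making precise the geometric claim that $\bigcup_{S<Q}S$ lies inside a ball of radius comparable to $\ell(Q)$ rather than something larger; this will require unpacking the chain function rules carefully, but the heart of the matter is already encoded in Remarks \ref{remabove} and \ref{remvertical} and property W7.
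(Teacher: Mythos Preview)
Your arguments for (1) and (2) are correct and are exactly the dyadic-annuli mechanism the paper uses; the paper phrases it as bounding the sum by the integral $\int \frac{g(x)\,dx}{(|x-y|+r)^{d+\eta}}$ and then splitting into annuli, but this is the same computation. One minor remark: you invoke W\ref{itemWfinitesuper} to pass from $\sum_{S\in\mathcal{A}_j}\int_S g$ to $\int_{B_j} g$, but pairwise disjointness W\ref{itemWd} already suffices.

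For (3) your direct argument is correct, but the paper takes a shorter route: once you know that $S<Q$ forces $\Dist(Q,S)\lesssim \ell(Q)$, part (3) is literally a special case of (2) with $\eta=d$ and $r\approx\ell(Q)$, since then $\Dist(Q,S)^{d-\eta}=1$. Your volume count $\sum_{S<Q}|S|\lesssim\ell(Q)^d$ via W\ref{itemWvertical} is therefore redundant---the only thing you need is the containment $\bigcup_{S<Q}S\subset B(y_0,C\ell(Q))$, which you correctly identify as the crux. That containment is precisely what the paper extracts from W\ref{itemWvertical} and the Lipschitz character of $\Omega$ (the chain below $Q$ stays in a vertical column over the projection of $Q$, and the Lipschitz graph bounds the vertical extent by $\sim\ell(Q)$), so your anticipated obstacle is real but already handled by the ingredients you cite.
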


\begin{proof}
The sum in 1) can  just bounded by
	$$ C \int \frac{ g(x) \, dx}{(|x-y|+r)^{d+\eta}}$$
for every $y\in Q$, and this can be bounded separating the integral region in dyadic annuli.
The sum in (2) can be bounded by an analogous reasoning. Using the property W\ref{itemWvertical} of Definition \ref{defwhitney} we can see that 3)
is a particular case of 2) for $\eta=d$.
\end{proof}
Note that we used the Lipschitz character of $\Omega$ only to prove 3). In Section \ref{secmain2} we  will make use of the following technical results, specific for Lipschitz domains, which sharpen the results of the previous lemma for $g$ constant.
\begin{lemma}\label{lemad-1}
Let $a>d-1$ and $Q$ a Whitney cube. Then
$$\sum_{S\leq Q}\ell(S)^{a}\approx \ell(Q)^a$$
with constants depending only on $a$ and $d$.
\end{lemma}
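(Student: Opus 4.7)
The lower bound $\sum_{S \leq Q} \ell(S)^a \geq \ell(Q)^a$ is immediate, since $Q \leq Q$ contributes the term $\ell(Q)^a$. The content of the lemma is the matching upper bound, and my plan is to reduce it to a vertical projection / Fubini-type computation relative to one of the windows $\mathcal{Q}_k$.

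The main case is when $Q$ is peripheral. By Remark \ref{remvertical} together with the chain rules 3.1--3.3, every $S \leq Q$ that is peripheral lies in a common window $\mathcal{Q}_k$ with $Q$, and $Q$ is above $S$ with respect to this window. Writing $\pi = \pi_k$ for the vertical projection onto the base of $\mathcal{Q}_k$, the key geometric claim is
\begin{equation*}
\bigcup_{S \leq Q} \pi(S) \subset C_0\, \pi(Q),
\end{equation*}
for an enlargement factor $C_0$ depending only on the Whitney and Lipschitz parameters. This is where I expect the main difficulty: the proof means traversing the chain $[S, Q] = (S = S_0, S_1, \ldots, S_m = Q)$, observing that consecutive cubes are neighbors with side lengths comparable up to factor $2$ (property W\ref{itemWneighbor}), and that the Whitney condition W\ref{itemboundeddistance} forces $\ell(S_i)$ to grow geometrically as we move away from the boundary up to $\ell(Q)$. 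The horizontal displacement from $\pi(S)$ to $\pi(Q)$ is then controlled by a convergent geometric sum of the $\ell(S_i)$'s, bounded by $C\ell(Q)$.

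Granted this, I would use $|\pi(S)| \approx \ell(S)^{d-1}$ to rewrite the sum and swap it with an integral over the base of the window:
\begin{equation*}
\sum_{S \leq Q} \ell(S)^a \;\approx\; \sum_{S \leq Q} \ell(S)^{a-d+1}\, |\pi(S)| \;=\; \int_{C_0\pi(Q)} \;\sum_{\substack{S \leq Q \\ y' \in \pi(S)}} \ell(S)^{a-d+1}\,dy'.
\end{equation*}
For each fixed $y'$, property W\ref{itemWvertical} bounds by a constant the number of cubes $S \in \mathcal{W}_k$ of any given side length whose projection contains $y'$, and their sizes only range over dyadic scales up to $\approx \ell(Q)$. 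Hence the inner sum is dominated by a geometric series of the form $\sum_{j \geq 0}(2^{-j}\ell(Q))^{a-d+1}$, which converges by the hypothesis $a > d - 1$ and is $\lesssim \ell(Q)^{a-d+1}$. Combined with $|C_0\pi(Q)| \approx \ell(Q)^{d-1}$, this yields the desired bound $\lesssim \ell(Q)^a$.

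The case when $Q$ is central is easier: by Remark \ref{remabove}(4) every central cube has side length $\approx R \approx \ell(Q)$, and the peripheral descendants of $Q$ can be grouped according to the window of their first central ancestor. Applying the peripheral argument to the top peripheral cube of each window and summing over the finitely many windows (a bound that is absorbed into the Whitney/Lipschitz constants) gives the same estimate, completing the proof.
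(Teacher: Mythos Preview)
Your proposal is correct and follows essentially the same route as the paper: both arguments use the projection structure inside a window together with property W\ref{itemWvertical} to reduce the sum to the convergent geometric series $\sum_{j\geq 0} 2^{-j(a-(d-1))}$. The only cosmetic difference is that the paper groups the cubes $S\leq Q$ directly by their scale $\ell(S)=2^{-j}\ell(Q)$ and bounds the cardinality $\#\{S<Q:\ell(S)=2^{-j}\ell(Q)\}\lesssim 2^{(d-1)j}$ using Remark~\ref{remvertical} and W\ref{itemWvertical}, rather than passing through the Fubini/integral reformulation and the projection-containment claim you set up.
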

\begin{proof}
First assume that $Q$ is not central. Selecting the cubes by their side-length, we can write
\begin{align*}
\sum_{S<Q}\ell(S)^{a}
	&=\sum_{j=1}^\infty\sum_{\substack{S<Q\\ \ell(S)=2^{-j}\ell(Q)}}(2^{-j}\ell(Q))^{a}
	=\ell(Q)^{a}\sum_{j=1}^\infty2^{-ja} \# \{S<Q: \ell(S)=2^{-j}\ell(Q)\}.
\end{align*}
Using W\ref{itemWvertical} and Remark \ref{remvertical} we get that 
$$ \# \{S<Q: \ell(S)=2^{-j}\ell(Q)\}\leq C2^{(d-1)j}$$
and thus
$$\sum_{S<Q}\ell(S)^{a}\lesssim \ell(Q)^a\sum_{j=1}^\infty2^{-j(a-(d-1))}.$$
This is bounded if $a>d-1$.

By the same token, given an $R$-window $\mathcal{Q}_k$,
\begin{equation*}
\sum_{S\subset\mathcal{Q}_k}\ell(S)^{a}\lesssim R^a.
\end{equation*}
Thus, the lemma is also valid for $Q$ central by the last statement of Remark \ref{remabove}.
\end{proof}

\begin{lemma}\label{lembad-1}
Let $b>a>d-1$ and $Q$ a Whitney cube. Then
$$\sum_{S\in\mathcal{W}}\frac{\ell(S)^a}{\Dist(Q,S)^b}\leq C \ell(Q)^{a-b},$$
with $C$ depending only on $a$, $b$ and $d$.
\end{lemma}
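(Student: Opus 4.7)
The plan is to perform a dyadic decomposition of $\mathcal{W}$ based on the value of $\Dist(Q,S)$, estimate each dyadic layer using the Lipschitz geometry (essentially Lemma \ref{lemad-1}), and then sum a convergent geometric series.

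For each $j\geq 0$, let $A_j=\{S\in\mathcal{W}:2^j\ell(Q)\leq \Dist(Q,S)<2^{j+1}\ell(Q)\}$. By the definition of $\Dist$, every $S\in A_j$ satisfies $\ell(S)\leq 2^{j+1}\ell(Q)$ and is contained in a ball of radius $\lesssim 2^j\ell(Q)$ around $Q$. Since $\Dist(Q,S)^b\approx (2^j\ell(Q))^b$ on $A_j$,
$$\sum_{S\in A_j}\frac{\ell(S)^a}{\Dist(Q,S)^b}\approx (2^j\ell(Q))^{-b}\sum_{S\in A_j}\ell(S)^a.$$

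The core step is to show $\sum_{S\in A_j}\ell(S)^a\lesssim (2^j\ell(Q))^a$. I would mimic the dyadic grouping in the proof of Lemma \ref{lemad-1}: split $A_j$ by side-length, writing $\ell(S)=2^{j-k}\ell(Q)$ for $k\geq 0$. Property W\ref{itemWvertical}, combined with the Lipschitz character of $\partial\Omega$ and the fact that peripheral cubes of side $\ell(S)$ lie in an $\ell(S)$-neighborhood of $\partial\Omega$, gives
$$\#\{S\in A_j:\ell(S)=2^{j-k}\ell(Q)\}\lesssim \bigl(2^j\ell(Q)/\ell(S)\bigr)^{d-1}=2^{k(d-1)},$$
(central cubes contributing only $O(1)$ terms of comparable size, handled as in Lemma \ref{lemad-1}). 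Therefore
$$\sum_{S\in A_j}\ell(S)^a\lesssim \sum_{k\geq 0}2^{k(d-1)}(2^{j-k}\ell(Q))^a=(2^j\ell(Q))^a\sum_{k\geq 0}2^{-k(a-d+1)},$$
and the inner sum converges because $a>d-1$, yielding the claimed bound.

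Combining the two estimates and summing over $j\geq 0$,
$$\sum_{S\in\mathcal{W}}\frac{\ell(S)^a}{\Dist(Q,S)^b}\lesssim \sum_{j\geq 0}(2^j\ell(Q))^{a-b}=\ell(Q)^{a-b}\sum_{j\geq 0}2^{j(a-b)},$$
which converges since $b>a$, giving the required bound $\lesssim \ell(Q)^{a-b}$. The main obstacle is the counting estimate $\#\{S\in A_j:\ell(S)=2^{j-k}\ell(Q)\}\lesssim 2^{k(d-1)}$; everything else is a bookkeeping exercise on two convergent geometric series. If one prefers to avoid re-deriving this count, an alternative is to find, for each $A_j$, a single Whitney cube $\widehat Q_j$ of side $\sim 2^j\ell(Q)$ that dominates $A_j$ in the chain order and invoke Lemma \ref{lemad-1} directly on $\widehat Q_j$.
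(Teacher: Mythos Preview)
Your argument is correct and is essentially the same as the paper's: a double dyadic decomposition by $\Dist(Q,S)$ and by $\ell(S)$, the counting estimate $\#\{S:\ell(S)=2^{j-k}\ell(Q),\ \Dist(Q,S)\approx 2^j\ell(Q)\}\lesssim 2^{k(d-1)}$ coming from W\ref{itemWvertical} and the Lipschitz graph geometry, and then two geometric series using $a>d-1$ and $b>a$. The paper organizes the same computation with the index $i=j-k$ and is slightly more explicit about localizing to a single window $\mathcal{Q}_k$ (handling the far cubes $S\not\subset\mathcal{Q}_k$ and the case $Q$ central separately via Lemma~\ref{lemad-1}), whereas you invoke the counting estimate globally; since there are only finitely many windows this is harmless. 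Your closing alternative of finding a single $\widehat Q_j$ with $A_j\subset\{S\leq\widehat Q_j\}$ does not quite work as stated, because a ball of radius $2^j\ell(Q)$ need not sit inside a single shadow, but this is only a side remark and does not affect your main proof.
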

\begin{proof}

Let us assume that $Q\in \mathcal{W}_k$.
First of all we consider the cubes contained in  $\mathcal{Q}_k$ and we classify those cubes by their side-length and their distance to $Q$:
\begin{align*}
\sum_{S\subset\mathcal{Q}_k}\frac{\ell(S)^a}{\Dist(Q,S)^b}
	& \leq \sum_{i=-\infty}^\infty \sum_{j=0}^\infty \sum_{\substack{S: \ell(S)=2^i\ell(Q)\\ 2^j \ell(Q)\leq \Dist(S,Q)<2^{j+1}\ell(Q)}} \frac{(2^i \ell(Q))^a}{(2^j \ell(Q))^b} \\
	& \leq \ell(Q)^{a-b} \sum_{i=-\infty}^\infty \sum_{j=0}^\infty \frac{2^{ia}}{2^{jb}} \#\{S: \ell(S)=2^i\ell(Q) \mbox{, } \Dist(S,Q)<2^{j+1}\ell(Q)\} .
\end{align*}
Note that the value of $j$ in the last sum must be greater or equal than $i$ because, otherwise, the last cardinal would be zero. 

Using again W\ref{itemWvertical}, we can see that
\begin{align*}
\#\{S\in \mathcal{W}_k: \ell(S)=2^i\ell(Q) \mbox{, } \Dist (S,Q)<2^{j+1}\ell(Q)\}
	& \leq C \left(\frac{(2^{j+1})\ell(Q)}{2^i\ell(Q)}\right)^{d-1} = C 2^{(j-i)(d-1)}.
\end{align*}
Thus, 
\begin{align*}
\sum_{S\subset\mathcal{Q}_k}\frac{\ell(S)^a}{\Dist(Q,S)^b}
	& \lesssim \ell(Q)^{a-b} \sum_{j=0}^\infty \sum_{i=-\infty}^j 2^{i(a+1-d)-j(b+1-d)} \leq C_{a,b,d} \ell(Q)^{a-b}
\end{align*}
as soon as $b>a>d-1$.

On the other hand, when $S\nsubset  \mathcal{Q}_k$ the long distance $\Dist (Q,S)$ is always bounded from below by a constant times $R$ (because $Q \subset \delta_0\mathcal{Q}_k$), so separating $\mathcal{W}$ in subcollections $\mathcal{W}_k$ and using Lemma \ref{lemad-1}, 
\begin{align}\label{eqacotalallunyania}
\sum_{S\nsubset \mathcal{Q}_k}\frac{\ell(S)^a}{\Dist(Q,S)^b}
	& \lesssim \sum_{S \in\mathcal{W}_0} \frac{(\diam\Omega)^a}{R^b} + \sum_{j\neq k} \sum_{S \in\mathcal{W}_j} \frac{\ell(S)^a}{R^b} \lesssim R^{a-b}\lesssim \ell(Q)^{a-b}.
\end{align}

To prove the Lemma for a central cube $Q\in \mathcal{W}_0$, just apply an argument analogous to \rf{eqacotalallunyania}.
\end{proof}

\section{Approximating Polynomials}\label{secpoly}
Recall that the Poincar\'e inequality tells us that, given a cube $Q$ and a function $f\in W^{1,p}(Q)$ with $0$ mean in the cube, 
$$\norm{f}_{L^p(Q)}\lesssim \ell(Q) \norm{\nabla f}_{L^p(Q)}$$
with universal constants once we fix $d$ and $1\leq p< \infty$ (see, for example, \cite[Theorem 4.4.2]{ziemer}).

If we want to iterate that inequality, we also need the gradient of $f$ to have 0 mean on $Q$. That leads us to define the next approximating polynomials.
\begin{definition}
Let $\Omega$ be a domain and a cube $Q \subset \Omega$. Given $f\in L^1(Q)$ with weak derivatives up to order $n$, we define $\mathbf{P}^{n}_Q (f)\in \mathcal{P}^{n}$ as the unique polynomial (restricted to $\Omega$) of degree smaller or equal than $n$ such that 
\begin{equation}\label{eqdefpnQ}
\fint_{Q} D^\beta \mathbf{P}_Q^n f \,dm=\fint_{Q} D^\beta f\, dm
\end{equation}
for every multiindex $\beta \in \N^d$ with $|\beta| \leq n$.
\end{definition}

Note that these polynomials can be understood as a particular case of the projection $L:W^{1,p}(Q) \to \mathcal{P}^n $ introduced by Norman G. Meyers in \cite{meyers}.

\begin{lemma}\label{lempoly}
Given a cube $Q$ and $f\in W^{n-1,1}(3Q)$, the polynomial $\mathbf{P}^{n-1}_{3Q} f\in \mathcal{P}^{n-1}$ exists and is unique.
Furthermore, this polynomial has the next properties:
\begin{enumerate}[P1.]
\item\label{itemMGammaAcotat} Let $x_Q$ be the center of $Q$. If we consider the Taylor expansion of $\mathbf{P}_{3Q}^{n-1} f$ at $x_Q$, 
\begin{equation}\label{eqtaylorexp}
\mathbf{P}_{3Q}^{n-1} f (y)= \sum_{\substack{\gamma\in\N^d \\ |\gamma|<n}}m_{Q,\gamma} (y-x_Q)^\gamma,
\end{equation}
then the coefficients $m_{Q,\gamma}$ are bounded by
$$|m_{Q,\gamma}| \leq c_n  \sum_{j=|\gamma|}^{n-1} \norm{\nabla^{j} f}_{L^\infty(3Q)} \ell(Q)^{j-|\gamma|}. $$

\item\label{itemPpoincare}Furthermore, if $f\in W^{n,p}(3Q)$, for $1\leq p<\infty$ we have
 $$\|f-\mathbf{P}_{3Q}^{n-1} f\|_{L^p(3Q)}\leq C \ell(Q)^n \norm{\nabla^n f}_{L^p(3Q)} .$$

\item\label{itemPchain} Given an oriented Whitney covering $\mathcal{W}$ with chain function $[\cdot,\cdot]$ associated to $\Omega$, and given two Whitney cubes $Q, S\in \mathcal{W}$ and $f\in W^{n,p}(\Omega)$, 
\begin{equation*}
\norm{f-\mathbf{P}^{n-1}_{3Q} f}_{L^1(S)}  \leq \sum_{P\in [S,Q]}\frac{\ell(S)^d D(P,S)^{n-1}}{\ell(P)^{d-1}}\norm{\nabla^n f}_{L^1(3P)}.
\end{equation*}
%
%
\end{enumerate}
\end{lemma}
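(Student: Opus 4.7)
\smallskip
\noindent
\textbf{Plan of proof.} The existence and uniqueness of $\mathbf{P}^{n-1}_{3Q}f$ amount to solving a square linear system, since $\dim \mathcal{P}^{n-1}$ equals the number of multiindices $\beta$ with $|\beta|\leq n-1$. For uniqueness one performs a downward induction on $|\beta|$: if $f=0$ and the polynomial $P\in\mathcal{P}^{n-1}$ satisfies \rf{eqdefpnQ}, then $D^\beta P$ is constant for $|\beta|=n-1$, hence equal to its mean, which vanishes; now shrink $|\beta|$ by one and iterate, recovering the vanishing of $P$. As a byproduct, $\mathbf{P}^{n-1}_{3Q}$ is a linear projection on $W^{n-1,1}(3Q)$ that fixes polynomials of degree $\leq n-1$, a fact crucial for P\ref{itemPchain}.

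\smallskip
\noindent
For P\ref{itemMGammaAcotat} I would run a downward induction on $|\gamma|$ in the Taylor expansion \rf{eqtaylorexp}. For $|\gamma|=n-1$ the polynomial $D^\gamma \mathbf{P}^{n-1}_{3Q}f$ is the constant $\gamma!\,m_{Q,\gamma}$, which by \rf{eqdefpnQ} equals $\fint_{3Q} D^\gamma f\,dm$, bounded by $\|\nabla^{n-1}f\|_{L^\infty(3Q)}$. For smaller $|\gamma|$, differentiate \rf{eqtaylorexp} and take means on $3Q$ to obtain
\[
\gamma!\,m_{Q,\gamma}=\fint_{3Q}D^\gamma f\,dm-\sum_{\substack{\tilde\gamma>\gamma}}m_{Q,\tilde\gamma}\,\frac{\tilde\gamma!}{(\tilde\gamma-\gamma)!}\fint_{3Q}(y-x_Q)^{\tilde\gamma-\gamma}dy,
\]
use $|\fint_{3Q}(y-x_Q)^{\tilde\gamma-\gamma}dy|\lesssim \ell(Q)^{|\tilde\gamma-\gamma|}$ and plug in the bounds from the inductive step. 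Property P\ref{itemPpoincare} then follows by iterating the scalar Poincar\'e inequality $n$ times: $f-\mathbf{P}^{n-1}_{3Q}f$ and each of its derivatives up to order $n-1$ have zero mean on $3Q$ by construction, and $\nabla^n \mathbf{P}^{n-1}_{3Q}f \equiv 0$.

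\smallskip
\noindent
The main challenge is P\ref{itemPchain}, for which the plan is a telescopic decomposition along the chain $[S,Q]=(S=Q_1,Q_2,\ldots,Q_M=Q)$:
\[
f-\mathbf{P}^{n-1}_{3Q}f=(f-\mathbf{P}^{n-1}_{3S}f)+\sum_{i=1}^{M-1}T_i,\qquad T_i:=\mathbf{P}^{n-1}_{3Q_i}f-\mathbf{P}^{n-1}_{3Q_{i+1}}f.
\]
The first summand is handled on $S$ by P\ref{itemPpoincare} with $p=1$. For each $T_i\in\mathcal{P}^{n-1}$, I would combine two classical tools for polynomials: Markov's inequality on a cube, $\|D^\alpha T\|_{L^\infty(Q_i)}\lesssim \ell(Q_i)^{-|\alpha|}\|T\|_{L^\infty(Q_i)}$, together with a Taylor expansion centered at a point of $Q_i$, which gives the extrapolation estimate
\[
\|T_i\|_{L^\infty(S)}\lesssim \Bigl(\tfrac{D(Q_i,S)}{\ell(Q_i)}\Bigr)^{n-1}\|T_i\|_{L^\infty(Q_i)},
\]
and the finite-dimensional norm equivalence $\|T_i\|_{L^\infty(Q_i)}\lesssim \ell(Q_i)^{-d}\|T_i\|_{L^1(Q_i)}$. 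Multiplying by $|S|\approx \ell(S)^d$ yields
\[
\|T_i\|_{L^1(S)}\lesssim \frac{\ell(S)^d\,D(Q_i,S)^{n-1}}{\ell(Q_i)^{n-1+d}}\,\|T_i\|_{L^1(Q_i)}.
\]
To close the estimate I would use that $\mathbf{P}^{n-1}_{3Q_i}$ fixes polynomials of degree $\leq n-1$, so $T_i=\mathbf{P}^{n-1}_{3Q_i}(f-\mathbf{P}^{n-1}_{3Q_{i+1}}f)$; combining this with P\ref{itemPpoincare} applied to a cube containing $3Q_i\cup 3Q_{i+1}$ (legitimate since neighbors have comparable sides by W\ref{itemWneighbor}) produces $\|T_i\|_{L^1(Q_i)}\lesssim \ell(Q_i)^n\|\nabla^n f\|_{L^1(3Q_i)}$. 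Substituting and summing over $i$ gives exactly the claimed bound, with the dimensional balance $\ell(Q_i)^n\cdot\ell(Q_i)^{-(n-1)-d}=\ell(Q_i)^{1-d}$.

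\smallskip
\noindent
\textbf{Main obstacle.} The technical core is P\ref{itemPchain}: one must track the power of $\ell(Q_i)$ precisely so that the three ingredients (Markov/Taylor extrapolation, polynomial norm equivalence, and the Poincar\'e-type inequality P\ref{itemPpoincare}) combine to produce the precise exponents $\ell(S)^d\,D(P,S)^{n-1}/\ell(P)^{d-1}$. The linearity and projection property of $\mathbf{P}^{n-1}_{3P}$, together with the fact that neighboring Whitney cubes have comparable side-length, are what allow the local $L^1$-norm of each $T_i$ to be controlled by $\|\nabla^n f\|_{L^1(3Q_i)}$ alone, rather than by a sum over several neighbors that would spoil the chain sum.
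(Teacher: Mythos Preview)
Your argument is correct and follows essentially the same route as the paper: triangular system for existence/uniqueness and P1, iterated Poincar\'e for P2, and for P3 a telescopic sum along the chain combined with polynomial extrapolation and norm equivalence on cubes. The only imprecision is in your last step for P3: invoking ``P2 on a cube containing $3Q_i\cup 3Q_{i+1}$'' does not directly bound $\|T_i\|_{L^1(Q_i)}$, since P2 refers to $\mathbf{P}^{n-1}$ of that larger cube; the paper instead works on the overlap $3Q_i\cap 3Q_{i+1}$ and uses the triangle inequality $\|T_i\|_{L^1}\le \|f-\mathbf{P}^{n-1}_{3Q_i}f\|_{L^1(3Q_i)}+\|f-\mathbf{P}^{n-1}_{3Q_{i+1}}f\|_{L^1(3Q_{i+1})}$, applying P2 separately to each piece, which yields $\ell(Q_i)^n\bigl(\|\nabla^n f\|_{L^1(3Q_i)}+\|\nabla^n f\|_{L^1(3Q_{i+1})}\bigr)$ and is absorbed in the chain sum.
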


\begin{proof}
Note that \rf{eqdefpnQ} is a triangular system of equations on the coefficients of the polynomial. 
Indeed, for $\gamma$ fixed, if the polynomial exists and has Taylor expansion \rf{eqtaylorexp}, then
$$D^\gamma \mathbf{P}^{n-1}_{3Q} f (y)=\sum_{\beta\geq\gamma}m_{Q,\beta}\frac{\beta!}{(\beta-\gamma)!}(y-x_Q)^{\beta-\gamma}.$$
 When we take means on the cube $3Q$, 
\begin{align*}
\fint_{3Q} D^\gamma f \, dm
	&= \fint_{3Q} D^\gamma \mathbf{P}_{3Q}^{n-1} f  \, dm\\
	& =\sum_{\beta\geq\gamma}m_{Q,\beta}\frac{\beta!}{(\beta-\gamma)!} \left(\frac32\ell(Q)\right)^{|\beta-\gamma|}\fint_{Q(0,1)} y^{\beta-\gamma} dy \\
	& =\sum_{\beta\geq\gamma}C_{\beta,\gamma} m_{Q,\beta} \ell(Q)^{|\beta-\gamma|},
\end{align*}
which is a triangular system of equations on the coefficients $m_{Q, \beta}$. 

Solving for $m_{Q,\gamma}$, since $C_{\gamma, \gamma}\neq 0$ we obtain the explicit expression
\begin{equation}\label{eqmQgammaaillat}
m_{Q,\gamma} = \frac{1}{C_{\gamma,\gamma}} \fint_{3Q} D^\gamma f \, dm - \sum_{\beta>\gamma} C_{\beta,\gamma}m_{Q,\beta} \ell(Q)^{|\beta-\gamma|}.
\end{equation}
For $|\gamma|=n-1$ this gives the value of $m_{Q,\gamma}$ in terms of $D^\gamma f$, 
$$m_{Q,\gamma} = \frac{1}{C_{\gamma,\gamma}} \fint_{3Q} D^\gamma f \, dm. $$ 
Using induction on $n-|\gamma|$ we get the existence and uniqueness of $\mathbf{P}^{n-1}_{3Q} f$.
Taking absolute values we obtain P\ref{itemMGammaAcotat}.

The equality \rf{eqdefpnQ} allows us to iterate the Poincar\'e inequality 
 $$\|f-\mathbf{P}_{3Q}^{n-1} f\|_{L^p(3Q)}\leq C \ell(Q) \|\nabla (f-\mathbf{P}_{3Q}^{n-1} f) \|_{L^p(3Q)}\leq \dots\leq  C^n \ell(Q)^n \norm{\nabla^n f}_{L^p(3Q)},$$
that is, P\ref{itemPpoincare}.

To prove P\ref{itemPchain}, we consider the chain function in Definition \ref{defOrientedWhitney} to write
\begin{align}\label{eqCadenaF}
\norm{f-\mathbf{P}^{n-1}_{3Q} f}_{L^1(S)} \leq \norm{f-\mathbf{P}^{n-1}_{3S} f}_{L^1(S)} + \sum_{P\in [S,Q)}\norm{\mathbf{P}^{n-1}_{3P} f-\mathbf{P}^{n-1}_{3\mathcal{N}(P)} f}_{L^1(S)}\end{align}
where we write $\mathcal{N}(P)$ instead of $\mathcal{N}_{[S,Q]}(P)$ from Definition \ref{defgeneralchain}.
For every polynomial $q \in \mathcal{P}^{n-1}$, from the equivalence of norms of polynomials of bounded degree $\mathcal{P}^{n-1}$ it follows that
\begin{equation*}
\norm{q}_{L^1(Q)}\approx \ell(Q)^d \norm{q}_{L^\infty(Q)},
\end{equation*}
and for $r>1$, also
\begin{equation*}
\norm{q}_{L^\infty(rQ)}\lesssim r^{n-1} \norm{q}_{L^\infty(Q)},
\end{equation*}
with constants depending only on $d$ and $n$.
Applying these estimates to $q=\mathbf{P}^{n-1}_{3P} f-\mathbf{P}^{n-1}_{3\mathcal{N}(P)} f$ with $r\approx \frac{\Dist(P,S)}{\ell(P)}$, it follows that
\begin{align*}
\norm{\mathbf{P}^{n-1}_{3P} f-\mathbf{P}^{n-1}_{3\mathcal{N}(P)} f}_{L^1(S)}
	&\approx \norm{\mathbf{P}^{n-1}_{3P} f-\mathbf{P}^{n-1}_{3\mathcal{N}(P)} f}_{L^\infty(S)}\ell(S)^d\\
	&\lesssim \norm{\mathbf{P}^{n-1}_{3P} f-\mathbf{P}^{n-1}_{3\mathcal{N}(P)} f}_{L^\infty(3P\cap3\mathcal{N}(P))}\frac{\ell(S)^d D(P,S)^{n-1}}{\ell(P)^{n-1}}\\
	&\approx \norm{\mathbf{P}^{n-1}_{3P} f-\mathbf{P}^{n-1}_{3\mathcal{N}(P)} f}_{L^1(3P\cap3\mathcal{N}(P))}\frac{\ell(S)^d D(P,S)^{n-1}}{\ell(P)^{n-1}\ell(P)^d}.\\
\end{align*}
Using this estimate in \rf{eqCadenaF} and P\ref{itemPpoincare} we get
\begin{align*}
\norm{f-\mathbf{P}^{n-1}_{3Q} f}_{L^1(S)}
	&  \lesssim   \sum_{P\in [S,Q)}\left(\norm{\mathbf{P}^{n-1}_{3P} f-f}_{L^1(3P)}+\norm{f-\mathbf{P}^{n-1}_{3\mathcal{N}(P)} f}_{L^1(3\mathcal{N}(P))}\right)\frac{\ell(S)^d D(P,S)^{n-1}}{\ell(P)^{d+n-1}}\\
	&  \lesssim  \sum_{P\in [S,Q]}\norm{f-\mathbf{P}^{n-1}_{3P} f}_{L^1(3P)}\frac{\ell(S)^d D(P,S)^{n-1}}{\ell(P)^{d+n-1}}\\
	&  \leq  \sum_{P\in [S,Q]}\norm{\nabla^n f}_{L^1(3P)}\frac{\ell(S)^d D(P,S)^{n-1}}{\ell(P)^{d-1}}.
\end{align*}
%
%
\end{proof}

\section{Some remarks on the derivatives of $T_\Omega f$}\label{secderivatives}
From now on, we assume $T$ to be a smooth convolution Calder\'on-Zygmund operator of order $n$. Recall that for $f\in L^p$ and $x\notin \supp(f)$, 
$$Tf(x) = \int K(x-y)f(y) \, dy,$$
where the kernel $K$ has derivatives bounded by
\begin{equation}\label{eqCZKderivades}
|\nabla^jK(x)|\leq \frac{ C}{|x|^{d+j}} \mbox{\,\,\,\, for $0\leq j \leq n$}.
\end{equation}
Given a function $f\in W^{n,p}(\Omega)$, we want to see that its transform $T_\Omega f=\chi_\Omega\, T(\chi_\Omega \, f)$ is in some Sobolev space, so we need to check that its weak derivatives exist up to order $n$. Indeed that is the case.

\begin{lemma}\label{lemweaksense}
Given $f\in W^{n,p}(\Omega)$, the weak derivatives of $T_\Omega f$ in $\Omega$ exist up to order $n$.
\end{lemma}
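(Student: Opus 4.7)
The claim is purely local: to conclude that $T_\Omega f$ has weak derivatives of order $\leq n$ on all of $\Omega$, it suffices to prove that each point of $\Omega$ has an open neighborhood on which such derivatives exist as $L^1_{loc}$ functions. The plan is to fix a Whitney cube $Q$ of an oriented covering of $\Omega$ and work on a slight enlargement of $Q$ using a cut-off decomposition. Because $Q$ is Whitney, $\dist(Q,\partial\Omega)\approx\ell(Q)$, so one can choose $\phi\in C^\infty_c(\Omega)$ with $\phi\equiv 1$ on $2Q$, $\supp\phi\subset\lambda Q\subset\Omega$ for some absolute constant $\lambda$, and $\|D^\beta\phi\|_\infty\lesssim\ell(Q)^{-|\beta|}$ for $|\beta|\leq n$. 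Since $T_\Omega f=T(\chi_\Omega f)$ on the interior of $\Omega$, we split
$$T(\chi_\Omega f)=T(\phi f)+T\bigl((\chi_\Omega-\phi)f\bigr)$$
and analyse each summand separately.

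For the first summand, Leibniz's formula \rf{eqleibnitz} together with the compact support of $\phi$ inside $\Omega$ yields $\phi f\in W^{n,p}(\R^d)$ after extension by zero. Since $T$ is defined as convolution with the tempered distribution $W_K$, on Schwartz functions one has $D^\alpha(W_K*\psi)=W_K*D^\alpha\psi$; approximating $\phi f$ in $W^{n,p}(\R^d)$ by Schwartz functions and invoking the $L^p$-boundedness of $T$ promotes this identity to $D^\alpha T(\phi f)=T(D^\alpha(\phi f))\in L^p(\R^d)$ for every $|\alpha|\leq n$. Hence $T(\phi f)\in W^{n,p}(\R^d)$, so in particular it has the desired weak derivatives on any open subset of $\R^d$.

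For the second summand, set $g:=(\chi_\Omega-\phi)f$. Since $g$ vanishes on $2Q$ and $g\in L^p(\R^d)$, for $y\in\tfrac{3}{2}Q$ one has $|y-z|\geq c\,\ell(Q)$ uniformly over $z\in\supp g$, and the ordinary integral $T(g)(y)=\int K(y-z)\,g(z)\,dz$ converges absolutely: the kernel bound $|K(x)|\lesssim|x|^{-d}$ combined with H\"older's inequality and the integrability of $|x|^{-dp'}$ away from the origin (valid because $p<\infty$) controls the tail, while the near-$y$ contribution is zero. The analogous bounds $|\nabla^j K(x)|\lesssim|x|^{-d-j}$ for $j\leq n$ provide integrable majorants uniform in $y\in\tfrac{3}{2}Q$, so dominated convergence permits differentiation under the integral sign. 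The result is that $T(g)$ is classically $C^n$ on $\tfrac{3}{2}Q$, with $D^\alpha T(g)(y)=\int D^\alpha_y K(y-z)\,g(z)\,dz$.

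Adding the two summands, $T_\Omega f$ possesses weak derivatives in $L^1_{loc}(\mathrm{int}(Q))$ of every order $\leq n$. Since the interiors of the Whitney cubes cover $\Omega$, this completes the argument. I do not foresee any genuine obstacle: the proof is a standard cut-off reduction that exploits exactly two features of $T$, namely its $L^p$-boundedness (which, combined with the commutation of convolutions with derivatives, handles the near-part on all of $\R^d$) and the smoothness of its kernel off the diagonal (which handles the far-part pointwise).
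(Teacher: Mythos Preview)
Your argument is correct and follows essentially the same approach as the paper: a Whitney cut-off splits $T_\Omega f$ into a compactly supported piece handled by the $W^{n,p}(\R^d)$-boundedness of $T$ (the paper's Remark \ref{remsobolevglobal}) and a far piece handled by differentiating under the integral sign via the smoothness of $K$ off the diagonal (the paper's Lemma \ref{lemderivaT}). The only cosmetic difference is that the paper packages the local conclusions into a single global formula for $D^\alpha T_\Omega f$ using a partition of unity $\{\psi_Q\}$, whereas you argue purely locally and then glue; one tiny slip is that the \emph{interiors} of the Whitney cubes need not cover $\Omega$, but since you actually obtain the derivatives on $\tfrac{3}{2}Q$, the open cover $\{\tfrac{3}{2}Q\}$ does the job.
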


Before proving this, we consider the functions defined in all $\R^d$.

\begin{rem}\label{remsobolevglobal}
Since $T$ is a bounded linear operator in $L^2(\R^d)$ that commutes with translations, for Schwartz functions the derivative commutes with $T$ (see \cite[Lemma 2.5.3]{grafakos}). Using that $\mathcal{S}$ is dense in $W^{n,p}$ (see \cite[sections 2.3.3 and 2.5.6]{triebel}, for instance), we conclude that for every $f \in W^{n,p}(\R^d)$
\begin{equation}\label{eqderivacommuta}
D^\alpha T(f)=TD^\alpha(f)
\end{equation} 
and, thus, the operator $T$ is bounded in $W^{n,p}(\R^d)$. 
\end{rem}

\begin{definition}
Let $K \in W^{n,1}_{loc}(\R^d \setminus \{0\})$ be the kernel of $T$ and consider a function $f\in L^p$, a multiindex $\alpha\in\N^d$ with $|\alpha|\leq n$ and $x\notin \supp(f)$. We define 
\begin{equation*}
T^{(\alpha)} f(x)=\int D^\alpha K (x-y) f(y)\,dy.
\end{equation*}
\end{definition} 

\begin{lemma}\label{lemderivaT}
Let  $f\in L^p$. Then $Tf$ has weak derivatives up to order $n$ in $\R^d\setminus\supp f$. Moreover, for every multiindex $\alpha \in \N^d$ with $|\alpha|\leq n$ and $x\notin \supp f$
$$D^\alpha T f(x)=T^{(\alpha)}f(x).$$
\end{lemma}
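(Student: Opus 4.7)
The plan is to fix a point $x_0\notin\supp f$, set $r=\dist(x_0,\supp f)/2>0$, and work in the ball $B_0=B(x_0,r/2)$. First I would check that $T^{(\alpha)}f$ is well defined as an absolutely convergent integral, uniformly in $B_0$: for $x\in B_0$ and $y\in\supp f$ one has $|x-y|\geq r/2$, so by the kernel estimate in Definition \ref{defCZK} and H\"older's inequality with conjugate exponent $p'$,
\[
\int_{\supp f}|D^\alpha K(x-y)||f(y)|\,dy\leq C_K\|f\|_{L^p}\left(\int_{|z|\geq r/2}|z|^{-(d+|\alpha|)p'}\,dz\right)^{1/p'}<\infty,
\]
since $(d+|\alpha|)p'>d$ for every $1<p<\infty$ and every $|\alpha|\geq 0$. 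Crucially this dominant is independent of $x\in B_0$.

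Once absolute convergence is in hand, a routine dominated-convergence argument (differentiating Newton quotients one coordinate at a time, using the same bound as dominant) shows that the function $g(x):=\int K(x-y)f(y)\,dy$ is of class $C^n$ on $B_0$ with classical derivatives $D^\alpha g(x)=T^{(\alpha)}f(x)$ for $|\alpha|\leq n$.

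The main obstacle is identifying $Tf$ with $g$ on $B_0$, since a priori $Tf$ is defined through the multiplier $\widehat{W_K}$ rather than as an integral. The strategy is to approximate $f$ by $f_k\in C_c^\infty(\R^d)$ with $f_k\to f$ in $L^p$ and $\supp f_k\cap B_0=\emptyset$ for every $k$; such a sequence can be produced by truncating $f$ outside large balls, mollifying the result, and using a diagonal argument to keep the supports away from $B_0$. For each $f_k$, the function $z\mapsto f_k(x-z)$ is Schwartz and vanishes in a neighborhood of $0$ for every $x\in B_0$, so the defining relation $\langle W_K,\phi\rangle=(K*\phi)(0)$ of Definition \ref{defCZK} yields the pointwise identity $Tf_k(x)=W_K*f_k(x)=\int K(x-y)f_k(y)\,dy$ on $B_0$. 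The $L^p$ boundedness of $T$ combined with $f_k\to f$ in $L^p$ gives $Tf_k\to Tf$ in $L^p$, so some subsequence converges a.e.\ in $B_0$; at the same time, $\int K(x-y)f_k(y)\,dy\to g(x)$ uniformly in $B_0$ by H\"older applied to $f-f_k$ together with the uniform kernel bound above. Hence $Tf=g$ a.e.\ in $B_0$.

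Combining these steps, $Tf$ coincides a.e.\ on a neighborhood of $x_0$ with the $C^n$ function $g$, whose classical derivatives are exactly $T^{(\alpha)}f$. Since classical derivatives on an open set are automatically weak derivatives, this proves $D^\alpha Tf=T^{(\alpha)}f$ in the weak sense on that neighborhood; as $x_0\notin\supp f$ was arbitrary, the identity holds on all of $\R^d\setminus\supp f$.
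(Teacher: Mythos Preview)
Your argument is correct, but the paper takes a much shorter route. Instead of showing classical $C^n$ smoothness of $g(x)=\int K(x-y)f(y)\,dy$ via dominated convergence and then identifying $Tf$ with $g$ by approximation, the paper works directly at the level of distributions: for a test function $\phi\in C^\infty_c(\R^d\setminus\supp f)$, it computes $\langle T^{(\alpha)}f,\phi\rangle$ by Fubini, integrates by parts in the $x$ variable (legitimate since $K(\cdot-y)$ is smooth on $\supp\phi$ and $\phi$ has compact support), and applies Fubini again to obtain $(-1)^{|\alpha|}\langle Tf,D^\alpha\phi\rangle$. This yields the weak derivative identity in three lines. The paper can finish so quickly because the identification $Tf(x)=\int K(x-y)f(y)\,dy$ for $x\notin\supp f$ is already recorded immediately after Definition~\ref{defCZO}, whereas you reprove it inside the lemma. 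Your approach buys a stronger conclusion (actual $C^n$ regularity, not merely existence of weak derivatives) and is more self-contained; the paper's buys brevity.
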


\begin{proof}
Take a compactly supported smooth function $\phi \in C^\infty_c(\R^d\setminus \supp f)$. We can use Fubini's Theorem and get
\begin{align*}
\langle T^{(\alpha)}f, \phi \rangle
	& =\int_{\supp \phi} \int_{\supp f} D^\alpha K (x-y) f(y) \, dy\, \phi(x) \,dx\\
	& = \int_{\supp f} \int_{\supp \phi}  D^\alpha K (x-y) \phi(x) \, dx\, f(y) \,dy.
\end{align*}
Using the definition of distributional derivative and Tonelli's Theorem again, 
\begin{align*}
\langle T^{(\alpha)}f, \phi \rangle
	& = (-1)^{|\alpha|} \int_{\supp f} \int_{\supp \phi}  K (x-y)  D^\alpha\phi(x)\, dx \, f(y)\, dy\\
	& = (-1)^{|\alpha|} \int_{\supp \phi} \int_{\supp f}   K (x-y)   f(y)\, dy \, D^\alpha\phi(x)\, dx = (-1)^\alpha \langle Tf, D^\alpha\phi \rangle.
\end{align*}
\end{proof}

\begin{proof}[Proof of Lemma \ref{lemweaksense}]
Take a classical Whitney covering of $\Omega$, $\mathcal{W}$, and for every $Q\in\mathcal{W}$, define a bump function $\varphi_Q \in C^\infty_c$ such that  $\chi_{2Q} \leq \varphi_Q\leq \chi_{3Q}$. On the other hand, let $\{\psi_Q\}_{Q\in  \mathcal{W}}$ be a partition of the unity associated to $\{\frac32 Q: Q\in \mathcal{W}\}$. 
Consider a multiindex $\alpha$ with $|\alpha|= n$.
Then take $f_1^Q=\varphi_Q \cdot f$, and $f_2^Q=(f-f_1^Q)\chi_\Omega$. One can define
$$g(y):=\sum_{Q\in  \mathcal{W}} \psi_Q(y) \left(TD^\alpha f_1^Q (y)+T^{(\alpha)} f_2^Q (y)\right).$$
 This function is defined almost everywhere in $\Omega$ and is the weak derivative $D^\alpha T_\Omega f$.

Indeed, given a test function $\phi \in C^\infty_c(\Omega)$, then, since $\phi$ is compactly supported in $\Omega$, its support intersects a finite number of Whitney double cubes and, thus,  the following additions are finite:
\begin{align}\label{eqtrencantgendos}
\nonumber \langle g ,\phi \rangle 
	& = \langle \sum_{Q\in  \mathcal{W}} \psi_Q \cdot TD^\alpha f_1^Q + \psi_Q \cdot  T^{(\alpha)} f_2^Q,\phi \rangle \\
	& =\sum_{Q\in  \mathcal{W}}\langle  TD^\alpha f_1^Q, \phi_Q \rangle
	+ \sum_{Q\in  \mathcal{W}}\langle  T^{(\alpha)} f_2^Q, \phi_Q \rangle, 
\end{align}
where $\phi_Q =\psi_Q \cdot \phi$. In the local part we can use \rf{eqderivacommuta}, so
\begin{equation*}
 \langle TD^\alpha f_1^Q, \phi_Q\rangle= (-1)^{|\alpha|} \langle  Tf_1^Q, D^\alpha (\phi_Q)\rangle.
\end{equation*}
When it comes to the non-local part, bearing in mind that $f_2^Q$ has support away form $2Q$ and $\phi_Q \in C^\infty_c(2Q)$, we can use the Lemma \ref{lemderivaT} and we get 
$$\langle T^{(\alpha)} f_2^Q, \phi_Q \rangle = (-1)^{|\alpha|}\langle Tf_2^Q, D^\alpha \phi_Q\rangle. $$

Back to \rf{eqtrencantgendos} we have
\begin{align*}
\langle g ,\phi \rangle
	&	=  \sum_{Q\in  \mathcal{W}} (-1)^{|\alpha|}\langle  Tf_1^Q, D^\alpha \phi_Q\rangle
	  	+ \sum_{Q\in  \mathcal{W}}(-1)^{|\alpha|}\langle Tf_2^Q, D^\alpha \phi_Q\rangle =  \sum_{Q\in  \mathcal{W}} (-1)^{|\alpha|}\langle  T_\Omega f, D^\alpha \phi_Q\rangle\\
	&	=   (-1)^{|\alpha|}\langle  T_\Omega f, D^\alpha \phi \rangle, 
\end{align*}
that is $g= D^\alpha T_\Omega f $ in the weak sense.
\end{proof}

\section{The Key Lemma}\label{seclemma}
To prove Theorem \ref{theoTP} we need the following lemma which says that it is equivalent to bound the transform of a function and its approximation by polynomials.

\begin{klemma}\label{lemklemma}
Let $\Omega$ be a Lipschitz domain, $\mathcal{W}$ an oriented Whitney covering associated to it (see Definition \ref{defOrientedWhitney}), $T$ a smooth convolution Calder\'on-Zygmund operator of order $n\in \N$ and $1<p<\infty$. Then the following statements are equivalent:
\begin{enumerate}[i)]
\item For every $f\in W^{n,p}(\Omega)$ one has
$$\norm{T_\Omega f}_{W^{n,p}(\Omega)}\leq C\norm{f}_{W^{n,p}(\Omega)},$$
where $C$ depends only on $n$, $p$, $T$ and the Lipschitz character of $\Omega$.
\item For every $f\in W^{n,p}(\Omega)$ one has
$$\sum_{Q\in\mathcal{W}} \norm{\nabla^n T_\Omega  (\mathbf{P}^{n-1}_{3Q} f)}_{L^p(Q)}^p\leq C\norm{f}^p_{W^{n,p}(\Omega)},$$
where $C$ depends only on $n$, $p$, $T$ and the Lipschitz character of $\Omega$.
\end{enumerate}
\end{klemma}

\begin{proof}
Given a multiindex $\alpha$ with $|\alpha|=n$, we will bound the difference 
\begin{equation}\label{eqcosaperacotar}
\sum_{Q\in\mathcal{W}} \norm {D^\alpha T_\Omega (f-\mathbf{P}^{n-1}_{3Q} f)}_{L^p(Q)}^p\lesssim \norm{\nabla^n f}^p_{L^p(\Omega)}.
\end{equation}

For each cube $Q\in \mathcal{W}$ we define a bump function $\varphi_Q \in C^\infty_c$ such that $\chi_{\frac32 Q}\leq \varphi_Q\leq \chi_{2Q}$ and $\norm{\nabla^j\varphi_Q}_\infty\approx \ell(Q)^{-j}$ for every $j\in\N$. Then we can break \rf{eqcosaperacotar} into local and non-local parts as follows:
\begin{align}\label{eqcosaperacotar12}
\nonumber \sum_{Q\in\mathcal{W}} \norm {D^\alpha T_\Omega (f-\mathbf{P}^{n-1}_{3Q} f)}_{L^p(Q)}^p
	& \lesssim \sum_{Q\in\mathcal{W}} \norm {D^\alpha T\left(\varphi_Q(f-\mathbf{P}^{n-1}_{3Q} f)\right)}_{L^p(Q)}^p \\
\nonumber	& \quad +\sum_{Q\in\mathcal{W}} \norm {D^\alpha T\left((\chi_\Omega-\varphi_Q)(f-\mathbf{P}^{n-1}_{3Q} f)\right)}_{L^p(Q)}^p\\
	& =\circled{1}+\circled{2}.
\end{align}

First of all we will show that the local term in \rf{eqcosaperacotar12} satisfies
\begin{equation}\label{eqcosaperacotar1}
\circled{1}=\sum_{Q\in\mathcal{W}} \norm {D^\alpha T\left(\varphi_Q(f-\mathbf{P}^{n-1}_{3Q} f)\right)}_{L^p(Q)}^p\lesssim \norm{\nabla^n f}^p_{L^p(\Omega)}.
\end{equation}
To do so, notice that $\varphi_Q(f-\mathbf{P}^{n-1}_{3Q} f)\in W^{n,p}(\R^d)$ and, by \rf{eqderivacommuta} and the boundedness of $T$ in $L^p$, 
\begin{align*}
\norm {D^\alpha T\left(\varphi_Q(f-\mathbf{P}^{n-1}_{3Q} f)\right)}_{L^p(Q)}^p 
	& \lesssim \norm{T}_{(p,p)}^p\norm{D^\alpha \left(\varphi_Q(f-\mathbf{P}^{n-1}_{3Q} f)\right)}_{L^p(\R^d)}^p \\
	& =C	\norm{D^\alpha \left(\varphi_Q(f-\mathbf{P}^{n-1}_{3Q} f)\right)}_{L^p(2Q)}^p ,
\end{align*}
where $\norm{\cdot}_{(p,p)}$ stands for the operator norm in $L^p (\R^d)$.
Using first the Leibnitz formula \rf{eqleibnitz}, and then using $j$ times the Poincar\'e inequality as in P\ref{itemPpoincare} from Lemma \ref{lempoly}, we get
\begin{align*}
\norm {D^\alpha T\left(\varphi_Q(f-\mathbf{P}^{n-1}_{3Q} f)\right)}_{L^p(Q)}^p 
	& \lesssim \sum_{j=1}^n \norm{\nabla^j\varphi_Q}_{L^\infty(2Q)}^p \norm{\nabla^{n-j}(f-\mathbf{P}^{n-1}_{3Q} f)}_{L^p(2Q)}^p \\
	& \lesssim \sum_{j=1}^n \frac1{\ell(Q)^{jp}} \ell(Q)^{jp} \norm{\nabla^n (f-\mathbf{P}^{n-1}_{3Q} f)}_{L^p(3Q)}^p = n \norm{\nabla^n f}_{L^p(3Q)}^p.
\end{align*}
Summing over all $Q$ we get \rf{eqcosaperacotar1}.

For the non-local part in  \rf{eqcosaperacotar12}, 
\begin{equation*}
\circled{2}=\sum_{Q\in\mathcal{W}} \norm {D^\alpha T\left((\chi_\Omega-\varphi_Q)(f-\mathbf{P}^{n-1}_{3Q} f)\right)}_{L^p(Q)}^p,
\end{equation*}
we will argue by duality. We can write
\begin{equation}\label{eqcosaperacotar21p}
\circled{2}^\frac1p=\sup_{\norm{g}_{L{p'}}\leq 1}\sum_{Q\in\mathcal{W}} \int_Q \left|D^\alpha T\left[(\chi_\Omega-\varphi_Q)(f-\mathbf{P}^{n-1}_{3Q} f)\right](x)\right| g(x) \, dx.
\end{equation}
Note that given $x\in Q$, by Lemma \ref{lemderivaT} one has
\begin{align*}
D^\alpha T[(\chi_\Omega-\varphi_Q) 
	& (f-\mathbf{P}^{n-1}_{3Q} f)] (x)  = \int_{\Omega} D^\alpha K (x-y) \left(1-\varphi_Q(y)\right)\left(f(y)-\mathbf{P}^{n-1}_{3Q} f(y)\right)dy.
\end{align*}
Taking absolute values and using Definition \ref{defCZK}, we can bound 
\begin{align}\label{eqvalorabsolut}
\nonumber |D^\alpha T[(\chi_\Omega-\varphi_Q) (f-\mathbf{P}^{n-1}_{3Q} f)] (x)|
 & \leq C_K \int_{\Omega\setminus \frac32 Q} \frac{|f(y)-\mathbf{P}^{n-1}_{3Q} f(y)|}{|x-y|^{n+d}}dy  \\
 & \lesssim \sum_{S\in\mathcal{W}} \frac{\norm{f-\mathbf{P}^{n-1}_{3Q} f}_{L^1(S)}}{D(Q,S)^{n+d}}.
\end{align}

By property P\ref{itemPchain} in Lemma \ref{lempoly} we have 
\begin{equation*}
\norm{f-\mathbf{P}^{n-1}_{3Q} f}_{L^1(S)}  \leq \sum_{P\in [S,Q]}\frac{\ell(S)^d D(P,S)^{n-1}}{\ell(P)^{d-1}}\norm{\nabla^n f}_{L^1(3P)},
\end{equation*}
so plugging this expression and \rf{eqvalorabsolut} into \rf{eqcosaperacotar21p}, we get
\begin{align*}
\circled{2}^\frac1p
	&\lesssim \sup_{\norm{g}_{{p'}}\leq 1}
\sum_{Q\in\mathcal{W}} \int_Q g(x) \, dx \sum_{S\in\mathcal{W}}\sum_{P\in [S,Q]}\frac{\ell(S)^d D(P,S)^{n-1}\norm{\nabla^n f}_{L^1(3P)}}{\ell(P)^{d-1} D(Q,S)^{n+d}}.
\end{align*}
Finally, we use that $P\in [S,Q]$ implies $\Dist(P,S)\lesssim \Dist(Q,S)$ (see Remark \ref{remDist}) to get
\begin{align*}
\circled{2}^\frac1p
	&\lesssim \sup_{\norm{g}_{{p'}}\leq 1}
\sum_{Q, S\in\mathcal{W}} \sum_{P\in [S,S_Q]} \int_Q g(x) \, dx \, \frac{\ell(S)^d \norm{\nabla^n f}_{L^1(3P)}}{\ell(P)^{d-1} D(Q,S)^{d+1}}\\
	& \quad + \sup_{\norm{g}_{{p'}}\leq 1}
\sum_{Q,S\in\mathcal{W}}\sum_{P\in [Q_S,Q]} \int_Q g(x) \, dx \, \frac{\ell(S)^d \norm{\nabla^n f}_{L^1(3P)}}{\ell(P)^{d-1} D(Q,S)^{d+1}}\\
	& = \circled{2.1}+\circled{2.2}.
\end{align*}

We consider first the term $\circled{2.1}$ where $P\in [S,S_Q]$ and, thus, by Remark \ref{remDist} the long distance $\Dist(Q,S)\approx \Dist(P,Q)$. Rearranging the sum, 
\begin{align*}
\circled{2.1}
	&\lesssim \sup_{\norm{g}_{{p'}}\leq 1}
 \sum_{P\in\mathcal{W}}  \frac{\norm{\nabla^n f}_{L^1(3P)}}{\ell(P)^{d-1}} \sum_{Q\in\mathcal{W}}  \frac{\int_Q g(x) \, dx}{D(Q,P)^{d+1}}  \sum_{S\leq P} \ell(S)^d.
\end{align*}
By Lemma \ref{lemmaximal}, 
$$\sum_{S\leq P} \ell(S)^d \approx \ell(P)^d,$$
and
$$ \sum_{Q\in\mathcal{W}}  \frac{\int_Q g(x) \, dx}{D(Q,P)^{d+1}}\lesssim \frac{\inf_{x\in3P} Mg(x)}{\ell(P)}.$$

Next we perform a similar argument with $\circled{2.2}$. Note that when $P\in[Q,Q_S]$, we have $\Dist(Q,S) \approx \Dist(P,S)$, leading to
\begin{align*}
\circled{2.2}
	&\lesssim \sup_{\norm{g}_{{p'}}\leq 1}
 \sum_{P\in\mathcal{W}}  \frac{\norm{\nabla^n f}_{L^1(3P)}}{\ell(P)^{d-1}} \sum_{Q\leq P}  \int_Q g(x) \, dx  \sum_{S} \frac{\ell(S)^d}{D(P,S)^{d+1}}.
\end{align*}
By Lemma \ref{lemmaximal}, 
$$\sum_{Q\leq P}  \int_Q g(x) \, dx \lesssim  \inf_{x\in3P} Mg(x)\, \ell(P)^d,$$
and
$$ \sum_{S}  \frac{\ell(S)^d}{D(P,S)^{d+1}}\approx \frac{1}{\ell(P)}.$$
Thus, 
\begin{align*}
\circled{2.1}+\circled{2.2}
	&\lesssim \sup_{\norm{g}_{{p'}}\leq 1}
 \sum_{P\in\mathcal{W}}  \frac{\norm{\nabla^n f}_{L^1(3P)}}{\ell(P)^{d-1}} \frac{\inf_{3P} Mg}{\ell(P)} \ell(P)^d \lesssim \sup_{\norm{g}_{{p'}}\leq 1}
 \sum_{P\in\mathcal{W}} \norm{\nabla^n f \cdot Mg}_{L^1(3P)}
\end{align*}
and, by H\"older inequality and the boundedness of the Hardy-Littlewood maximal operator in $L^{p'}$, 
\begin{align*}
\circled{2}^\frac{1}{p}
	&\lesssim \left(\sum_{P\in\mathcal{W}} \norm{\nabla^n f}_{L^p(3P)}^p\right)^{1/p} \left(\sup_{\norm{g}_{{p'}}\leq 1}
 \sum_P \norm{Mg}_{L^{p'}(3P)}^{p'}\right)^{1/p'} \lesssim \norm{\nabla^n f}_{L^p(\Omega)} .
\end{align*}
\end{proof}

\section{Proof of Theorem \ref{theoTP}}\label{secmain}

\begin{proof}
The implication $a) \Rightarrow b)$ is trivial.

To see the converse, fix a point $x_0 \in \Omega$. We have a finite number of monomials $P_\lambda(x)=(x-x_0)^\lambda$ for multiindices $\lambda\in\N^d$ and $|\lambda|<n$, so the hypothesis can be written as 
\begin{equation}\label{eqTP}
\|T_\Omega (P_\lambda)\|_{W^{n,p}(\Omega)}\leq C.
\end{equation} 

Assume $f\in W^{n,p}(\Omega)$. By the Key Lemma, we have to prove that 
$$\sum_{Q\in\mathcal{W}}\| \nabla^n T_\Omega (\mathbf{P}^{n-1}_{3Q} f)\|^p_{L^p(Q)}\lesssim \|f\|^p_{W^{n,p}(\Omega)}.$$
We can write the polynomials 
$$\mathbf{P}^{n-1}_{3Q} f(x)= \sum_{|\gamma|<n}m_{Q,\gamma}(x-x_Q)^\gamma ,$$
where $x_Q$ stands for the center of each cube $Q$. Taking the Taylor expansion in $x_0$ for each monomial, one has
$$\mathbf{P}^{n-1}_{3Q} f(x)= \sum_{|\gamma|<n}m_{Q,\gamma} \sum_{\vec{0}\leq\lambda\leq \gamma}\binom{\gamma}{\lambda} (x-x_0)^\lambda (x_0-x_Q)^{\gamma-\lambda}.$$
Thus, 
\begin{equation}\label{eqderivadadeT}
\nabla^n T_\Omega (\mathbf{P}^{n-1}_{3Q} f)(y)=\sum_{|\gamma|<n}m_{Q,\gamma} \sum_{\vec{0}\leq\lambda\leq \gamma}\binom{\gamma}{\lambda} (x_0-x_Q)^{\gamma-\lambda} \nabla^n(T_\Omega P_\lambda)(y).
\end{equation}
Recall the property P\ref{itemMGammaAcotat} in Lemma \ref{lempoly}, which states that 
\begin{equation}\label{eqacotacoeficients}
|m_{Q,\gamma}| \leq C  \sum_{j=|\gamma|}^{n-1} \norm{\nabla^{j} f}_{L^\infty(3Q)} \ell(Q)^{j-|\gamma|}\lesssim\sum_{j=|\gamma|}^{n-1} \norm{\nabla^{j} f}_{L^\infty(\Omega)} \diam \Omega^{j-|\gamma|}.
\end{equation}
Raising \rf{eqderivadadeT} to the power $p$, integrating in $Q$ and using \rf{eqacotacoeficients} we get
\begin{align*}
\norm{\nabla^n T_\Omega (\mathbf{P}^{n-1}_{3Q} f)}^p_{L^p(Q)}
	& \lesssim \sum_{j<n}\norm{\nabla^{j} f}_{L^\infty(\Omega)}^p \sum_{|\lambda|<j} \diam \Omega^{(j-|\lambda|)p}\norm{\nabla^n(T_\Omega P_\lambda)}_{L^p(Q)}^p.
\end{align*}

By the Sobolev Embedding Theorem, we know that $\norm{\nabla^j f}_{L^\infty(\Omega)}\leq C\norm{\nabla^j f}_{W^{1,p}(\Omega)}$ as long as $p>d$. If we add with respect to $Q\in \mathcal{W}$ and we use \rf{eqTP} we get
\begin{align*}
\sum_{Q\in\mathcal{W}}\norm{\nabla^n T_\Omega (\mathbf{P}^{n-1}_{3Q} f)}^p_{L^p(Q)}
	& \lesssim \sum_{j<n}\norm{\nabla^{j} f}_{W^{1,p}(\Omega)}^p \sum_{|\lambda|<j}\norm{\nabla^n(T_\Omega P_\lambda)}_{L^p(\Omega)}^p \lesssim \norm{f}_{W^{n,p}(\Omega)}^p ,
\end{align*}
with constants depending on the diameter of $\Omega$, $p$, $d$ and $n$. 
\end{proof}

\section{Carleson measures}\label{seccarleson}
Theorem \ref{theoTP} provides us with a nice tool to check if an operator is bounded in $W^{n,p}(\Omega)$ as long as $p>d$. Our concern for this section is to find a sufficient condition valid even if $p\leq d$. We want this condition to be related to some test functions (the polynomials of degree smaller than $n$ seem the right choice) but somewhat more specific than the condition in the Key Lemma. In particular we seek for some Carleson condition in the spirit of the celebrated article \cite{ars} by N. Arcozzi, R. Rochberg and E. Sawyer. In the next section we will check that, when we consider only the first derivative, that is for $W^{1,p}(\Omega)$, the sufficient condition below is in fact necessary. 

To use their techniques we need to have some tree structure coherent with the shadows of the cubes. We will use a local version of the Key Lemma in order to get rid of some technical difficulties:

\begin{lemma}\label{lemklemma2}
Let $\Omega\subset \R^d$ be a Lipschitz domain, $T$ a smooth convolution Calder\'on-Zygmund operator of order $n\in \N$ and $1<p<\infty$. Then the following statements are equivalent.
\begin{enumerate}[i)]
\item For every $f\in W^{n,p}(\Omega)$ one has
\begin{equation}\label{eqlema2.1}
\norm{T_\Omega f}_{W^{n,p}(\Omega)}\leq C\norm{f}_{W^{n,p}(\Omega)}.
\end{equation}
\item For every window $\mathcal{Q}$ and every $f\in W^{n,p}(\Omega)$ with $f|_{({\delta_0}\mathcal{Q})^c}\equiv 0$ one has
\begin{equation*}
\sum_{Q\in\mathcal{W}_\mathcal{Q}} \norm{\nabla^n T_\Omega  (\mathbf{P}^{n-1}_{3Q} f)}_{L^p(Q)}^p\leq C\norm{f}^p_{W^{n,p}(\Omega)},
\end{equation*}
where the whitney covering $\mathcal{W}_\mathcal{Q}$ is properly oriented with respect to $\mathcal{Q}$, that is, with the dyadic grid parallel to the local coordinates (see Definition \ref{defOrientedWhitney}). 
\end{enumerate}
\end{lemma}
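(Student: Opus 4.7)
The implication $(i) \Rightarrow (ii)$ is immediate from the Key Lemma \ref{lemklemma} applied to the properly oriented Whitney covering $\mathcal{W}_\mathcal{Q}$: the sum in (ii), being a restriction of the full Key Lemma sum to a subset of cubes, is controlled a fortiori by $\|f\|^p_{W^{n,p}(\Omega)}$ for every $f \in W^{n,p}(\Omega)$, hence in particular for those supported in $\delta_0 \mathcal{Q}$.

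For the converse, my plan is a partition-of-unity reduction. Since the windows $\{\mathcal{Q}_k\}_{k=1}^N$ cover a neighborhood of $\partial\Omega$ via \rf{eqwindowscontain}, I select a smooth partition of unity $\{\phi_k\}_{k=0}^N$ on $\overline\Omega$ subordinate to $\{\delta_0 \mathcal{Q}_k\}_{k=1}^N$ together with an auxiliary central set $\Omega_0 \Subset \Omega$ compactly contained in $\Omega$, with every derivative $\nabla^j \phi_k$ uniformly bounded by a constant depending only on $R$ and $j$. Writing $f = \sum_k f\phi_k =: \sum_k f_k$, the goal reduces, via Leibniz' rule and the finiteness of $\{\phi_k\}$ which yield $\sum_k \|f_k\|_{W^{n,p}(\Omega)} \lesssim \|f\|_{W^{n,p}(\Omega)}$, to proving $\|T_\Omega f_k\|_{W^{n,p}(\Omega)} \lesssim \|f_k\|_{W^{n,p}(\Omega)}$ for each individual $k$.

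For $k \geq 1$, hypothesis (ii) applied to $f_k$ with the covering $\mathcal{W}_{\mathcal{Q}_k}$ yields a bound on $\sum_{Q \in \mathcal{W}_{\mathcal{Q}_k}} \|\nabla^n T_\Omega \mathbf{P}^{n-1}_{3Q} f_k\|^p_{L^p(Q)}$. Writing $\nabla^n T_\Omega f_k = \nabla^n T_\Omega \mathbf{P}^{n-1}_{3Q} f_k + \nabla^n T_\Omega(f_k - \mathbf{P}^{n-1}_{3Q} f_k)$ and summing cube-by-cube with the triangle inequality gives $\|\nabla^n T_\Omega f_k\|^p_{L^p(\Omega)}$ controlled by that quantity plus the sum $\sum_Q \|\nabla^n T_\Omega(f_k - \mathbf{P}^{n-1}_{3Q} f_k)\|^p_{L^p(Q)}$. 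The latter is precisely the quantity \rf{eqcosaperacotar} from the Key Lemma's proof, which is bounded by $\|\nabla^n f_k\|^p_{L^p(\Omega)}$ \emph{unconditionally} — only the Calder\'on-Zygmund structure of $T$ together with Poincar\'e estimates and the chain structure on the covering are used there, and hypothesis (ii) plays no role. The $L^p$ boundedness of $T$ takes care of the lower-order part of the Sobolev norm. The case $k = 0$ is handled directly by extending $f_0$ by zero to $\R^d$ and invoking the global $W^{n,p}(\R^d)$ boundedness of $T$ recorded in Remark \ref{remsobolevglobal}; this suffices because $f_0$ is compactly supported away from $\partial\Omega$, so $T_\Omega f_0 = Tf_0$ on $\Omega$.

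The main subtlety I foresee is precisely that verification: confirming that the inequality \rf{eqcosaperacotar} is genuinely an unconditional estimate, proved independently of any boundedness hypothesis on $T_\Omega$. A re-reading of Section \ref{seclemma} confirms this, since the Key Lemma's argument is organized exactly to first establish \rf{eqcosaperacotar} by direct estimates on the local and non-local pieces, and only afterwards convert between statements (i) and (ii) using it. The rest of the proof is bookkeeping.
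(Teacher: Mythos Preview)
Your proposal is correct and follows essentially the same approach as the paper: a partition-of-unity reduction into a compactly supported central piece (handled by the global $W^{n,p}(\R^d)$ boundedness of $T$) and window-localized pieces (handled by running the Key Lemma argument for each properly oriented covering). You have correctly identified the crucial point, namely that the estimate \rf{eqcosaperacotar} is established in Section~\ref{seclemma} unconditionally from the Calder\'on--Zygmund structure alone, so it may be combined with hypothesis~(ii) to close the argument for each $f_k$.
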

\begin{proof}[Sketch of the proof]
To see that \textit{i)} implies \textit{ii)} just use the Key Lemma with an appropriate dyadic grid. 

To see the converse, one can choose a finite a collection of windows $\{\mathcal{Q}_k\}_{k=1}^N$ with $N\approx \mathcal{H}^{d-1}(\partial\Omega) / R^{d-1}$ such that $\frac{\delta_0}{c_0}\mathcal{Q}_k$ is a covering of the boundary of $\Omega$, call $\mathcal{Q}_0$ to the inner region $\Omega \setminus \bigcup \frac{{\delta_0}}{2}\mathcal{Q}_k$, and let $\{\psi_k\}\subset C^\infty$ be a partition of the unity related to the covering $ \{\mathcal{Q}_0\} \cup \{{\delta_0}\mathcal{Q}_k\}_{k=1}^N$.
Consider a function $f\in W^{n,p}(\Omega)$. Notice that our hypothesis does not give information about the inner region, but since $\psi_0$ is compactly supported in $\Omega$, $\psi_0 f\in W^{n,p}(\R^d)$ and by Remark \ref{remsobolevglobal} also $T(\psi_0 f)\in W^{n,p}(\R^d)$, so
$$\norm{T_\Omega(\psi_0 f)}_{W^{n,p}(\Omega)}=\norm{T(\psi_0 f)}_{W^{n,p}(\Omega)}\leq \norm{T(\psi_0 f)}_{W^{n,p}(\R^d)}\leq C\norm{\psi_0 f}_{W^{n,p}(\Omega)}.$$
Now, following the proof for the Key Lemma but replacing $f$ by $\psi_k f$ and using an appropriate Whitney covering for every single window, one gets 
$$\norm{T_\Omega (\psi_k f)}_{W^{n,p}(\Omega)}\leq C \norm{\psi_k f}_{W^{n,p}(\Omega)}.$$
Thus, 
$$\norm{T_\Omega f}_{W^{n,p}(\Omega)}\leq \sum_{k=0}^N \norm{T_\Omega (\psi_k f)}_{W^{n,p}(\Omega)} \leq C\sum_{k=0}^N \norm{\psi_k f}_{W^{n,p}(\Omega)}.$$
Choosing $\psi_k$ as bump functions with the usual estimates on the derivatives $\norm{\nabla^j \psi_k}_{L^\infty}\lesssim R^{-j}$, one can get \rf{eqlema2.1} using the Leibnitz formula.
\end{proof}

Next we recall some useful results from \cite{ars}. First we need to introduce some notation.

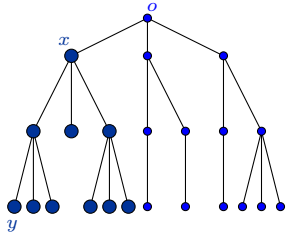
\begin{figure}[h]
\center
\begin{tikzpicture}[line cap=round,line join=round,>=triangle 45,x=0.5cm,y=0.5cm]
\clip(-3.01,0.28) rectangle (5.18,6.62);
\draw (1,6)-- (-1,5);
\draw (1,6)-- (1,5);
\draw (1,6)-- (3,5);
\draw (3,5)-- (4,3);
\draw (4,3)-- (4.5,1);
\draw (4,3)-- (4,1);
\draw (4,3)-- (3.5,1);
\draw (3,1)-- (3,3);
\draw (3,3)-- (3,5);
\draw (2,3)-- (1,5);
\draw (2,1)-- (2,3);
\draw (1,3)-- (1,5);
\draw (1,3)-- (1,1);
\draw (0.5,1)-- (0,3);
\draw (0,3)-- (0,1);
\draw (-0.5,1)-- (0,3);
\draw (0,3)-- (-1,5);
\draw (-1,3)-- (-1,5);
\draw (-1,5)-- (-2,3);
\draw (-2,3)-- (-2.5,1);
\draw (-2,3)-- (-2,1);
\draw (-2,3)-- (-1.5,1);
\begin{scriptsize}
\draw [fill=qqqqff] (1,6) circle (1.5pt);
\draw[color=qqqqff] (1.12,6.3) node {$o$};
\draw [fill=qqttzz] (-1,5) circle (2.5pt);
\draw[color=qqttzz] (-1.19,5.4) node {$x$};
\draw [fill=qqqqff] (1,5) circle (1.5pt);
\draw [fill=qqqqff] (3,5) circle (1.5pt);
\draw [fill=qqttzz] (-2,3) circle (2.5pt);
\draw [fill=qqttzz] (-1,3) circle (2.5pt);
\draw [fill=qqttzz] (0,3) circle (2.5pt);
\draw [fill=qqqqff] (1,3) circle (1.5pt);
\draw [fill=qqqqff] (2,3) circle (1.5pt);
\draw [fill=qqqqff] (3,3) circle (1.5pt);
\draw [fill=qqqqff] (4,3) circle (1.5pt);
\draw [fill=qqttzz] (-2.5,1) circle (2.5pt);
\draw[color=qqttzz] (-2.56,0.5) node {$y$};
\draw [fill=qqttzz] (-2,1) circle (2.5pt);
\draw [fill=qqttzz] (-1.5,1) circle (2.5pt);
\draw [fill=qqttzz] (-0.5,1) circle (2.5pt);
\draw [fill=qqttzz] (0,1) circle (2.5pt);
\draw [fill=qqttzz] (0.5,1) circle (2.5pt);
\draw [fill=qqqqff] (1,1) circle (1.5pt);
\draw [fill=qqqqff] (2,1) circle (1.5pt);
\draw [fill=qqqqff] (3,1) circle (1.5pt);
\draw [fill=qqqqff] (3.5,1) circle (1.5pt);
\draw [fill=qqqqff] (4,1) circle (1.5pt);
\draw [fill=qqqqff] (4.5,1) circle (1.5pt);
\end{scriptsize}
\end{tikzpicture}
\caption{$y \in \mathbf{Sh}_{\mathcal{T}}(x)$.}\label{figtree}
\end{figure}

\begin{definition}
We say that a connected, loopless graph $\mathcal{T}$ is a tree, and we will fix a vertex $o\in \mathcal{T}$ and call it its root. This choice induces a partial order in $\mathcal{T}$, given by $x \geq y$ if $x\in [o,y]$ where $[o,y]$ stands for the geodesic path uniting those two vertices of the graph (see Figure \ref{figtree}). We call shadow of $x$ in $\mathcal{T}$ to the collection
$${\mathbf{Sh}}_\mathcal{T}(x)=\{y \in \mathcal{T}: y\leq x\}.$$

We say that a function $\rho : \mathcal{T} \to \R$ is a weight if it takes positive values (by a function we mean a function defined in the vertices of the tree).
\end{definition}

\begin{rem}
Note that in \cite{ars} the notation is $\leq$ instead of $\geq$. We use the latter to be consistent with the tree structure of the Whitney covering that we introduce below.
\end{rem}

\begin{definition}
Given $h : \mathcal{T} \to \R$, we call the primitive $\mathcal{I}h$ the function
$$\mathcal{I}h (y)= \sum_{x\in [o, y]} h (x).$$
\end{definition}

\begin{theorem}\cite[Theorem 3]{ars}\label{theoars}
Let $1<p<\infty$ and let $\rho$ be a weight on $\mathcal{T}$. For a nonnegative measure $\mu$ on $\mathcal{T}$, the following statements are equivalent:
\begin{enumerate}[i)]
\item There exists a constant $C=C(\mu)$ such that
$$\norm{\mathcal{I}h}_{L^p(\mu)}\leq C \norm{h}_{L^p(\rho)}$$
\item There exists a constant $C=C(\mu)$ such that for every $r\in \mathcal{T}$ one has
$$\sum_{x\in {\mathbf{Sh}}_\mathcal{T}(r)} \left( \sum_{y\in {\mathbf{Sh}}_\mathcal{T}(x)}\mu(y)\right)^{p'}\rho(x)^{1-p'}\leq C \sum_{x\in {\mathbf{Sh}}_\mathcal{T}(r)}\mu(x).$$ 
\end{enumerate}
\end{theorem}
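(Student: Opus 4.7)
My plan is to prove both implications of Theorem \ref{theoars} by first dualizing (i) and then handling the easy and hard directions separately. The discrete pairing $\langle\mathcal{I}h,g\mu\rangle=\langle h,\mathcal{I}^{*}(g\mu)\rangle$, where
\[
\mathcal{I}^{*}(g\mu)(x):=\sum_{y\in\mathbf{Sh}_\mathcal{T}(x)}g(y)\mu(y),
\]
rewrites (i) as the equivalent weighted inequality
\[
\sum_{x\in\mathcal{T}}\bigl(\mathcal{I}^{*}(g\mu)(x)\bigr)^{p'}\rho(x)^{1-p'}\;\leq\;C^{p'}\sum_{y\in\mathcal{T}}g(y)^{p'}\mu(y)
\]
for every nonnegative $g$; this is the form both implications will target.

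For (i) implies (ii), I would simply test the dual inequality on $g=\chi_{\mathbf{Sh}_\mathcal{T}(r)}$. Since $\mathbf{Sh}_\mathcal{T}(x)\cap\mathbf{Sh}_\mathcal{T}(r)=\mathbf{Sh}_\mathcal{T}(x)$ whenever $x\in\mathbf{Sh}_\mathcal{T}(r)$, the restricted sum over $x\in\mathbf{Sh}_\mathcal{T}(r)$ reproduces the left-hand side of (ii) verbatim, while the right-hand side collapses to $\mu(\mathbf{Sh}_\mathcal{T}(r))$; the remaining terms with $x\geq r$ are nonnegative and can simply be discarded. No further structure on $\mathcal{T}$ is needed for this direction.

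For (ii) implies (i), my plan is to prove the dual inequality by a Calder\'on--Zygmund stopping-time construction on $\mathcal{T}$. Introduce the tree maximal function
\[
M_\mu g(x):=\sup_{y\geq x}\frac{\mathcal{I}^{*}(g\mu)(y)}{\mu(\mathbf{Sh}_\mathcal{T}(y))},
\]
which is of weak type $(1,1)$ with respect to $\mu$ via a Vitali-type antichain covering and therefore bounded on $L^{p'}(\mu)$ for every $p'>1$. For each $k\in\mathbb{Z}$, select a maximal antichain $\{r_{j}^{k}\}_{j}$ of vertices at which the dyadic averages of $g$ first exceed $2^{k}$, and form the coronas $E_{j,k}=\mathbf{Sh}_\mathcal{T}(r_{j}^{k})\setminus\bigcup_{i}\mathbf{Sh}_\mathcal{T}(r_{i}^{k+1})$, which partition the tree. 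On each $E_{j,k}$, the stopping rule forces $\mathcal{I}^{*}(g\mu)(x)\leq 2^{k+1}\mu(\mathbf{Sh}_\mathcal{T}(x))$, so splitting the dual left-hand side over the coronas and applying hypothesis (ii) at each root $r_{j}^{k}$ yields
\[
\sum_{x\in\mathcal{T}}\bigl(\mathcal{I}^{*}(g\mu)(x)\bigr)^{p'}\rho(x)^{1-p'}\;\lesssim\;\sum_{k,j}2^{(k+1)p'}\mu\bigl(\mathbf{Sh}_\mathcal{T}(r_{j}^{k})\bigr).
\]
A standard layer-cake rearrangement identifies the resulting sum with $\norm{M_\mu g}_{L^{p'}(\mu)}^{p'}$ up to an absolute constant, and the maximal inequality closes the estimate.

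The main obstacle will be justifying the corona bound $\mathcal{I}^{*}(g\mu)(x)\leq 2^{k+1}\mu(\mathbf{Sh}_\mathcal{T}(x))$ on $E_{j,k}$ with full rigor: since $\mathcal{I}^{*}(g\mu)(x)=\sum_{y\leq x}g(y)\mu(y)$ also collects contributions from descendants of $x$ that lie in deeper coronas $E_{j',k'}$ with $k'>k$, one must decompose that tail by generation $k'$ and absorb it into a geometric series in $k'-k$, then reapply the testing hypothesis at the corresponding deeper roots. Once this bookkeeping is arranged so that successive generations really do double the averages, the absorption is clean and the two-sided testing characterization follows; every other ingredient (the $L^{p'}(\mu)$-boundedness of $M_\mu$ and the layer-cake identity for its norm) is entirely standard on trees.
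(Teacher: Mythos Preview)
The paper does not prove this theorem at all: it is quoted verbatim as \cite[Theorem 3]{ars} and used as a black box, so there is no ``paper's own proof'' to compare against. Your sketch is the standard Sawyer-type argument and is essentially correct; the dualization and the testing direction (i)$\Rightarrow$(ii) are clean and complete as written.

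One remark on the hard direction. The ``main obstacle'' you single out is not actually an obstacle: if the stopping vertices $\{r_i^{k+1}\}$ are defined as the \emph{maximal} vertices with average exceeding $2^{k+1}$, then any $x\in E_{j,k}$ is by construction not dominated by any $r_i^{k+1}$, hence $x$ itself has average at most $2^{k+1}$, i.e.\ $\mathcal{I}^{*}(g\mu)(x)\le 2^{k+1}\mu(\mathbf{Sh}_\mathcal{T}(x))$ directly. There is no need to decompose the tail over deeper generations or run an absorption argument; the inequality is immediate from maximality at level $k+1$. With that simplification, the rest of your outline (applying (ii) at each $r_j^k$, then summing via the layer-cake identity for $M_\mu$) goes through without further difficulty.
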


For every $1\leq p\leq\infty$, we say that a non-negative measure $\mu$ is a $p$-Carleson measure for $(\mathcal{I}, \rho, p)$ if there exists a constant $C=C(\mu)$ such that the condition \textit{i)} is satisfied.

Given an $R$-window $\mathcal{Q}$ of a Lipschitz domain $\Omega$ with a properly oriented Whitney covering $\mathcal{W}$, for every $x \in  \mathcal{Q}$, we write $x=(x', x_d)\in \R^{d-1}\times\R$ and, if $x$ is contained in  a Whitney cube $Q \in \mathcal{W}$, we define the shadow of $x$ as
$${\mathbf{Sh}}(x)=\left\{y\in \mathcal{Q}\cap \Omega : \, y_d<x_d \mbox{ and } \norm{x'-y'}_\infty\leq \frac12\ell(Q)\right\}.$$
Note that if $x$ is the center of the upper $(n-1)$-dimensional face of $Q$, the vertical projection of ${\mathbf{Sh}}(x)$ (which is a $(n-1)$-dimensional square) coincides with the vertical projection of $Q$ (see Figure \ref{figshadowQ}). Finally, we define the vertical extension of ${\mathbf{Sh}}(x)$, 
$$\widetilde {\mathbf{Sh}}(x)=\left\{y\in \mathcal{Q}\cap \Omega  : \, y_d< x_d+2\ell(Q) \mbox{ and } \norm{x'-y'}_\infty \leq\frac12 \ell(Q) \right\}.$$

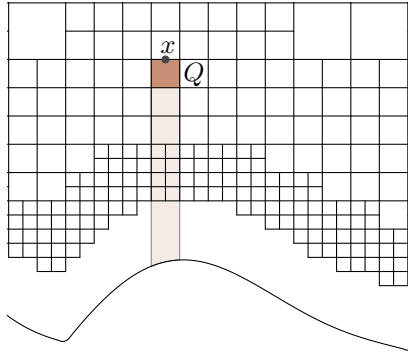
\begin{figure}[hb]
\center
\begin{tikzpicture}[line cap=round,line join=round,>=triangle 45,x=0.75cm,y=0.75cm]
\clip(-1.02,-0.51) rectangle (6,6);
\draw[help lines,fill=zzttqq,fill opacity=0.1] {[smooth,samples=50,domain=1.5:2.0] plot(\x,{sin((\x)*180/pi)/2+abs((\x-3)^2-9)/8})} -- (2,5) {[smooth,samples=50,domain=2.0:1.5] -- plot(\x,{5})} -- (1.5,1.34) -- cycle;
\draw[smooth,samples=100,domain=-1.6881871921415879:6.866443890546169] plot(\x,{sin(((\x))*180/pi)/2+abs(((\x)-3)^2-9)/8});
\fill[line width=0pt,color=zzttqq,fill=zzttqq,fill opacity=0.5] (1.5,4.5) -- (2,4.5) -- (2,5) -- (1.5,5) -- cycle;
\draw (1.9,5.1) node[anchor=north west] {$Q$};
\draw (-3,4.5)-- (-3,6);
\draw (-2.75,5.5)-- (-2.75,4);
\draw (-2.5,6)-- (-2.5,3.5);
\draw (-2.25,3)-- (-2.25,4.5);
\draw (-2,2.75)-- (-2,7);
\draw (-1.75,2.25)-- (-1.75,4);
\draw (-1.5,6)-- (-1.5,2);
\draw (-1,7)-- (-1,1.5);
\draw (-1.25,1.75)-- (-1.25,3);
\draw (-0.75,1.5)-- (-0.75,2.5);
\draw (-0.5,1.25)-- (-0.5,5);
\draw (-0.25,1.25)-- (-0.25,2.5);
\draw (0,1.25)-- (0,4);
\draw (0.25,1.5)-- (0.25,3);
\draw (0.5,1.75)-- (0.5,4);
\draw (0.75,2)-- (0.75,3.5);
\draw (1,2.25)-- (1,4);
\draw (1.25,2.25)-- (1.25,3.5);
\draw (1.5,2.5)-- (1.5,4);
\draw (1.75,2.5)-- (1.75,3.5);
\draw (2,2.5)-- (2,4.5);
\draw (2.25,2.5)-- (2.25,3.5);
\draw (2.5,2.5)-- (2.5,4.5);
\draw (2.75,2.5)-- (2.75,3.5);
\draw (3,2.25)-- (3,4.5);
\draw (3.25,2.25)-- (3.25,3.5);
\draw (3.5,2)-- (3.5,4.5);
\draw (4,1.75)-- (4,4.5);
\draw (4.25,1.5)-- (4.25,3);
\draw (4.5,1.5)-- (4.5,3.5);
\draw (4.75,1.25)-- (4.75,2.5);
\draw (5,1.25)-- (5,3.5);
\draw (5.25,1.25)-- (5.25,2.5);
\draw (5.5,1)-- (5.5,3.5);
\draw (5.75,1)-- (5.75,2);
\draw (6,1)-- (6,3.5);
\draw (6.25,1.25)-- (6.25,3);
\draw (6.5,1.75)-- (6.5,3.5);
\draw (6.75,2)-- (6.75,3.5);
\draw (7,2.25)-- (7,4.5);
\draw (7,4.5)-- (7,7);
\draw (6.5,6)-- (6.5,3.5);
\draw (6,3.5)-- (6,6);
\draw (5.5,5)-- (5.5,3.5);
\draw (5,6)-- (5,3.5);
\draw (5,6)-- (5,7);
\draw (6,6)-- (6,7);
\draw (5,6)-- (7,6);
\draw (5,7)-- (7,7);
\draw (7,5.5)-- (6,5.5);
\draw (7,5)-- (5,5);
\draw (7,4.5)-- (5,4.5);
\draw (5,4)-- (7,4);
\draw (7,3.5)-- (5,3.5);
\draw (7,3.25)-- (6.5,3.25);
\draw (5,3)-- (7,3);
\draw (7,2.75)-- (6,2.75);
\draw (7,2.5)-- (5,2.5);
\draw (7,2.25)-- (6,2.25);
\draw (6.75,2)-- (5,2);
\draw (5.5,2.25)-- (5,2.25);
\draw (6.5,1.75)-- (5,1.75);
\draw (6.25,1.5)-- (5,1.5);
\draw (6.25,1.25)-- (5,1.25);
\draw (6,1)-- (5.5,1);
\draw (5,1.25)-- (4.75,1.25);
\draw (5,1.5)-- (4.25,1.5);
\draw (5,1.75)-- (4,1.75);
\draw (5,2)-- (3.5,2);
\draw (5,2.25)-- (3,2.25);
\draw (5,2.5)-- (2.5,2.5);
\draw (4.5,2.75)-- (2.5,2.75);
\draw (4.5,3)-- (2.5,3);
\draw (3.5,3.25)-- (2.5,3.25);
\draw (4.5,3.5)-- (2.5,3.5);
\draw (5,3)-- (4.5,3);
\draw (5,3.5)-- (4.5,3.5);
\draw (5,4)-- (2.5,4);
\draw (5,4.5)-- (2.5,4.5);
\draw (5,5)-- (2.5,5);
\draw (4.5,3.5)-- (4.5,5);
\draw (4,5)-- (4,4.5);
\draw (4,5)-- (4,7);
\draw (5,7)-- (3,7);
\draw (2.5,6)-- (4,6);
\draw (4,5.5)-- (2.5,5.5);
\draw (3.5,4.5)-- (3.5,6);
\draw (3,4.5)-- (3,7);
\draw (2.5,4.5)-- (2.5,6);
\draw (3,7)-- (1,7);
\draw (2,7)-- (2,4.5);
\draw (2.5,6)-- (0.5,6);
\draw (2.5,5.5)-- (0.5,5.5);
\draw (2.5,5)-- (0.5,5);
\draw (2.5,4.5)-- (0.5,4.5);
\draw (2.5,4)-- (0.5,4);
\draw (2.5,3.5)-- (0.5,3.5);
\draw (2.5,3.25)-- (0.5,3.25);
\draw (1.5,4)-- (1.5,6);
\draw (1,7)-- (1,4);
\draw (3.75,3)-- (3.75,2);
\draw (2.5,2.5)-- (0,2.5);
\draw (2.5,2.75)-- (0,2.75);
\draw (2.5,3)-- (0,3);
\draw (0.5,3.5)-- (0,3.5);
\draw (0.5,4)-- (0,4);
\draw (1.25,2.25)-- (0,2.25);
\draw (0.75,2)-- (-1,2);
\draw (0.5,1.75)-- (-1,1.75);
\draw (0.25,1.5)-- (-1,1.5);
\draw (0,1.25)-- (-0.5,1.25);
\draw (0,2.25)-- (-1.25,2.25);
\draw (0,2.5)-- (-1.25,2.5);
\draw (0,3)-- (-1,3);
\draw (0,3.5)-- (-1,3.5);
\draw (0,4)-- (-1,4);
\draw (0.5,4)-- (0.5,6);
\draw (0,4)-- (0,7);
\draw (-1,6)-- (0.5,6);
\draw (0.5,5.5)-- (0,5.5);
\draw (0.5,5)-- (-1,5);
\draw (0.5,4.5)-- (-1,4.5);
\draw (-1,7)-- (1,7);
\draw (-1.25,1.75)-- (-1,1.75);
\draw (-1,2)-- (-1.5,2);
\draw (-1.25,2.25)-- (-1.75,2.25);
\draw (-1.25,2.5)-- (-1.75,2.5);
\draw (-1,2.75)-- (-2,2.75);
\draw (-1,3)-- (-2.25,3);
\draw (-1.5,3.25)-- (-2.25,3.25);
\draw (-1,3.5)-- (-2.5,3.5);
\draw (-1.5,3.75)-- (-2.5,3.75);
\draw (-1,4)-- (-2.75,4);
\draw (-2,4.25)-- (-2.75,4.25);
\draw (-1,4.5)-- (-3,4.5);
\draw (-3,4.75)-- (-2.5,4.75);
\draw (-1.5,5)-- (-3,5);
\draw (-2.5,5.25)-- (-3,5.25);
\draw (-1.5,5.5)-- (-3,5.5);
\draw (-1,6)-- (-3,6);
\draw (-1,5.5)-- (-1.5,5.5);
\draw (-1,5)-- (-1.5,5);
\draw (5,6)-- (4,6);
\draw (1.5,5.48) node[anchor=north west] {$x$};
\begin{scriptsize}
\fill [color=uuuuuu] (1.75,5) circle (1.5pt);
\end{scriptsize}
\end{tikzpicture}
\caption{The shadows $\mathbf{Sh}(x)$ and $\mathbf{Sh}(Q)$ coincide when $x$ is the center of the upper face of the cube. Furthermore, $P\subset \mathbf{Sh}(Q)$ if and only if $P\in \mathbf{Sh}_\mathcal{T}(Q)$.} \label{figshadowQ}
\end{figure}

More generally, given a set $U\subset \mathcal{Q}$ we call its shadow 
$${\mathbf{Sh}}(U)=\left\{y\in \mathcal{Q}\cap \Omega  : \mbox{ there exists } x \in U \mbox{ such that } y_d<x_d \mbox{ and } x'=y' \right\}.$$
Recall that we have a proper orientation in the Whitney covering. Thus, given a Whitney cube Q, we call the father of $Q$, $\mathcal{F}(Q)$ the neighbor Whitney cube which is immediately on top of $Q$ with respect to the vertical direction. This parental relation induces an order relation ($P\leq Q$ if $P$ is a descendant of $Q$). This would provide a tree structure to the Whitney covering $\mathcal{W}$ if there was a common ancestor $Q_0$ for all the cubes. This does not happen, but we can add a ``formal'' cube $Q_0$ (root of the tree) and then we can write $Q\leq Q_0$ for every $Q\subset \mathcal{Q}$. If we call $\mathcal{T}$ to the tree with the Whitney cubes as vertices complemented with $Q_0$ and the strucutre given by the order relation $\leq$, then  for every Whitney cube $Q\subset\mathcal{Q}$, 
$${\mathbf{Sh}}(Q)=\bigcup_{P\leq Q} P=\bigcup_{P\in \mathbf{Sh}_\mathcal{T}(Q)} P$$
(see Figure \ref{figshadowQ}). Since we will only consider functions and measures supported in the window canvas $\delta_0 \mathcal{Q}\cap \Omega$, we can extend any of them formally in $Q_0$ as the null function.

Now, some minor modifications in the proof of \cite[Proposition 16]{ars} allow us to rewrite this theorem in the following way.
\begin{propo}\label{propocarlesoncontinu}
Given $1<p<\infty$ and an $R$-window $\mathcal{Q}$ of a Lipschitz domain $\Omega$ with a properly oriented Whitney covering $\mathcal{W}$, consider the weights $\rho(x)=\dist(x, \partial\Omega)^{d-p}$, $\rho_\mathcal{W}(Q)=\ell(Q)^{d-p}$. For a positive Borel measure $\mu$ supported on $\delta_0 \mathcal{Q}\cap \Omega$, the following are equivalent:
\begin{enumerate} 
\item For every $a \in \delta_0 \mathcal{Q}\cap \Omega$ one has
$$\int_{\widetilde {\mathbf{Sh}}(a)} \rho(x)^{1-p'}(\mu({\mathbf{Sh}}(x)\cap {\mathbf{Sh}}(a)))^{p'} \frac{dx}{\dist(x, \partial\Omega)^d}\leq C \mu({\mathbf{Sh}}(a)).$$
\item For every $P\in \mathcal{W}$ one has
\begin{equation}\label{eqcarlesoncondition}
\sum_{Q\leq P} \left(\sum_{S\leq Q} \mu(S) \right)^{p'} \rho_\mathcal{W} (Q)^{1-p'}\leq C \sum_{Q\leq P} \mu(Q).
\end{equation}
\end{enumerate}
\end{propo}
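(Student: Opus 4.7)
The plan is to discretize the continuous integral in condition (1) using the Whitney covering, thereby reducing the equivalence to the tree version that appears in Theorem \ref{theoars}. The key geometric observation is that for each point $x \in \delta_0\mathcal{Q} \cap \Omega$, letting $Q(x)$ denote the Whitney cube containing $x$, property W\ref{itemboundeddistance} gives $\dist(x,\partial\Omega) \approx \ell(Q(x))$ with constants depending only on the Whitney character. Therefore the weight $\rho(x)^{1-p'} = \dist(x,\partial\Omega)^{(d-p)(1-p')}$ and the factor $\dist(x,\partial\Omega)^{-d}$ can be replaced by $\ell(Q(x))^{(d-p)(1-p')}$ and $\ell(Q(x))^{-d}$ respectively, up to bounded factors.

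Next I would relate the continuous shadows to the discrete shadow $\mathbf{Sh}(Q) = \bigcup_{S\leq Q} S$. For $a$ in a Whitney cube $P$, the vertical extension $\widetilde{\mathbf{Sh}}(a)$ is covered by the cubes $Q \in \mathcal{W}$ whose vertical projection lies in the projection of $P$ and whose top lies at height at most $a_d + 2\ell(P)$. Using property W\ref{itemWvertical} (finite number of cubes of a given side length above a vertical line), these cubes are exactly the descendants $Q \leq P$ in the tree, together with a uniformly bounded number of cubes of size comparable to $\ell(P)$ sitting just above $P$. For $x \in Q$ with $Q \leq P$, one has $\mathbf{Sh}(x) \subset \mathbf{Sh}(a)$, so $\mathbf{Sh}(x) \cap \mathbf{Sh}(a) = \mathbf{Sh}(x)$, and moreover $\mathbf{Sh}(x)$ differs from $\mathbf{Sh}(Q) = \bigcup_{S\leq Q} S$ only by a portion of $Q$ itself, whose $\mu$-measure is bounded by $\mu(Q) \leq \mu(\mathbf{Sh}(Q))$.

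Combining these two reductions, I would write
\begin{equation*}
\int_{\widetilde{\mathbf{Sh}}(a)} \rho(x)^{1-p'}(\mu(\mathbf{Sh}(x)\cap\mathbf{Sh}(a)))^{p'}\frac{dx}{\dist(x,\partial\Omega)^d}
\approx \sum_{Q \leq P} \ell(Q)^{(d-p)(1-p')} \, \mu(\mathbf{Sh}(Q))^{p'},
\end{equation*}
with comparability constants depending only on $d$, $p$ and the Lipschitz and Whitney constants; the handful of boundary cubes above $P$ contribute $\lesssim \mu(\mathbf{Sh}(a))^{p'} \cdot \ell(P)^{(d-p)(1-p')+d}$, which is absorbed into the right-hand side of \rf{eqcarlesoncondition} since the Carleson-type inequality for constant data is trivial on a bounded collection. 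Recognising $\rho_{\mathcal{W}}(Q)^{1-p'} = \ell(Q)^{(d-p)(1-p')}$ and $\mu(\mathbf{Sh}(Q)) = \sum_{S \leq Q} \mu(S)$, the left-hand side is exactly the sum in \rf{eqcarlesoncondition}, while the right-hand side $\mu(\mathbf{Sh}(a))$ is comparable to $\sum_{Q \leq P} \mu(Q)$. Applying this pointwise comparison to all $a \in \delta_0\mathcal{Q} \cap \Omega$, and conversely choosing a representative point in each Whitney cube to obtain the reverse inequality, yields the equivalence.

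The main obstacle will be bookkeeping at the top of $\widetilde{\mathbf{Sh}}(a)$: a clean correspondence between cubes and shadows holds only for descendants of $P$, whereas the integral samples a bounded neighbourhood of $P$ as well. The verification that those finitely many extra cubes produce a contribution dominated by $\mu(\mathbf{Sh}(a))$ is essentially automatic, but it must be carried out carefully to ensure the comparability constants depend only on the Whitney character and not on $P$ or $a$. Once this is handled, the proof follows exactly the scheme of \cite[Proposition 16]{ars}, with the Euclidean tree of dyadic cubes replaced by the tree $\mathcal{T}$ on $\mathcal{W}$ induced by the parental relation from Section \ref{secwhitney}.
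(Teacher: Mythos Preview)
Your proposal is correct and follows exactly the approach indicated by the paper, which does not give a full proof but states that the result follows from ``some minor modifications in the proof of \cite[Proposition 16]{ars}''. Your discretization via the Whitney covering, the use of $\dist(x,\partial\Omega)\approx\ell(Q(x))$, and the identification of continuous shadows with the tree shadows $\mathbf{Sh}_\mathcal{T}(Q)$ are precisely the modifications required to transport the ARS argument from the dyadic setting on the disk to the Whitney tree on a Lipschitz window.
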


In virtue of \cite[Theorem 1]{ars}, when $d=2$ and the domain $\Omega$ is the unit disk in the plane, the first condition is equivalent to $\mu$ being a Carleson measure for the analytic Besov space $B_p(\rho)$, that is, for every analytic function defined on the unit disc $\mathbb{D}$, 
$$\norm{f}_{L^p(\mu)}^p\lesssim \norm{f}_{B_p(\rho)}^p = |f(0)|^p+ \int_{\mathbb{D}} (1-|z|^2)^p |f'(z)|^p \rho(z) \frac{dm(z)}{(1-|z|^2)^2}.$$
\begin{definition}\label{defmesuraCarleson}
We say that a measure satisfying the hypothesis of Proposition \ref{propocarlesoncontinu} is a $p$-Carleson measure for $\mathcal{Q}$.

We say that a positive and finite Borel measure $\mu$ is a $p$-Carleson measure for a Lipschitz domain $\Omega$ if it is a $p$-Carleson measure for every $R$-window of the domain.
\end{definition}

We are ready to prove the second theorem. This proof is very much in the spirit of Theorem \ref{theoTP}. Again we fix a point $x_0 \in \Omega$ and we use the polynomials $P_\lambda(x)=(x-x_0)^\lambda$ for every multiindex $|\lambda|<n$, but now the key point is to use the Poincar\'e inequality instead of the Sobolev Embedding Theorem. Our hypothesis is reduced to $d\mu_\lambda(x)=|\nabla^n T_\Omega  P_\lambda (x)|^p dx$ being a $p$-Carleson measure for $\Omega$ for every $|\lambda|<n$.

\begin{proof}[\textbf{Proof of Theorem \ref{theocarleson}}]
Consider a fixed $R$-window $\mathcal{Q}$ and a properly oriented Whitney covering $\mathcal{W}$, that is, with dyadic grid parallel to the window faces. Making use of Lemma \ref{lemklemma2}, we only need to bound
\begin{equation*}
\sum_{Q\in\mathcal{W}} \norm{\nabla^n T_\Omega  (\mathbf{P}^{n-1}_{3Q} f)}_{L^p(Q)}^p\leq C\norm{f}^p_{W^{n,p}(\Omega)}
\end{equation*}
for every $f\in W^{n,p}(\Omega)$ with $f|_{({\delta_0}\mathcal{Q})^c}\equiv 0$.

Fix such a function $f$. Using the expression \rf{eqtaylorexp} and expanding it as in \rf{eqderivadadeT} at a fixed point $x_0 \in \Omega$, we have
\begin{align*}
\sum_{Q\in\mathcal{W}} \norm{\nabla^n T_\Omega  (\mathbf{P}^{n-1}_{3Q} f)}_{L^p(Q)}^p 
	& \lesssim \sum_{|\gamma|<n} \sum_{\vec{0}\leq \lambda \leq\gamma} C_{\gamma, \lambda, \Omega}\sum_{Q\in\mathcal{W}} |m_{Q,\gamma}|^p\norm{\nabla^n T_\Omega  P_\lambda}_{L^p(Q)}^p.
\end{align*}
Moreover, by induction on \rf{eqmQgammaaillat}, the coefficients are bounded by
\begin{align*}
|m_{Q,\gamma}|
	& \lesssim \sum_{|\beta|<n :\, \beta\geq\gamma} \ell(Q)^{|\beta-\gamma|} C_{\beta,\gamma}  \left|\fint_{3Q} D^\beta f \, dm\right| \lesssim \sum_{|\beta|<n :\, \beta\geq\gamma} C_{\beta,\gamma,R} \left|\fint_{3Q} D^\beta f \, dm\right|,
\end{align*}
so 
\begin{align*}
\sum_{Q\in\mathcal{W}} \norm{\nabla^n T_\Omega  (\mathbf{P}^{n-1}_{3Q} f)}_{L^p(Q)}^p 
	& \lesssim \sum_{\substack{|\beta|<n\\ \vec{0}\leq\lambda\leq \beta}}\sum_{Q\in\mathcal{W}}  \left|\fint_{3Q} D^\beta f \, dm\right|^p \mu_\lambda(Q).
\end{align*}

 Taking into account that $f|_{({\delta_0}\mathcal{Q})^c}\equiv 0$, we have $\fint_{3P} D^\beta f \, dm =0$ for $P$ close enough to the root $Q_0$. Thus, 
$$ \fint_{3Q} D^\beta f\, dm =\sum_{P\in[Q,Q_0)} \left(\fint_{3P} D^\beta f \, dm-\fint_{3{\mathcal{F}(P)}} D^\beta f\, dm\right),$$
and we can use the Poincar\'e inequality to find that
 \begin{align}\label{eqdespresdepoincare}
\sum_{Q\in\mathcal{W}} \norm{\nabla^n T_\Omega  (\mathbf{P}^{n-1}_{3Q} f)}_{L^p(Q)}^p 
	& \lesssim \sum_{\substack{|\beta|<n\\ \vec{0}\leq\lambda\leq \beta}}\sum_{Q\in\mathcal{W}} \left(\sum_{P\geq Q}    \ell(P) \fint_{5P} |\nabla  D^\beta f| \, dm \right)^p \mu_\lambda(Q) .
 \end{align}

By assumption, $\mu_\lambda$ is a $p$-Carleson measure for every $|\lambda|<n$, that is, it satisifies both conditions of Proposition \ref{propocarlesoncontinu}. By Theorem \ref{theoars}, we have that, for every $h \in l^p(\rho_{\mathcal{W}})$, 
\begin{equation}\label{eqacotaphi}
\sum_{Q\in\mathcal{W}} \left(\sum_{P \geq Q} h(P) \right)^p \mu_\lambda (Q) \leq C \sum_{Q\in\mathcal{W}} h(Q)^p \ell(Q)^{d-p},
\end{equation}
where $\rho_\mathcal{W}(Q)=\ell(Q)^{d-p}$.

Let us fix $\beta$ and $\lambda$ momentarily and take $h(P)=\ell(P)\fint_{5P} |\nabla D^\beta f| \, dm$ in \rf{eqacotaphi}. Using Jensen's inequality and the finite overlapping of the quintuple cubes, we have 
\begin{align}\label{eqacotanablabeta}
\sum_{Q\in\mathcal{W}} \left(\sum_{P\geq Q} \ell(P) \fint_{5P} |\nabla  D^\beta f|\, dm \right)^p \mu_\lambda(Q) 
\nonumber	& \leq C \sum_{Q\in\mathcal{W}} \left(\fint_{5Q} |\nabla D^\beta f|\, dm \right)^p \,\ell(Q)^{d} \\
\nonumber	& \lesssim  \sum_{Q\in\mathcal{W}} \fint_{5Q} |\nabla D^\beta f|^p \, dm \,\ell(Q)^{d} \\
	& \lesssim  \int_{\Omega} |\nabla D^\beta f|^p \, dm .
\end{align}

Plugging \rf{eqacotanablabeta} into \rf{eqdespresdepoincare} for each $\beta$ and $\lambda$, we get 
\begin{equation*}
\sum_{Q\in\mathcal{W}} \norm{\nabla^n T_\Omega  (\mathbf{P}^{n-1}_{3Q} f)}_{L^p(Q)}^p\leq C\norm{f}^p_{W^{n,p}(\Omega)}.
\end{equation*}
\end{proof}

%

\section{The remaining implication in Theorem \ref{theoTb}}\label{secmain2}
In this section we prove the implication $1.\implies 2.$ in Theorem \ref{theoTb}. First we need some tools from partial differential equations.

\begin{rem}
Given $g\in L^1_0(\overline{\R^d_+})$ and $d>2$, consider the function 
\begin{equation}\label{eqF}
F(x):= N[(R^{(d-1)}_d {g}) d\sigma](x)=\int_{\partial\R^d_+}\frac{(R^{(d-1)}_d {g})(y) }{(2-d)w_d|x-y|^{d-2}} d\sigma(y) \mbox{\quad \quad for }x\in \R^d_+, \end{equation}
where $N$ denotes the Newton potential \rf{eqdefpotencialnewton}, $R^{(d-1)}_d$ stands for the vertical component of the vectorial $(d-1)$-dimensional Riesz transform $R^{(d-1)}$ and $d\sigma$ is the hypersurface measure in $\partial\R^d_+$.  This function is well defined  since
\begin{align*}
\norm{R^{(d-1)}_d {g}}_{L^1(\sigma)}
	& \leq \int_{\partial \R^d_+}\int_{\R^d_+}\frac{z_d}{|y-z|^d} |{g}(z)|\, dz \, d\sigma(y)\\
	& = \int_{\R^d_+} \left(\int_{\partial \R^d_+} \frac{z_d}{|y-z|^d}  d\sigma(y) \right) |{g}(z)|\,dz \approx \norm{ {g}}_{1} 
\end{align*}
and, thus, the right-hand side of \rf{eqF} is an absolutely convergent integral  for each $x\in\R^d_+$, with $F(x)\leq \frac{\norm{{g}}_{1}}{|x_d|^{d-2}}$. By the same token, all the derivatives of $F$ are well defined, $F$ is $C^\infty(\R^d_+)$, harmonic and $\nabla F(x)=R^{(d-1)} [(R^{(d-1)}_d {g}) d\sigma](x)$. When $d=2$ we have to make the usual modifications.
\end{rem}

\begin{lemma}\label{lemNeumann}
Consider a ball $B_1\subset \R^d$ centered at the origin and a real number $\varepsilon>0$. Let $g\in L^1(\R^d_+\cap \frac14 B_1)$ with $g(x',x_d)=0$ for every $(x',x_d)\in \R^{d-1}\times(0,\varepsilon)$  and define
$$h(x):= N[(R^{(d-1)}_d g) d\sigma](x)-N g(x).$$
Then $h$ has weak derivatives in $\R^d_+$ and for every $\phi\in C^\infty_c(\overline{\R^d_+})$,
\begin{equation}\label{eqneumann}
\int_{\R^d_+}\nabla \phi \cdot \nabla h \, dm=\int_{\R^d_+} \phi {g} \, dm.
\end{equation}

Furthermore, if $B_1$ has radius $r_1$ then for every $x\in\R^d_+\setminus B_1$ we have
\begin{equation}\label{eqboundhpointwise}
|h(x)|\lesssim \begin{cases}
\dfrac{1}{|x|^{d-2}}\norm{g}_1& \mbox{if }d>2,\\
\left(\left|\log|x|\right|+1 + r_1\dfrac{x_2 |\log x_2|}{|x|^2} \right) \norm{g}_1 & \mbox{if }d=2,\\
\end{cases} 
\end{equation}
and
\begin{equation}\label{eqboundpartialhpointwise}
|\nabla h(x)|\lesssim
\frac{1}{|x|^{d-1}}\left(1+\left|\log\frac{x_d}{|x|}\right|\right) \norm{g}_1.
\end{equation}
\end{lemma}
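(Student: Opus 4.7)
The plan is to interpret $h$ as the solution of the Neumann problem $-\Delta h = g$ in $\R^d_+$ with vanishing Neumann trace on $\partial \R^d_+$, built by the method of layer potentials: $-Ng$ is a particular solution on $\R^d$, and the single layer potential $N[(R^{(d-1)}_d g)\,d\sigma]$ is harmonic in $\R^d_+$ and is added precisely so that its jump at the boundary cancels the nonzero Neumann trace of $-Ng$. First I would justify that $h$ is well-defined and admits weak (in fact classical) derivatives on $\R^d_+$. The hypothesis $g(\cdot,x_d)\equiv 0$ for $x_d\in(0,\varepsilon)$ keeps the singularities of $\Gamma$ and $\nabla\Gamma$ away from the boundary in the expression for $R^{(d-1)}_d g$, so $(R^{(d-1)}_d g)|_{\partial\R^d_+}$ is smooth, and the estimate from the remark preceding the lemma gives its integrability on $\partial\R^d_+$. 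Both $Ng$ and the single layer potential $N[(R^{(d-1)}_d g)\,d\sigma]$ are then absolutely convergent and smooth on $\R^d_+$ with classical partial derivatives, so \eqref{eqneumann} makes sense and weak derivatives coincide with classical ones in the interior.

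Next I would verify \eqref{eqneumann} by direct integration by parts on $\R^d_+$. Since $\Delta Ng=g$ on $\R^d$ and $N[(R^{(d-1)}_d g)\,d\sigma]$ is harmonic in $\R^d_+$ (its source lives on the boundary), one has $\Delta h=-g$ in the open half-space. Integrating by parts against a test function $\phi\in C^\infty_c(\overline{\R^d_+})$ produces $\int_{\R^d_+}\phi g\,dm$ together with a boundary trace term $-\int_{\partial\R^d_+}\phi\,\partial_d h\,d\sigma$. I would then invoke the standard jump relation for the single layer potential on a flat boundary, which in the present normalization follows from the fact that $x_d/\bigl(w_d(x_d^2+|x'-y'|^2)^{d/2}\bigr)$ is half of the Poisson kernel of $\R^d_+$, and combine it with $\partial_d Ng|_{\partial\R^d_+}=R^{(d-1)}_d g|_{\partial\R^d_+}$ to verify that the boundary trace cancels, giving \eqref{eqneumann}.

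For the pointwise bounds on $\R^d_+\setminus B_1$, the estimate for $Ng$ and $\nabla Ng$ is immediate from $|x-y|\approx |x|$ uniformly for $y\in\supp(g)\subset\tfrac14 B_1$ and $x\notin B_1$. For the single layer piece, Fubini rewrites it as
$$\int_{\R^d_+} g(z)\,z_d\,K(x,z)\,dz, \qquad K(x,z):=\int_{\R^{d-1}}\frac{|x-(y',0)|^{2-d}}{(2-d)\,w_d^{2}\,|(y',0)-z|^{d}}\,dy',$$
and I would analyze $K$ by rescaling along the boundary to reduce it to a dimensionless integral depending essentially on $x_d/|x|$ and the position of $z$ relative to $x$. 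This yields the claimed polynomial decay in $|x|$; the logarithmic correction $|\log(x_d/|x|)|$ in \eqref{eqboundpartialhpointwise}, as well as the $d=2$ logarithms in \eqref{eqboundhpointwise}, arises from the near-boundary regime where the rescaled kernel becomes borderline singular and radial integration produces a one-dimensional logarithmic integral of the type $\int_0^1 (t^2+(x_d/|x|)^2)^{-1/2}\,dt$. The case $d=2$ will require separate handling because the fundamental solution is logarithmic and its contribution must be tracked more carefully (which explains the two distinct regimes in \eqref{eqboundhpointwise}).

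The main obstacle will be this final step: extracting the sharp logarithmic factors from the boundary convolution. A crude size estimate easily yields polynomial decay in $|x|$ without the correct logarithmic dependence on $x_d/|x|$; the refinement to the form in \eqref{eqboundhpointwise}--\eqref{eqboundpartialhpointwise} requires splitting the region of integration in $y'$ according to whether $|y'-z'|$ is smaller or larger than $|x|$, together with an additional dichotomy near $x_d\ll|x|$, and keeping track of the dimensional factors in both the $d>2$ and $d=2$ settings.
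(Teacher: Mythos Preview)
Your proposal is correct and follows essentially the same approach as the paper's sketch: layer-potential interpretation plus Green's identities for \eqref{eqneumann}, then a decomposition of the boundary integral for the single-layer piece to extract the logarithmic factors. The paper splits the $y$-integration into the three regions $|y|<|x|/2$, $|x|/2\le|y|\le 2|x|$, and $|y|>2|x|$ (the logarithm arises from the middle one via $\int_{|y'|\lesssim |x|} |x-y|^{1-d}\,dy'$), which is equivalent to your proposed splitting by $|y'-z'|$ versus $|x|$ since $|z|\le r_1/4<|x|$ forces $|y'-z'|\approx|y|$.
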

\begin{rem}
Note that $h$ can be understood as a weak solution to the Neumann problem
\begin{equation*}
\begin{cases}
-\Delta h (x)= g(x) & \mbox{if }x\in \R^d_+,\\
\partial_d h (y)= 0 & \mbox{if }y \in \partial\R^d_+.
\end{cases}
\end{equation*}
\end{rem}

\begin{proof}[Sketch of the proof of Lemma \ref{lemNeumann}]
Let us define $F$ as in \rf{eqF}. Then, 
\begin{equation*}
\nabla F = R^{(d-1)}[(R^{(d-1)}_d {g} )d\sigma]
\end{equation*}
and $h=F-N {g}$. It is an exercise to check that $F$ and $Ng$ are  $C^1$ up to the boundary, with $\partial_d F(y)=R^{(d-1)}_d{g}(y)$ for all $y\in \partial\R^d_+$. Consider $\phi\in C^\infty_c(\overline{\R^d})$.
Using the Green identities, since $F$ is harmonic in $\R^d_+$, we have
\begin{align*}
\int_{\R^d_+} \nabla \phi \cdot \nabla F \, dm - \int_{\R^d_+} \nabla \phi \cdot \nabla N {g} \, dm
	& = \int_{\partial\R^d_+} \phi \, \partial_d F \, d\sigma-  \int_{\partial\R^d_+} \phi\, R^{(d-1)}_d {g}  \, d\sigma + \int_{\R^d_+} \phi {g} \, dm =\int_{\R^d_+} \phi {g},
\end{align*}
proving \rf{eqneumann}.

To prove the pointwise bounds for $\nabla h$, recall that 
$$\nabla h (x)= R^{(d-1)} [(R^{(d-1)}_d g) d\sigma](x)-R^{(d-1)}g(x).$$
Given $x\in \R^d_+\setminus B_1$, since $\supp(g)\subset \frac14 B_1$,
\begin{equation}\label{eqacotaRieszmoltpetardeo}
|R^{(d-1)}g(x)|=c\left| \int_{B_1} \frac{g(z)(x-z)}{|x-z|^d} dz\right|\lesssim \frac{\norm{g}_1}{|x|^{d-1}}.
\end{equation}
On the other hand, consider $z\in \supp(g)\subset \frac14 B_1$ and $x\notin B_1$. Then, for $y\in\partial\R^d_+\cap B(0,|x|/2)$ one has $|x-y|\approx |x|$, for $y\in\partial\R^d_+\cap B(0,2|x|)\setminus B(0,|x|/2)$ one has $|y-z|\approx |x|$ and otherwise $|y-x|\approx|y-z|\approx|y|$. Thus, 
\begin{align}\label{eqacotaRieszpetardeo}
\nonumber\left|R^{(d-1)} [(R^{(d-1)}_d g) d\sigma](x)\right| 
	& = c\left|\int_{\partial\R^d_+}\left(\int_{B_1} \frac{g(z)z_d \,dz}{|y-z|^d} \right)\frac{(x-y) d\sigma(y)}{|x-y|^d} \right|\\
\nonumber	& \lesssim \int_{\partial\R^d_+\cap B(0, |x|/2)} \left(\int_{B_1} \frac{|g(z)| z_d \,dz}{|y-z|^d} \right)\frac{d\sigma(y)}{|x|^{d-1}} \\
\nonumber	& \quad + \int_{\partial\R^d_+\cap  B(0,2|x|)\setminus B(0,|x|/2)}\left(\int_{B_1} \frac{|g(z)|z_d \,dz}{|x|^d} \right) \frac{ d\sigma(y)}{|x-y|^{d-1}} \\
	& \quad+ \int_{\partial\R^d_+\setminus B(0, 2|x|)}\left(\int_{B_1} |g(z)|z_d \,dz \right)\frac{ d\sigma(y)}{|y|^{2d-1}}.
\end{align}

The first term can be bounded by $C\frac{\norm{g}_1}{|x|^{d-1}}$ because $\int_{\partial \R^d_+} \frac{d\sigma(y)}{|y-z|^d}= C \frac{1}{z_d}$. The second can be bounded by $C\frac{r_1\norm{g}_1}{|x|^{d}}\left|\log{\frac{|x_d|}{|x|}}\right|$ using polar coordinates and the last one can be bounded by $C \frac{r_1\norm{g}_1}{|x|^{d}}$ trivially. Thus, 
$$\left|R^{(d-1)} [(R^{(d-1)}_d g) d\sigma](x)\right|  \lesssim \frac{\norm{g}_1}{|x|^{d-1}}+ \frac{r_1\norm{g}_1}{|x|^{d}}\left|\log{\frac{x_d}{|x|}}\right|+ \frac{r_1\norm{g}_1}{|x|^{d}} ,$$
proving \rf{eqboundpartialhpointwise} since $r_1\leq |x|$. 

To prove the pointwise bounds for $h$, recall that 
$$h(x)= N [(R^{(d-1)}_d g) d\sigma](x)-Ng(x).$$
When $d>2$ we use the same method as in \rf{eqacotaRieszmoltpetardeo} and \rf{eqacotaRieszpetardeo} using Newton's potential instead of the vectorial $(d-1)$-dimensional Riesz transform to get 
$$|h^s(x)|\lesssim \frac{\norm{g}_1}{|x|^{d-2}}+\frac{r_1\norm{g}_1}{|x|^{d-1}}.$$ 

When $d=2$ the Newton potential is logarithmic, but the spirit is the same. In this case, arguing as before, 
$$|h^s(x)|\lesssim \log |x| \norm{g}_1+r_1\norm{g}_1 \frac{|x|+|x|\log |x|+x_2\log x_2}{|x|^2}.$$
\end{proof}

\begin{propo}\label{propomesuradecarleson}
Let $1<p<\infty$. Given a window $\mathcal{Q}$ of a special Lipschitz domain $\Omega$ with a Whitney covering $\mathcal{W}$ and given $f\in W^{1,p}(\Omega)$, define the \emph{Whitney averaging function}
\begin{equation}\label{eqdeftildeT}
{\mathcal{A}} f(x):=\sum_{Q\in\mathcal{W}} \chi_{Q}(x) \fint_{3Q} f(y) dy.
\end{equation}
If  $\mu$ is a finite positive Borel measure supported on $\delta_0 \mathcal{Q}$ with 
\begin{equation}\label{eqcotamesura}
\mu({\mathbf{Sh}}(Q))\leq C\ell(Q)^{d-p} \mbox{\,\,\,\, for every Whitney cube } Q\subset \mathcal{Q},
\end{equation}
and ${\mathcal{A}}: W^{1,p}(\Omega) \to L^p(\mu)$ is bounded, then $\mu$ is a $p$-Carleson measure.
\end{propo}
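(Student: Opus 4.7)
The goal is to establish the discrete Carleson condition of Proposition \ref{propocarlesoncontinu}(2) for $\mu$ on the tree of Whitney cubes of $\mathcal{Q}$; by Theorem \ref{theoars} this is equivalent to the trace inequality
\[
\sum_{Q\in\mathcal{W}} \mathcal{I}h(Q)^p \, \mu(Q) \;\lesssim\; \sum_{Q\in\mathcal{W}} h(Q)^p \,\ell(Q)^{d-p}
\]
for every nonnegative $h:\mathcal{W}\to [0,\infty)$ supported on cubes inside $\delta_0 \mathcal{Q}$, where $\mathcal{I}h(Q)=\sum_{P\ge Q} h(P)$ is the primitive along the tree. My strategy is to deduce this inequality from the boundedness of $\mathcal{A}:W^{1,p}(\Omega)\to L^p(\mu)$ by constructing, for each such $h$, a test function $f_h\in W^{1,p}(\Omega)$ satisfying
\[
\mathcal{A}f_h(x) \;\gtrsim\; \mathcal{I}h(S) \text{ for } x\in S\in\mathcal{W}, \qquad \norm{f_h}_{W^{1,p}(\Omega)} \;\lesssim\; \Bigl(\sum_Q h(Q)^p \,\ell(Q)^{d-p}\Bigr)^{1/p}.
\]
Plugging $f_h$ into the boundedness hypothesis on $\mathcal{A}$ then closes the argument.

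The natural candidate is $f_h(x) := \sum_{Q\in\mathcal{W}} \mathcal{I}h(Q)\,\varphi_Q(x)$, where $\{\varphi_Q\}$ is a smooth partition of unity subordinate to $\{\tfrac32 Q\}$ with $\norm{\nabla \varphi_Q}_\infty \lesssim \ell(Q)^{-1}$. The $L^p$ norm of $f_h$ can be handled by observing that $\mu_0(Q):=\ell(Q)^d$ trivially satisfies the discrete Carleson condition against $\rho_\mathcal{W}(Q)=\ell(Q)^{d-p}$, an exercise using Lemma \ref{lemad-1}; Theorem \ref{theoars} then gives $\sum_Q \mathcal{I}h(Q)^p\ell(Q)^d \lesssim \sum_Q h(Q)^p\ell(Q)^{d-p}$, which controls $\norm{f_h}_{L^p}$. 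Using $\sum_Q \nabla\varphi_Q\equiv 0$, for $x\in S$ one has $\nabla f_h(x) = \sum_{Q:\,\frac32 Q\ni x}(\mathcal{I}h(Q)-\mathcal{I}h(S))\,\nabla\varphi_Q(x)$, where only Whitney neighbors of $S$ contribute. For the pointwise lower bound on $\mathcal{A}f_h$, one uses the same locality: on $3S$ the partition picks up only neighbors of $S$ with comparable side-length, and because $h\ge 0$, tree-descendants $P\le S$ satisfy $\mathcal{I}h(P)\ge\mathcal{I}h(S)$, while for other neighbors one can write $\mathcal{I}h(P)\ge\mathcal{I}h(S)-C\sum_{R\sim S} h(R)$; integrating and absorbing the error by \rf{eqcotamesura} together with the finite-overlap of Whitney neighborhoods yields $\sum_S(\sum_{R\sim S}h(R))^p\mu(S)\lesssim \sum_R h(R)^p\mu(\mathbf{Sh}(R'))\lesssim \sum_R h(R)^p\ell(R)^{d-p}$.

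The main obstacle is controlling $\norm{\nabla f_h}_{L^p}$ when $\mathcal{I}h$ oscillates strongly across Whitney columns. For two Whitney neighbors $S,P$ lying in the same tree-column the jump $|\mathcal{I}h(S)-\mathcal{I}h(P)|$ is just $h(S)$ or $h(P)$, but for laterally adjacent $S,P$ the tree-distance can be as large as $O(\log(R/\ell(S)))$ and the naive $|\nabla f_h|\lesssim \ell(S)^{-1}\sum_{R\sim S}h(R)$ estimate fails. I expect the fix will require one of two modifications: either replacing $\varphi_Q$ by a one-sided bump adapted to the shadow $\mathbf{Sh}(Q)$ so that gradients only arise across its boundary (where the Lipschitz geometry of $\Omega$ controls the oscillation), or constructing $f_h$ instead as a solution of the Neumann problem $-\Delta f_h=g_h$, $\partial_\nu f_h=0$, with $g_h = \sum_Q h(Q)\ell(Q)^{-1}\chi_Q$, in the spirit of Lemma \ref{lemNeumann}, so that $\norm{\nabla f_h}_{L^p}\lesssim \norm{g_h}_{L^p}=\norm{h}_{\ell^p(\rho_\mathcal{W})}$ by elliptic estimates and the $L^p$-boundedness of the Riesz transform. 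Either route turns this lateral-oscillation issue into the crucial technical point of the proof; once it is handled, the trace inequality follows and Proposition 9.2 is proven.
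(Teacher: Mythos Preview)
Your reformulation via the ARS trace inequality is correct, and you have accurately located the real obstruction: for the partition-of-unity ansatz $f_h=\sum_Q \mathcal{I}h(Q)\varphi_Q$, laterally adjacent Whitney cubes $S,P$ can have tree-geodesics to $Q_0$ that diverge near the root, so $|\mathcal{I}h(S)-\mathcal{I}h(P)|$ may involve all of $h$ along two long columns and is not controlled by values of $h$ near $S$. This is a genuine gap, and you leave it open.

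The paper takes the \emph{dual} route, which reorganizes the difficulty in a decisive way. Instead of building a test function for every tree datum $h$, one fixes $P$, sets $g=\chi_{\mathbf{Sh}(P)}$, and uses boundedness of $\mathcal{A}^*$ to get $\|\mathcal{A}^*g\|_{(W^{1,p})^*}^{p'}\lesssim \mu(\mathbf{Sh}(P))$; the goal becomes
\[
\sum_{Q\le P}\mu(\mathbf{Sh}(Q))^{p'}\ell(Q)^{(p-d)/(p-1)}\;\lesssim\;\|\mathcal{A}^*g\|_{(W^{1,p})^*}^{p'}+\mu(\mathbf{Sh}(P)).
\]
After flattening to $\R^d_+$ via $\omega$, one forms $g_0=(\widetilde g\circ\omega)|\det D\omega|$ and solves the Neumann problem $-\Delta h=g_0$, $\partial_d h|_{\partial\R^d_+}=0$ of Lemma \ref{lemNeumann}, so that $\langle\mathcal{A}^*g,\phi\circ\omega^{-1}\rangle=\int_{\R^d_+}\nabla\phi\cdot\nabla h$. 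A Riesz-transform manoeuvre (write $\partial_{dd}I_\phi=\Delta R^{(d)}_{dd}I_\phi$ and undo one derivative) then yields $\|\partial_d h\|_{L^{p'}(\mathbf{Sh}_\omega(P))}\lesssim\|\mathcal{A}^*g\|+\mu(\mathbf{Sh}(P))$. Finally, a pointwise \emph{lower} bound on the explicit kernel $G(x,z)$ of $\partial_d h$ in terms of $g_0$ gives $\mu(\mathbf{Sh}(Q))\lesssim|\partial_d h(x)|\,\ell(Q)^{d-1}+(\text{error})$ for $x\in Q_\omega$, and the error is absorbed using \rf{eqcotamesura} together with Lemma \ref{lembad-1}.

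Your second suggestion is essentially this argument read through the duality mirror; but note that your claimed bound $\|\nabla f_h\|_{L^p}\lesssim\|g_h\|_{L^p}$ is not a direct Riesz-transform estimate (the multiplier $\xi/|\xi|^2$ is unbounded), so even that route would need the decay information of Lemma \ref{lemNeumann} and a localization argument. The dual formulation is cleaner because it replaces ``build $f_h$ with the right pointwise lower bound for every $h$'' by ``establish a pointwise lower bound for one explicit positive kernel'', and because the quantity $\|\mathcal{A}^*g\|_{(W^{1,p})^*}$ is exactly what the Neumann representation computes.
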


\begin{proof}
We will argue by duality. Let us assume that the window $\mathcal{Q}=Q(0,\frac{R}2)$ is of side-length $R$ and centered at the origin, which belongs to $\partial \Omega$.
Note that the boundedness of $\mathcal{A}$ is equivalent to the boundedness of its dual operator
$$\mathcal{A}^*: L^{p'}(\mu) \to (W^{1,p}(\Omega))^*.$$
We also assume that $\mu\equiv 0$ in a neighborhood of $\partial \Omega$. One can prove the general case by means of truncation and taking limits since the constants of the Carleson condition \rf{eqcarlesoncondition} and the the norm of the averaging operator will not get worse by this procedure.

Fix a cube $P$. Analogously to \cite[Theorem 3]{ars}, we apply the boundedness of $\mathcal{A}^*$ to the test function $g=\chi_{{\mathbf{Sh}}(P)}$ to get 
\begin{equation*}
\norm{\mathcal{A}^* g}_{(W^{1,p}(\Omega))^*}^{p'} \lesssim \norm{g}^{p'}_{L^{p'}(\mu)}= \mu({\mathbf{Sh}}(P)).
\end{equation*}
Thus, it is enough to prove that
\begin{equation}\label{eqcotaobjectiu}
\sum_{Q\leq P} \mu({\mathbf{Sh}}(Q))^{p'} \ell(Q)^{\frac{p-d}{p-1}} \lesssim \norm{\mathcal{A}^* g}_{(W^{1,p}(\Omega))^*}^{p'} + \mu({\mathbf{Sh}}(P)).
\end{equation}

Given any $f \in W^{1,p}(\Omega)$, using \rf{eqdeftildeT} and Fubini's Theorem,
$$\langle \mathcal{A}^*g, f \rangle =\int g \, {\mathcal{A}}f \, d\mu=\int_{\Omega} f \left(\sum_{Q\in\mathcal{W} }\frac{\chi_{3Q} }{m(3Q)}\int_Q g \, d\mu \right)\, dm,$$
where we wrote $\langle\cdot ,\cdot \rangle$ for the duality pairing. Consider 
\begin{equation}\label{eqdefgtilde}
{\widetilde{g}}(x):= \sum_{Q\in\mathcal{W}} \frac{\chi_{3Q}(x)}{m(3Q)}\int_Q g \, d\mu= \sum_{Q\leq P} \chi_{3Q}(x)\frac{\mu(Q)}{m(3Q)}.
\end{equation}
Then, 
\begin{equation*}
\langle \mathcal{A}^*g, f \rangle =\int_{\Omega} f \, \widetilde{g} \, dm.
\end{equation*}
Note that $\widetilde{g}$ is in $L^\infty$ with norm depending on the distance from the support of $\mu$ to $\partial\Omega$ by \rf{eqcotamesura}, but the norm of $\widetilde{g}$ in $L^1$ is
\begin{equation*}
\norm{\widetilde{g}}_{L^1}= \mu(\mathbf{Sh}(P)).
\end{equation*}

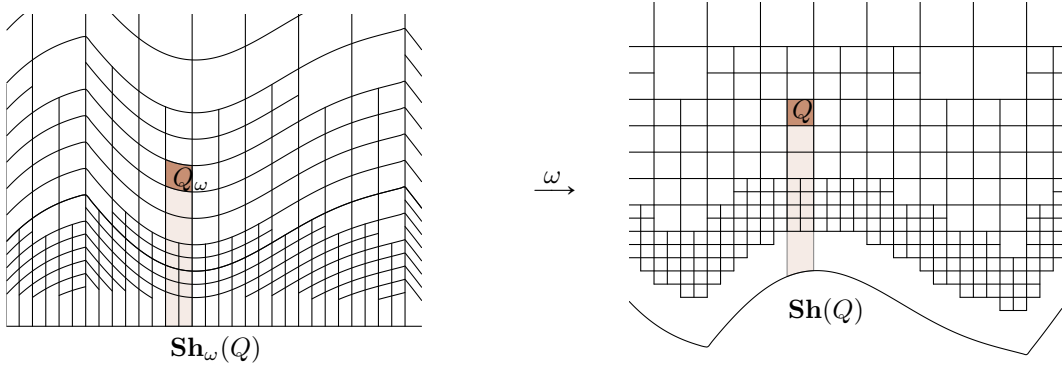
\begin{figure}[h]
\centering 
\begin{subfigure}[b]{0.45\textwidth}
\begin{tikzpicture}[line cap=round,line join=round,>=triangle 45,x=0.7cm,y=0.7cm]
\clip(-1.48,-2.5) rectangle (6.31,4.4);
\draw[help lines,fill=zzttqq,fill opacity=0.5] {[smooth,samples=50,domain=1.5:2.0] plot(\x,{2.5-(sin((\x)*180/pi)/2+abs((\x-3)^2-9)/8)})} -- (2,1.55) {[smooth,samples=50,domain=2.0:1.5] -- plot(\x,{3-(sin((\x)*180/pi)/2+abs((\x-3)^2-9)/8)})} -- (1.5,1.16) -- cycle;
\draw[help lines,fill=zzttqq,fill opacity=0.1] {[smooth,samples=50,domain=1.5:2.0] plot(\x,{0-(0/1)*\x-1.5/1})} -- (2,1.55) {[smooth,samples=50,domain=2.0:1.5] -- plot(\x,{3-(sin((\x)*180/pi)/2+abs((\x-3)^2-9)/8)})} -- (1.5,-1.5) -- cycle;
\draw[smooth,samples=100,domain=-1.4815679189171584:6.312208406939038] plot(\x,{6-(sin(((\x))*180/pi)/2+abs(((\x)-3)^2-9)/8)});
\draw[smooth,samples=100,domain=-1.4815679189171584:6.312208406939038] plot(\x,{1-(sin(((\x))*180/pi)/2+abs(((\x)-3)^2-9)/8)});
\draw[smooth,samples=100,domain=-1.4815679189171584:6.312208406939038] plot(\x,{0.5-(sin(((\x))*180/pi)/2+abs(((\x)-3)^2-9)/8)});
\draw[smooth,samples=100,domain=-1.4815679189171584:6.312208406939038] plot(\x,{3-(sin(((\x))*180/pi)/2+abs(((\x)-3)^2-9)/8)});
\draw[smooth,samples=100,domain=-1.4815679189171584:6.312208406939038] plot(\x,{2.5-(sin(((\x))*180/pi)/2+abs(((\x)-3)^2-9)/8)});
\draw[smooth,samples=100,domain=-1.4815679189171584:6.312208406939038] plot(\x,{2-(sin(((\x))*180/pi)/2+abs(((\x)-3)^2-9)/8)});
\draw[smooth,samples=100,domain=-1.4815679189171584:6.312208406939038] plot(\x,{1.5-(sin(((\x))*180/pi)/2+abs(((\x)-3)^2-9)/8)});
\draw[smooth,samples=100,domain=-1.4815679189171584:6.312208406939038] plot(\x,{1-(sin(((\x))*180/pi)/2+abs(((\x)-3)^2-9)/8)});
\draw[smooth,samples=100,domain=-1.4815679189171584:6.312208406939038] plot(\x,{4-(sin(((\x))*180/pi)/2+abs(((\x)-3)^2-9)/8)});
\draw[smooth,samples=100,domain=-1.4815679189171584:6.312208406939038] plot(\x,{5-(sin(((\x))*180/pi)/2+abs(((\x)-3)^2-9)/8)});
\draw (0,6)-- (0,-1.5);
\draw (-1,5.55)-- (-1,-1.5);
\draw[smooth,samples=100,domain=-1.4815679189171584:-1] plot(\x,{(3.5-(sin(((\x))*180/pi)/2+abs(((\x)-3)^2-9)/8))});
\draw[smooth,samples=100,domain=0:4] plot(\x,{(3.5-(sin(((\x))*180/pi)/2+abs(((\x)-3)^2-9)/8))});
\draw[smooth,samples=100,domain=6:6.312208406939038] plot(\x,{(3.5-(sin(((\x))*180/pi)/2+abs(((\x)-3)^2-9)/8))});
\draw (2.5,2.61)-- (2.5,-1.5);
\draw (3.5,3.08)-- (3.5,-1.5);
\draw (-1.5,3.09)-- (-1.5,-1.5);
\draw (6.5,3.49)-- (6.5,-1.5);
\draw (1,4.95)-- (1,-1.5);
\draw (-0.5,2.83)-- (-0.5,-1.5);
\draw (2,4.55)-- (2,-1.5);
\draw (4.5,2.65)-- (4.5,-1.5);
\draw (3,4.8)-- (3,-1.5);
\draw (5.5,3.01)-- (5.5,-1.5);
\draw (4,5.38)-- (4,-1.5);
\draw (5,5.85)-- (5,-1.5);
\draw (6,6.14)-- (6,-1.5);
\draw (0.5,3.42)-- (0.5,-1.5);
\draw (1.5,2.66)-- (1.5,-1.5);
\draw[smooth,samples=100,domain=0.5:3.5] plot(\x,{(1.25-(sin(((\x))*180/pi)/2+abs(((\x)-3)^2-9)/8))});
\draw[smooth,samples=100,domain=-1.4815679189171584:-1] plot(\x,{(0.75-(sin(((\x))*180/pi)/2+abs(((\x)-3)^2-9)/8))});
\draw[smooth,samples=100,domain=0:4.5] plot(\x,{(0.75-(sin(((\x))*180/pi)/2+abs(((\x)-3)^2-9)/8))});
\draw[smooth,samples=100,domain=6:6.312208406939038] plot(\x,{(0.75-(sin(((\x))*180/pi)/2+abs(((\x)-3)^2-9)/8))});
\draw (3.25,0.44)-- (3.25,-1.5);
\draw (3.75,0.23)-- (3.75,-1.5);
\draw (0.25,0.7)-- (0.25,-1.5);
\draw (-1.25,0.34)-- (-1.25,-1.5);
\draw (0.75,0.67)-- (0.75,-1.5);
\draw (1.75,0.08)-- (1.75,-1.5);
\draw (1.25,0.28)-- (1.25,-1.5);
\draw (2.25,0.06)-- (2.25,-1.5);
\draw (2.75,0.19)-- (2.75,-1.5);
\draw (4.25,0.52)-- (4.25,-1.5);
\draw (-0.25,0.43)-- (-0.25,-1.5);
\draw (-0.75,0.21)-- (-0.75,-1.5);
\draw (4.75,0.26)-- (4.75,-1.5);
\draw (5.25,0.44)-- (5.25,-1.5);
\draw[smooth,samples=100,domain=-1.4815679189171584:1.25] plot(\x,{(0.25-(sin(((\x))*180/pi)/2+abs(((\x)-3)^2-9)/8))});
\draw[smooth,samples=100,domain=3:5.5] plot(\x,{(0.25-(sin(((\x))*180/pi)/2+abs(((\x)-3)^2-9)/8))});
\draw[smooth,samples=100,domain=6:6.312208406939038] plot(\x,{(0.25-(sin(((\x))*180/pi)/2+abs(((\x)-3)^2-9)/8))});
\draw[smooth,samples=100,domain=-1.4815679189171584:0.75] plot(\x,{(0-(sin(((\x))*180/pi)/2+abs(((\x)-3)^2-9)/8))});
\draw[smooth,samples=100,domain=3.5:6.312208406939038] plot(\x,{(0-(sin(((\x))*180/pi)/2+abs(((\x)-3)^2-9)/8))});
\draw[smooth,samples=100,domain=-1.25:0.5] plot(\x,{(-0.25-(sin(((\x))*180/pi)/2+abs(((\x)-3)^2-9)/8))});
\draw[smooth,samples=100,domain=4:6.312208406939038] plot(\x,{(-0.25-(sin(((\x))*180/pi)/2+abs(((\x)-3)^2-9)/8))});
\draw[smooth,samples=100,domain=-1:0.25] plot(\x,{(-0.5-(sin(((\x))*180/pi)/2+abs(((\x)-3)^2-9)/8))});
\draw[smooth,samples=100,domain=4.25:6.312208406939038] plot(\x,{(-0.5-(sin(((\x))*180/pi)/2+abs(((\x)-3)^2-9)/8))});
\draw[smooth,samples=100,domain=-0.5:0] plot(\x,{(-0.75-(sin(((\x))*180/pi)/2+abs(((\x)-3)^2-9)/8))});
\draw[smooth,samples=100,domain=4.75:6.312208406939038] plot(\x,{(-0.75-(sin(((\x))*180/pi)/2+abs(((\x)-3)^2-9)/8))});
\draw[smooth,samples=100,domain=5.5:6] plot(\x,{(-1-(sin(((\x))*180/pi)/2+abs(((\x)-3)^2-9)/8))});
\draw (5.75,0.07)-- (5.75,-1.5);
\draw [domain=-1.48:6.31] plot(\x,{(-1.5-0*\x)/1});
\draw (1.46,1.68) node[anchor=north west] {$Q_\omega$};
\draw (1.41,-1.51) node[anchor=north west] {$\mathbf{Sh}_\omega(Q)$};
\end{tikzpicture}
\end{subfigure}
\begin{subfigure}[b]{0.08\textwidth}
\begin{tikzpicture}[line cap=round,line join=round,>=triangle 45,x=0.7cm,y=0.7cm]
\clip(0,-2.5) rectangle (1,4.4);
\draw (-0.1,1.3) node[anchor=north west] {$\longrightarrow$};
\draw (0.1,1.6) node[anchor=north west] {$\omega$};
\end{tikzpicture}
\end{subfigure}
\begin{subfigure}[b]{0.45\textwidth}
\begin{tikzpicture}[line cap=round,line join=round,>=triangle 45,x=0.7cm,y=0.7cm]
\clip(-1.47,-0.3) rectangle (6.66,6.83);
\fill[line width=0pt,color=zzttqq,fill=zzttqq,fill opacity=0.5] (1.5,4.5) -- (2,4.5) -- (2,5) -- (1.5,5) -- cycle;
\draw[help lines,fill=zzttqq,fill opacity=0.1] {[smooth,samples=50,domain=1.5:2.0] plot(\x,{sin((\x)*180/pi)/2+abs((\x-3)^2-9)/8+0.3})} -- (2,5) {[smooth,samples=50,domain=2.0:1.5] -- plot(\x,{5})} -- (1.5,{sin((1.5)*180/pi)/2+abs((1.5-3)^2-9)/8+0.3}) -- cycle;
\draw[smooth,samples=100,domain=-1.4664688919423372:6.663202115363525] plot(\x,{sin(((\x))*180/pi)/2+abs(((\x)-3)^2-9)/8+0.3});
\draw (1.42,5.17) node[anchor=north west] {$Q$};
\draw (-3,4.5)-- (-3,6);
\draw (-2.75,5.5)-- (-2.75,4);
\draw (-2.5,6)-- (-2.5,3.5);
\draw (-2.25,3)-- (-2.25,4.5);
\draw (-2,2.75)-- (-2,7);
\draw (-1.75,2.25)-- (-1.75,4);
\draw (-1.5,6)-- (-1.5,2);
\draw (-1,7)-- (-1,1.5);
\draw (-1.25,1.75)-- (-1.25,3);
\draw (-0.75,1.5)-- (-0.75,2.5);
\draw (-0.5,1.25)-- (-0.5,5);
\draw (-0.25,1.25)-- (-0.25,2.5);
\draw (0,1.25)-- (0,4);
\draw (0.25,1.5)-- (0.25,3);
\draw (0.5,1.75)-- (0.5,4);
\draw (0.75,2)-- (0.75,3.5);
\draw (1,2.25)-- (1,4);
\draw (1.25,2.25)-- (1.25,3.5);
\draw (1.5,2.5)-- (1.5,4);
\draw (1.75,2.5)-- (1.75,3.5);
\draw (2,2.5)-- (2,4.5);
\draw (2.25,2.5)-- (2.25,3.5);
\draw (2.5,2.5)-- (2.5,4.5);
\draw (2.75,2.5)-- (2.75,3.5);
\draw (3,2.25)-- (3,4.5);
\draw (3.25,2.25)-- (3.25,3.5);
\draw (3.5,2)-- (3.5,4.5);
\draw (4,1.75)-- (4,4.5);
\draw (4.25,1.5)-- (4.25,3);
\draw (4.5,1.5)-- (4.5,5);
\draw (4.75,1.25)-- (4.75,2.5);
\draw (5,1.25)-- (5,3.5);
\draw (5.25,1.25)-- (5.25,2.5);
\draw (5.5,1)-- (5.5,3.5);
\draw (5.75,1)-- (5.75,2);
\draw (6,1)-- (6,3.5);
\draw (6.25,1.25)-- (6.25,3);
\draw (6.5,1.75)-- (6.5,3.5);
\draw (6.75,2)-- (6.75,3.5);
\draw (7,2.25)-- (7,4.5);
\draw (7,4.5)-- (7,7);
\draw (6.5,6)-- (6.5,3.5);
\draw (6,3.5)-- (6,6);
\draw (5.5,5)-- (5.5,3.5);
\draw (5,6)-- (5,3.5);
\draw (5,6)-- (5,7);
\draw (6,6)-- (6,7);
\draw (5,6)-- (7,6);
\draw (5,7)-- (7,7);
\draw (7,5.5)-- (6,5.5);
\draw (7,5)-- (5,5);
\draw (7,4.5)-- (5,4.5);
\draw (5,4)-- (7,4);
\draw (7,3.5)-- (5,3.5);
\draw (7,3.25)-- (6.5,3.25);
\draw (5,3)-- (7,3);
\draw (7,2.75)-- (6,2.75);
\draw (7,2.5)-- (5,2.5);
\draw (7,2.25)-- (6,2.25);
\draw (6.75,2)-- (5,2);
\draw (5.5,2.25)-- (5,2.25);
\draw (6.5,1.75)-- (5,1.75);
\draw (6.25,1.5)-- (5,1.5);
\draw (6.25,1.25)-- (5,1.25);
\draw (6,1)-- (5.5,1);
\draw (5,1.25)-- (4.75,1.25);
\draw (5,1.5)-- (4.25,1.5);
\draw (5,1.75)-- (4,1.75);
\draw (5,2)-- (3.5,2);
\draw (5,2.25)-- (3,2.25);
\draw (5,2.5)-- (2.5,2.5);
\draw (4.5,2.75)-- (2.5,2.75);
\draw (4.5,3)-- (2.5,3);
\draw (3.5,3.25)-- (2.5,3.25);
\draw (4.5,3.5)-- (2.5,3.5);
\draw (5,3)-- (4.5,3);
\draw (5,3.5)-- (4.5,3.5);
\draw (5,4)-- (2.5,4);
\draw (5,4.5)-- (2.5,4.5);
\draw (5,5)-- (2.5,5);
\draw (4,5)-- (4,4.5);
\draw (4,5)-- (4,7);
\draw (5,7)-- (3,7);
\draw (2.5,6)-- (4,6);
\draw (4,5.5)-- (2.5,5.5);
\draw (3.5,4.5)-- (3.5,6);
\draw (3,4.5)-- (3,7);
\draw (2.5,4.5)-- (2.5,6);
\draw (3,7)-- (1,7);
\draw (2,7)-- (2,4.5);
\draw (2.5,6)-- (0.5,6);
\draw (2.5,5.5)-- (0.5,5.5);
\draw (2.5,5)-- (0.5,5);
\draw (2.5,4.5)-- (0.5,4.5);
\draw (2.5,4)-- (0.5,4);
\draw (2.5,3.5)-- (0.5,3.5);
\draw (2.5,3.25)-- (0.5,3.25);
\draw (1.5,4)-- (1.5,6);
\draw (1,7)-- (1,4);
\draw (3.75,3)-- (3.75,2);
\draw (2.5,2.5)-- (0,2.5);
\draw (2.5,2.75)-- (0,2.75);
\draw (2.5,3)-- (0,3);
\draw (0.5,3.5)-- (0,3.5);
\draw (0.5,4)-- (0,4);
\draw (1.25,2.25)-- (0,2.25);
\draw (0.75,2)-- (-1,2);
\draw (0.5,1.75)-- (-1,1.75);
\draw (0.25,1.5)-- (-1,1.5);
\draw (0,1.25)-- (-0.5,1.25);
\draw (0,2.25)-- (-1.25,2.25);
\draw (0,2.5)-- (-1.25,2.5);
\draw (0,3)-- (-1,3);
\draw (0,3.5)-- (-1,3.5);
\draw (0,4)-- (-1,4);
\draw (0.5,4)-- (0.5,6);
\draw (0,4)-- (0,7);
\draw (-1,6)-- (0.5,6);
\draw (0.5,5.5)-- (0,5.5);
\draw (0.5,5)-- (-1,5);
\draw (0.5,4.5)-- (-1,4.5);
\draw (-1,7)-- (1,7);
\draw (-1.25,1.75)-- (-1,1.75);
\draw (-1,2)-- (-1.5,2);
\draw (-1.25,2.25)-- (-1.75,2.25);
\draw (-1.25,2.5)-- (-1.75,2.5);
\draw (-1,2.75)-- (-2,2.75);
\draw (-1,3)-- (-2.25,3);
\draw (-1.5,3.25)-- (-2.25,3.25);
\draw (-1,3.5)-- (-2.5,3.5);
\draw (-1.5,3.75)-- (-2.5,3.75);
\draw (-1,4)-- (-2.75,4);
\draw (-2,4.25)-- (-2.75,4.25);
\draw (-1,4.5)-- (-3,4.5);
\draw (-3,4.75)-- (-2.5,4.75);
\draw (-1.5,5)-- (-3,5);
\draw (-2.5,5.25)-- (-3,5.25);
\draw (-1.5,5.5)-- (-3,5.5);
\draw (-1,6)-- (-3,6);
\draw (-1,5.5)-- (-1.5,5.5);
\draw (-1,5)-- (-1.5,5);
\draw (5,6)-- (4,6);
\draw (1.34,1.42) node[anchor=north west] {$\mathbf{Sh}(Q)$};
\begin{scriptsize}
\end{scriptsize}
\end{tikzpicture}
\end{subfigure}
\caption{We divide $\R^d_+$ in pre-images of Whitney cubes.}\label{figpseudocubes}
\end{figure}

Consider also the change of variables $\omega: \R^d\to \R^d$, $\omega(x', x_d)= (x', x_d+A(x'))$ where $A$ is the Lipschitz function whose graph coincides with $\partial \Omega$, and to every Whitney cube $Q$ assign the set $Q_\omega=\omega^{-1}(Q)$ and its shadow $\mathbf{Sh}_\omega(Q)=\omega^{-1}({\mathbf{Sh}}(Q))$ (see Figure \ref{figpseudocubes}). Then, for every $x \in \R^d$ we define 
\begin{equation}\label{eqdefgtilde2}
{g}_0(x):={\widetilde{g}}(\omega(x))|{\rm det}(Dw(x))|,
\end{equation}
where ${\rm det}(Dw(\cdot))$ stands for the determinant of the jacobian matrix. Note that still $\norm{{g_0}}_{L^1}= \norm{\widetilde{g}}_{L^1}= \mu(\mathbf{Sh}(P))$, and
\begin{equation}\label{eqbrutalissim}
\langle \mathcal{A}^*g, f\rangle= \int_\Omega f\, \widetilde{g}\, dm  = \int_{\R^d_+} f\circ \omega\cdot {g}_0\, dm.
\end{equation}

The key of the proof is using 
\begin{equation}\label{eqdefh}
h(x):= N[(R^{(d-1)}_d g_0) d\sigma](x)-N g_0(x),
\end{equation}
which is the $L^1_{loc}(\R^d_+)$ solution of the Neumann problem 
\begin{equation}\label{eqneumann2}
\int_{\R^d_+}\nabla \phi \cdot \nabla h\, dm=\int_{\R^d_+} \phi \, {g}_0 \, dm \mbox{\quad\quad for every } \phi\in C^\infty_c(\overline{\R^d_+}),
\end{equation}
provided by Lemma \ref{lemNeumann}. 

We divide the proof in four claims.
\begin{claim}\label{clA*} If $\phi\in C^\infty_c (\overline{\R^d_+})$, then
\begin{equation*}
\langle \mathcal{A}^*g , \phi\circ \omega^{-1} \rangle=\int_{\R^d_+}  \nabla \phi \cdot \nabla h \, dm .
\end{equation*}
\end{claim}
\begin{proof}
Since $\omega$ is bilipschitz, the Sobolev $W^{1,p}$ norms before and after the change of variables $\omega$  are equivalent (see \cite[Theorem 2.2.2]{ziemer}). In particular, for $\phi\in C^\infty_c (\overline{\R^d_+})$, $\phi\circ \omega^{-1}\in W^{1,p}(\Omega)$ and we can use \rf{eqbrutalissim} and \rf{eqneumann2}. 
\end{proof}

Now we look for bounds for $\norm{\partial_d h}_{L^{p'}(\mathbf{Sh}_\omega(P))}$. The H\"older inequality together with a density argument would give us the bound
$$\norm{\mathcal{A}^*g}_{(W^{1,p}(\Omega))^*}\lesssim \norm{\nabla h}_{L^{p'}}+\mu(\mathbf{Sh}(P)),$$
with constants depending on the window size $R$, but we shall need a kind of converse. 

\begin{claim}\label{clacotagradienthk}
One has
\begin{align*}
\norm{\partial_d h}_{L^{p'}(\mathbf{Sh}_\omega(P))} \lesssim \norm{\mathcal{A}^*g}_{(W^{1,p}(\Omega))^*}+ \mu(\mathbf{Sh}(P)).
\end{align*}
\end{claim}
\begin{proof}
Take a ball $B_1$ containing $\omega^{-1}(4\mathcal{Q})$. The duality between $L^p$ and $L^{p'}$ gives us the bound
\begin{align*}
\norm{\partial_d h}_{L^{p'}(\mathbf{Sh}_\omega(P))}\lesssim \sup_{\substack{\phi\in C^\infty_c(B_1\cap\R^d_+)\\ \norm{\phi}_p\leq 1}} \left|\int \phi\, \partial_d h\,  dm \right|.
\end{align*}
To use the full potential of the Fourier transform, consider $h^s$ to be the symmetric extension of $h$ with respect to the hyperplane $x_d=0$, $h^s(x',x_d)=h(x', |x_d|)$. One can see that $h^s$ has global weak derivatives $\partial_j h^s=(\partial_j h)^s$ for $1\leq j\leq d-1$ and $\partial_d h^s(x', x_d)=-\partial_d h(x',-x_d)$ for every $x_d<0$.  Thus,
\begin{align}\label{eqdualitat}
\norm{\partial_d h}_{L^{p'}(\mathbf{Sh}_\omega(P))}\lesssim \sup_{\substack{\phi \in C^\infty_c(B_1)\\ \norm{\phi}_p\leq 1}}\left| \int \phi\, \partial_d h^s \, dm \right|.
\end{align}

Given $\phi \in C^\infty_c(B_1)$, consider the function $\widetilde{\phi} (x)= \phi(x)-\phi(x- 2\,r_1\, e_d)$, where $e_d$ denotes the unit vector in the $d$-th direction and $r_1=\frac12\diam(B_1)$, and take 
\begin{equation}\label{eqdefIphi}
I_\phi(x)=\int_{-\infty}^{x_d} \widetilde{\phi}(x',t) dt.
\end{equation}
Then, we have $I_\phi \in C^\infty_c(3B_1)$ with $\partial_d I_\phi \equiv {\phi}$ in the support of $\phi$ and $\norm{\partial_d I_\phi}_p^p= 2 \norm{\phi}_p^p$. Thus,
\begin{equation}\label{eqdoblasuport}
\int \phi\, \partial_d h^s dm = \langle \partial_d I_\phi, \partial_d h^s\rangle - \int_{3B_1\setminus B_1} \partial_d I_\phi \, \partial_d h^s dm 
\end{equation}
where we use the brackets for the dual pairing of test functions and distributions. Using H\"older's inequality and the estimate \rf{eqboundpartialhpointwise} one can see that the error term in \rf{eqdoblasuport} is bounded by
\begin{equation}\label{eqerrorxic}
 \int_{3B_1\setminus B_1} |\partial_d I_\phi  \, \partial_d h^s| dm \leq \norm{\partial_d I_\phi}_p \norm{\partial_d h^s}_{L^{p'}(3B_1\setminus B_1)}\leq C \norm{\phi}_{L^p} \mu(\mathbf{Sh}(P)) .
 \end{equation}
Note that $C$ only depends on $r_1$, which can be expressed as a function of the Lipschitz constant $\delta_0$ and the window side-length $R$. 

It is well known that the vectorial $d$-dimensional Riesz transform, 
$$R^{(d)}f(x)=\frac{1}{2w_{d+1}} {\rm p.v.} \int_{\R^d} \frac{x-y}{|x-y|^{d+1}} f(y)dy \mbox{ for every } f\in \mathcal{S}$$
is, in fact, a Calder\'on-Zygmund operator and, thus, it can be extended to a bounded operator in $L^p$.
Writing $R^{(d)}_{i}$ for the $i$-th component of the transform and $R^{(d)}_{ij}:=R^{(d)}_i\circ R^{(d)}_j$ for the double Riesz transform in the $i$-th and $j$-th directions, one has $\partial_{ii} I_\phi=R^{(d)}_{ii}\Delta I_\phi=\Delta R^{(d)}_{ii} I_\phi$ by a simple Fourier argument (see \cite[Section 4.1.4]{grafakos}). Thus, writing $f_\phi=R^{(d)}_{dd} I_\phi$, we have $\Delta f_\phi=\partial_{dd} I_\phi$, so
\begin{align}\label{eqamblesprimitives}
\langle \partial_d I_\phi, \partial_d h^s\rangle
	& = -\langle \partial_{dd} I_\phi, h^s\rangle 
	=   -\langle \Delta f_\phi, h^s\rangle.
\end{align}

Let $f_r= \varphi_r f_\phi$ with  $\varphi_r$ a bump function in $C^\infty_c(B_{2r}(0))$ such that $\chi_{B_r(0)}\leq \varphi_r\leq \chi_{B_{2r}(0)}$, $|\nabla \varphi_r|\lesssim 1/r$ and $|\Delta \varphi_r|\lesssim 1/r^2$. We claim that
\begin{align}\label{eqapareixriesz}
-\langle \Delta f_\phi, h^s\rangle = -\lim_{r\to \infty} \langle \Delta f_r, h^s\rangle=\lim_{r\to \infty} \langle \nabla f_r, \nabla h^s\rangle,
\end{align}
The advantage of $f_r$ is that it is compactly supported, while only the laplacian of $f_\phi$ is compactly supported.
Recall that $\Delta f_\phi=\partial_{dd} I_\phi \in C^\infty_c(\R^d)$ so, by the hypoellipticity of the Laplacian operator, $f_\phi \in C^\infty(\R^d)$ itself (see \cite[Corollary (2.20)]{folland}). Thus, the second equality in \rf{eqapareixriesz} comes from the definition of distributional derivative. It remains to prove
\begin{equation}\label{eqlaplaciares}
\langle \Delta f_r-\Delta f_\phi, h^s \rangle \xrightarrow{r\to\infty} 0.
\end{equation}
 
Since $\Delta f_\phi$ is compactly supported, taking $r$ big enough we can assume that
$$\Delta [(\varphi_r-1)f_\phi]=(\Delta \varphi_r) f_\phi + 2 \nabla \varphi_r \cdot \nabla f_\phi,$$
so
$$\left|\langle \Delta f_r-\Delta f_\phi, h^s \rangle\right|\lesssim \int_{B_{2r}(0)\setminus B_r(0)}\left( \frac{|f_\phi| |h^s|}{r^2}+\frac{|\nabla f_\phi| |h^s| }{r}\right) dm.$$
It is left for the reader to prove \rf{eqlaplaciares} plugging \rf{eqboundhpointwise} in this expression. One only needs to use that $f_\phi$ and $\nabla f_\phi$ are in every $L^q$ space for $1<q<\infty$.

Back to \rf{eqapareixriesz}, we can use $f_r^s(x',x_d):=f_r(x', -x_d)$ by a change of variables to obtain
\begin{align}\label{eqpassaaldual}
\int \nabla f_r \cdot \nabla h^s\,dm
	& = \int_{\R^d_+} \nabla f_r \cdot \nabla h\,dm+\int_{\R^d_+} \nabla f_r^s \cdot \nabla h \,dm =\langle \mathcal{A}^*g, (f_r+f_r^s)\circ \omega^{-1}\rangle
\end{align}
by means of Claim \ref{clA*}. Summing up, by \rf{eqdoblasuport}, \rf{eqerrorxic}, \rf{eqamblesprimitives}, \rf{eqapareixriesz} and \rf{eqpassaaldual} and letting $r$ tend to infinity, we get
\begin{align}\label{eqresum}
\left|\int \phi \, \partial_d h^s \,  dm\right| 
	& \lesssim \left|\langle \mathcal{A}^*g, (f_\phi+ f_\phi^s)\circ \omega^{-1}\rangle\right| + \norm{\phi}_{L^p}\mu(\mathbf{Sh}(P)) .
\end{align}

Using H\"older inequality in \rf{eqdefIphi} we have that $\norm{I_\phi}_p\leq C \norm{\phi}_p$.
Now, $\partial_j f_\phi= \partial_j R^{(d)}_{dd} I_\phi= R^{(d)}_{dj}\partial_d I_\phi$, so using the boundedness of the $d$-dimensional Riesz transform in $L^p$ we get
\begin{equation}\label{acotanormasobolev}
\norm{f_\phi}_{W^{1,p}}=\norm{f_\phi}_{L^p}+\norm{\nabla f_\phi}_{L^p}\leq C(\norm{I_\phi}_{p}+\norm{\partial_d I_\phi}_{p})\leq C \norm{\phi}_p .
\end{equation}

Summing up, by \rf{eqdualitat}, \rf{eqresum} and \rf{acotanormasobolev} we have got that
\begin{align*}
\norm{\partial_d h}_{L^{p'}(\mathbf{Sh}_\omega(P))}
	& \lesssim \sup_{\norm{f}_{W^{1,p}(\R^d)}\leq 1} \left|\langle \mathcal{A}^*g, f\circ \omega^{-1} \rangle\right|+ \mu(\mathbf{Sh}(P)).
\end{align*}

On the other hand, by \cite[Theorem 2.2.2]{ziemer} $\norm{f\circ \omega^{-1}}_{W^{1,p}(\Omega)}\approx \norm{f}_{W^{1,p}(\R^d_+)}$ for every $f$, so we have
\begin{align*}
\norm{\partial_d h}_{L^{p'}(\mathbf{Sh}_\omega(P))} \lesssim \sup_{\norm{f}_{W^{1,p}(\Omega)}\leq 1} \left|\langle \mathcal{A}^*g,f\rangle \right|+ \mu(\mathbf{Sh}(P)) = \norm{\mathcal{A}^*g}_{(W^{1,p}(\Omega))^*}+ \mu(\mathbf{Sh}(P)),
\end{align*}
that is Claim \ref{clacotagradienthk}.
\end{proof}

Next we stablish the relation between \rf{eqcotaobjectiu} and Claim \ref{clacotagradienthk}.
\begin{claim}\label{cl12} One has 
\begin{align} \label{eq12}
\nonumber\sum_{{Q\leq P}}  \mu(\mathbf{Sh}(Q))^{p'}\ell(Q)^{\frac{p-d}{p-1}}
	& \lesssim \norm{\partial_d h}_{L^{p'}(\mathbf{Sh}_\omega(P))}^{p'}  +\sum_{Q\leq P} \int_{Q_\omega} \left(\int_{\{z: z_d> x_d\}} \frac{z_d-x_d}{|x-z|^{d}} \widetilde{g}(\omega(z)) dz\right)^{p'}  dx\\
	& = \circled{\rm 1}+\circled{\rm 2}.
\end{align}
\end{claim}

\begin{proof}
Note that in \rf{eqdefh} we have defined $h$ in such a way that
 \begin{align*}
 \partial_d h(x)
	& = R^{(d-1)}_d[(R^{(d-1)}_d {g}_0) d\sigma](x)-R^{(d-1)}_d {g}_0(x)\\
	& = \frac{-1}{w_d}\int_{\R^d_+} \left(\frac{ 2 x_d z_d}{w_d } \int_{\partial \R^d_+}  \frac{d\sigma(y)}{|y-z|^{d} |x-y|^d} + \frac{x_d-z_d}{|x-z|^{d}}\right) {g}_0(z) dz.
\end{align*}
Given $x , z \in \R^d_+$, consider the kernel of $R^{(d-1)}_d[(R^{(d-1)}_d (\cdot)) d\sigma]-R^{(d-1)}_d (\cdot)$, 
\begin{align*}
G(x,z)
	& = \frac{ 2 x_d z_d}{w_d } \int_{\partial\R^d_+}  \frac{d\sigma(y)}{|y-z|^{d} |x-y|^d} + \frac{x_d-z_d}{|x-z|^{d}},
\end{align*} 
so that
\begin{align}\label{equsdelnucli}
 \partial_d h(x)
	& = \frac{-1}{w_d}\int_{\R^d_+} G(x,z){g}_0(z) \, dz.
\end{align}
We have the trivial bound
\begin{equation}\label{eqCotaDeG}
G(x,z)+\frac{z_d-x_d}{|x-z|^{d}}\, \chi_{\{z_d>x_d\}}(z) \geq 0,
\end{equation}
but given any Whitney cube $Q\leq P$, if $x \in Q_\omega$ and $z\in  {\mathbf{Sh}}_\omega(Q)$ we can improve the estimate. In this case,
$$\int_{\partial\R^d_+\cap \overline{{\mathbf{Sh}}_\omega(Q)}}  \frac{d\sigma(y)}{|y-z|^{d}}\gtrsim \int_{\partial\R^d_+\cap \overline{\omega^{-1}({\mathbf{Sh}}(\omega(z)))}}  \frac{d\sigma(y)}{|y-z|^{d}} \approx \frac{1}{z_d}$$
and, thus,
\begin{align}\label{eqCotaDeGMillorada}
\nonumber G(x,z)+\frac{z_d-x_d}{|x-z|^{d}}\, \chi_{\{z_d>x_d\}}(z)
	& \geq \frac{ 2 x_d z_d}{w_d } \int_{\partial\R^d_+}  \frac{d\sigma(y)}{|y-z|^{d} |x-y|^d} \\
	& \gtrsim \frac{ \ell(Q) z_d}{\ell(Q)^d } \int_{\partial\R^d_+\cap \overline{{\mathbf{Sh}}_\omega(Q)}}  \frac{d\sigma(y)}{|y-z|^{d}} \gtrsim  \frac{ \ell(Q)}{\ell(Q)^d }.
\end{align}

By the Lipschitz character of $\Omega$ we know that $|\det{D\omega}(z)|\approx 1$ for every $z\in\R^d_+$. Thus, by \rf{eqdefgtilde} and \rf{eqdefgtilde2}, given $Q\leq P$ we have
\begin{equation*}
\mu(\mathbf{Sh}(Q))=\sum_{{S\leq Q}}\mu(S) \lesssim \int_{{\mathbf{Sh}}(Q)} {\widetilde{g}}(w)dw \approx  \int_{{\mathbf{Sh}}_\omega(Q)} g_0(z)dz  .
\end{equation*}
For every $x\in Q_\omega$, using \rf{eqCotaDeG} and \rf{eqCotaDeGMillorada} first and then \rf{equsdelnucli} we get 
\begin{align*}
\mu(\mathbf{Sh}(Q))
	& \lesssim \int_{\R^d_+} G(x,z) g_0(z) \, dz\,  \ell(Q)^{d-1} + \int_{\{z : z_d> x_d\}} \frac{z_d-x_d}{|x-z|^{d}} \widetilde{g}(\omega(z)) \, dz\,  \ell(Q)^{d-1}\\
	& \lesssim |\partial_d h (x)| \ell(Q)^{d-1} + \int_{\{z : z_d> x_d\}} \frac{z_d-x_d}{|x-z|^{d}} \widetilde{g}(\omega(z)) \,dz\,  \ell(Q)^{d-1}.
\end{align*}
Then, raising to the power $p'$, averaging with respect to $x\in Q_\omega$ and summing with respect to $Q\leq P$ with weight $\rho_\mathcal{W}(Q)=\ell(Q)^{\frac{p-d}{p-1}}$, since $(d-1)p' +\frac{p-d}{p-1}-d=0$, we get Claim \ref{cl12}.

\end{proof}

Finally, we bound the negative contribution of the $(d-1)$-dimensional Riesz transform in \rf{eq12}, that is we bound \circled{2}. 
\begin{claim}\label{clacotariesz}
One has
\begin{align}\label{eqbound1}
\circled{{\rm 2}}= \sum_{Q\leq P} \int_{Q_\omega} \left(\int_{\{z: z_d> x_d\}} \frac{z_d-x_d}{|x-z|^{d}} \widetilde{g}(\omega(z)) dz\right)^{p'}  dx
	&  \lesssim \mu({\mathbf{Sh}}(P)).
\end{align}
\end{claim}

\begin{proof}
Consider $x, z \in \R^d_+$ with $x_d<z_d$ and two Whitney cubes $Q$ and $S$ such that $x\in Q_\omega$ and $z \in \omega^{-1}(3S)\setminus \omega^{-1}(3Q)$, then 
$$\frac{z_d-x_d}{|x-z|^{d}} \lesssim \frac{\dist(\omega(z), \partial\Omega)}{\Dist(S,Q)^d}\approx \frac{\ell(S)}{\Dist(S,Q)^d}.$$ 
On the other hand, when $3S \cap 3Q \neq \emptyset$, 
$$\int_{\omega^{-1}(3Q)}\frac{|z_d-x_d|}{|x-z|^{d}}\, dz \lesssim \ell(Q)\approx \ell(S).$$ 
From the definition of $\widetilde{g}$ in \rf{eqdefgtilde} it follows that ${\widetilde{g}}(\omega(z))\lesssim \sum_{L\in \mathcal{W}}\chi_{3L}(\omega(z)) \frac{\mu(L)}{m(3L)}$. Bearing all these considerations in mind, one gets
\begin{align*}
\circled{2}
	  \lesssim \sum_{Q\leq P} \ell(Q)^d \left(\sum_{S\leq P} \frac{\mu(S) \ell(S)}{\Dist(S,Q)^{d}}\right)^{p'} .
\end{align*}

Consider a fixed $\epsilon>0$.  One can apply first the H\"older inequality and then  \rf{eqcotamesura} to get 
\begin{align*}
\circled{2}
	&  \lesssim \sum_{Q\leq P} \ell(Q)^d \left(\sum_{S\leq P} \frac{\mu(S) \ell(S)^{1-\epsilon p'}}{\Dist(S,Q)^{d}}\right)\left(\sum_{S\leq P} \frac{\mu(S) \ell(S)^{1+\epsilon p}}{\Dist(S,Q)^{d}}\right)^{\frac{p'}{p}} \\
	&  \lesssim \sum_{Q\leq P} \ell(Q)^d \left(\sum_{S\leq P} \frac{\mu(S) \ell(S)^{1-\epsilon p'}}{\Dist(S,Q)^{d}}\right)\left(\sum_{S\leq P} \frac{\ell(S)^{d-p+1+\epsilon p}}{\Dist(S,Q)^{d}}\right)^{\frac{p'}{p}} .
\end{align*}
By Lemma \ref{lembad-1}, the last sum is bounded by $C\ell(Q)^{-p+1+\epsilon p} $ with $C$ depending on $\epsilon$ as long as  $d > d-p+1+\epsilon p>d-1$, that is, when $\frac{p-2}{p}<\epsilon<\frac{p-1}p$.
Thus, 
\begin{align*}
\circled{2}
	&  \lesssim \sum_{Q\leq P} \sum_{S\leq P} \frac{\mu(S) \ell(S)^{1-\epsilon p'}\ell(Q)^{d+(\epsilon -1)p'+p'/p}}{\Dist(S,Q)^{d}} \\
	&  = \sum_{S\leq P} \mu(S) \ell(S)^{1-\epsilon p'} \sum_{Q\leq P} \frac{\ell(Q)^{d-1+\epsilon p' }}{\Dist(S,Q)^{d}} .
\end{align*}
Again by Lemma \ref{lembad-1}, the last sum does not exceed $C \ell(S)^{-1+\epsilon p'}$ with $C$ depending on $\epsilon$ as long as $d> d-1+\epsilon p' >d-1$, that is when $0<\epsilon<\frac{1}{p'}=\frac{p-1}p$. Summing up, we need
$$\max\left\{\frac{p-2}{p},0\right\}<\epsilon<\frac{p-1}p.$$
 Such a choice of $\epsilon$ is possible for every $p>1$. Thus, 
\begin{align*}
\circled{2}
	&  \lesssim \sum_{S\leq P} \mu(S) =\mu({\mathbf{Sh}}(P)).
\end{align*}
\end{proof}

Now we can finish the proof of Proposition \ref{propomesuradecarleson}. The first term in \rf{eq12} is bounded due to Claim \ref{clacotagradienthk} by
\begin{equation}\label{eqacotagradientdeltot}
\circled{1}=\norm{\partial_d h}_{L^{p'}(\mathbf{Sh}_\omega(P))}^{p'}\lesssim \norm{\mathcal{A}^*g}_{(W^{1,p}(\Omega))^*}^{p'}+ \mu(\mathbf{Sh}(P))^{p'}.
\end{equation}
Being $\mu$ a finite measure, $\mu(\mathbf{Sh}(P))^{p'}\leq \mu(\mathbf{Sh}(P))\mu(\delta_0\mathcal{Q})^{p'-1}$ and, thus, the bounds \rf{eqbound1} and \rf{eqacotagradientdeltot} combined with \rf{eq12} prove \rf{eqcotaobjectiu}, leading to
$$\sum_{Q\leq P} \mu(\mathbf{Sh}(Q))^{p'}\ell(Q)^{\frac{p-d}{p-1}}
	 \lesssim \mu({\mathbf{Sh}}(P)).$$
\end{proof}

For the sake of clarity, we restate Theorem \ref{theoTb} in terms of Carleson measures.
\begin{theorem}\label{theomain2}
Given a Calder\'on-Zygmund smooth operator of order 1, a Lipschitz domain $\Omega$ and $1<p<\infty$, the following statements are equivalent:
\begin{enumerate}
\item Given any window $\mathcal{Q}$ with a properly oriented Whitney covering, and given any Whitney cube $P\subset \delta_0 \mathcal{Q}$, one has
$$\sum_{Q\leq P} \left(\int_{{\mathbf{Sh}}(Q)} |\nabla T_\Omega  (\chi_\Omega) |^p \, dm \right)^{p'} \ell(Q)^{\frac{p-d}{p-1}}\leq C \int_{{\mathbf{Sh}}(P)} |\nabla T_\Omega  (\chi_\Omega) |^p \, dm.$$
\item $T_\Omega $ is a bounded operator on $W^{1,p}(\Omega)$.
\end{enumerate}
\end{theorem}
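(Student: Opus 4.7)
We prove the implication $2\Rightarrow 1$; the converse is the $n=1$ case of Theorem~\ref{theocarleson} combined with Proposition~\ref{propocarlesoncontinu}, noting that polynomials of degree at most $0$ are constants, hence multiples of $\chi_\Omega$. Suppose therefore that $T_\Omega$ is bounded on $W^{1,p}(\Omega)$, fix any $R$-window $\mathcal{Q}$ with a properly oriented Whitney covering $\mathcal{W}$, and set $d\mu:=|\nabla T_\Omega \chi_\Omega|^p\,dm$ restricted to $\delta_0\mathcal{Q}$. The plan is to verify the hypotheses of Proposition~\ref{propomesuradecarleson} for $\mu$; the conclusion of that proposition, rephrased discretely via Proposition~\ref{propocarlesoncontinu}, is exactly statement~$1$ of the theorem for the window $\mathcal{Q}$, and since $\mathcal{Q}$ is arbitrary, the full claim follows.

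Finiteness of $\mu$ and the support condition are immediate from the boundedness of $T_\Omega$ and from our restriction. The bound $\mathcal{A}\colon W^{1,p}(\Omega)\to L^p(\mu)$ is a direct consequence of the Key Lemma (Lemma~\ref{lemklemma}) applied with $n=1$: since $\mathbf{P}^{0}_{3Q}f=\fint_{3Q}f\,dm$ is a constant and $T_\Omega$ is linear, one has
\[
 \nabla T_\Omega(\mathbf{P}^{0}_{3Q}f)(x)=\mathcal{A}f(x)\,\nabla T_\Omega\chi_\Omega(x)\qquad\text{for }x\in Q,
\]
so $\|\mathcal{A}f\|_{L^p(\mu)}^p=\sum_{Q\in\mathcal{W}}\|\nabla T_\Omega(\mathbf{P}^{0}_{3Q}f)\|_{L^p(Q)}^p\leq C\|f\|_{W^{1,p}(\Omega)}^p$ by the Key Lemma.

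The crux of the argument is the pointwise estimate $\mu(\mathbf{Sh}(Q))\leq C\ell(Q)^{d-p}$ for every Whitney cube $Q\subset\mathcal{Q}$. Since $Q$ is Whitney we have $\dist(Q,\partial\Omega)\approx\ell(Q)$, so both the horizontal and vertical extents of $\mathbf{Sh}(Q)$ are $\lesssim\ell(Q)$; in particular $\mathbf{Sh}(Q)$ fits inside a Euclidean box $\mathcal{U}_Q$ of side $\approx\ell(Q)$ with $\dist(\mathbf{Sh}(Q),\partial\mathcal{U}_Q)\gtrsim\ell(Q)$. Choose a bump $\phi_Q\in C_c^\infty$ with $\chi_{\mathcal{U}_Q}\leq\phi_Q\leq\chi_{2\mathcal{U}_Q}$ and $\|\nabla\phi_Q\|_\infty\lesssim\ell(Q)^{-1}$, so that $\|\phi_Q\|_{W^{1,p}(\Omega)}^p\lesssim\ell(Q)^{d-p}$. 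For $x\in\mathbf{Sh}(Q)$ the function $\chi_\Omega(1-\phi_Q)$ vanishes on the ball $B(x,c\ell(Q))$, so Lemma~\ref{lemderivaT} together with the kernel size estimate gives
\[
 |\nabla T_\Omega \chi_\Omega(x)-\nabla T_\Omega \phi_Q(x)|=|\nabla T(\chi_\Omega(1-\phi_Q))(x)|\lesssim \ell(Q)^{-1}.
\]
Raising to the $p$-th power, integrating over $\mathbf{Sh}(Q)$, and using the $W^{1,p}$-boundedness of $T_\Omega$,
\[
 \mu(\mathbf{Sh}(Q))\lesssim \|\nabla T_\Omega\phi_Q\|_{L^p(\Omega)}^p+\ell(Q)^{-p}m(\mathbf{Sh}(Q))\lesssim\|\phi_Q\|_{W^{1,p}(\Omega)}^p+\ell(Q)^{d-p}\lesssim\ell(Q)^{d-p}.
\]

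With all the hypotheses of Proposition~\ref{propomesuradecarleson} verified, $\mu$ is $p$-Carleson for $\mathcal{Q}$ in the sense of Definition~\ref{defmesuraCarleson}; by Proposition~\ref{propocarlesoncontinu} this is precisely statement~$1$ of Theorem~\ref{theomain2}. The hard part is the pointwise step: it is essential that the Whitney character of $Q$ keeps $\mathbf{Sh}(Q)$ inside a box of comparable size, so that a single cut-off of scale $\ell(Q)$ splits $\chi_\Omega$ into a local part controlled by the boundedness assumption and a non-local part whose $\nabla T$ is bounded pointwise by a standard Calder\'on-Zygmund tail.
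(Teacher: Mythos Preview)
Your approach matches the paper's: invoke Theorem~\ref{theocarleson} for $1\Rightarrow 2$, and for $2\Rightarrow 1$ use the Key Lemma to get boundedness of the averaging operator $\mathcal{A}$ into $L^p(\mu)$, verify the a priori bound $\mu(\mathbf{Sh}(Q))\lesssim\ell(Q)^{d-p}$ via a bump-function splitting, and appeal to Proposition~\ref{propomesuradecarleson}.

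There is one genuine gap you should close. Proposition~\ref{propomesuradecarleson} is stated for a \emph{special} Lipschitz domain, not a bounded one; you apply it directly to $\Omega$. The paper handles this by passing to the special Lipschitz domain $\widetilde\Omega$ that agrees with $\Omega$ inside the window $\mathcal{Q}$, choosing a Whitney covering $\widetilde{\mathcal{W}}$ of $\widetilde\Omega$ coinciding with $\mathcal{W}$ on $\delta_0\mathcal{Q}$, and observing that the Key Lemma bound \rf{eqBoundSumaMu} on $\Omega$ implies boundedness of $\mathcal{A}\colon W^{1,p}(\widetilde\Omega)\to L^p(\mu)$ (multiply $f\in W^{1,p}(\widetilde\Omega)$ by a cutoff supported in $\mathcal{Q}$ and use Leibniz). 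This passage is short but not optional, since the tree structure and the Neumann-problem machinery underlying Proposition~\ref{propomesuradecarleson} are set up on the half-space model of $\widetilde\Omega$.

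A minor remark: in your bound $\|\phi_Q\|_{W^{1,p}(\Omega)}^p\lesssim\ell(Q)^{d-p}$ the $L^p$ part actually contributes $\ell(Q)^d$, so you are implicitly using $\ell(Q)\lesssim R$; the paper records this as $\ell(Q)^d+\ell(Q)^{d-p}\lesssim(R^p+1)\ell(Q)^{d-p}$.
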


\begin{proof}
The implication $1 \implies 2$ is Theorem \ref{theocarleson}. 

To prove that $2 \implies 1$ we will use the previous proposition. Let us assume that we have a properly oriented Whitney covering $\mathcal{W}$ associated to an $R$-window $\mathcal{Q}$ of a Lipschitz domain $\Omega$, where we assume that the window $\mathcal{Q}=Q(0,\frac{R}2)$ is of side-length $R$ and centered at the origin.
Note that since $T_\Omega$ is bounded in $W^{1,p}(\Omega)$ then, by the Key Lemma, 
\begin{equation}\label{eqBoundSumaMu}
\sum_{Q\in\mathcal{W}} \left|\fint_{3Q} f \, dm\right|^p \int_Q|\nabla T_\Omega (\chi_\Omega)(x)|^p \, dx \lesssim \norm{f}_{W^{1,p}(\Omega)}.
\end{equation}

Consider the Lipschitz function $A:\R^{d-1}\to \R$ whose graph coincides with the boundary of $\Omega$ in $\mathcal{Q}$. We say that ${\widetilde{\Omega}}$ is the special Lipschitz domain defined by the graph of $A$ that coincides with $\Omega$ in the window $\mathcal{Q}$. One can consider a Whitney covering $\tilde{\mathcal{W}}$ associated to $\widetilde{\Omega}$ such that it coincides with $\mathcal{W}$ in $\delta_0 \mathcal{Q}$. Consider the averaging operator
$${\mathcal{A}}f(x):=\sum_{Q\in\tilde{\mathcal{W}}} \chi_{Q}(x) \fint_{3Q} f(y) \, dy \mbox{\quad for } f\in W^{1,p}(\widetilde{\Omega}). $$
Writing $d\mu(x):=|\nabla T(\chi_\Omega)(x)|^p\, \chi_{\delta_0 \mathcal{Q}}(x) \,dx$, it is easy to see that \rf{eqBoundSumaMu} implies the boundedness of 
 $${\mathcal{A}}: W^{1,p}({\widetilde{\Omega}}) \to L^p(\mu)$$
(consider an appropriate bump function and use the Leibnitz formula). 

In order to apply Proposition \ref{propomesuradecarleson}, we only need to show that $\mu({\mathbf{Sh}}(Q))\leq C\ell(Q)^{d-p}$ for every Whitney cube $Q\subset \mathcal{Q}$, which in particular implies that $\mu$ is finite.
Consider a bump function $\varphi_Q$ such that $\chi_{{\mathbf{Sh}}(2Q)}\leq \varphi_Q \leq \chi_{{\mathbf{Sh}}(4Q)}$ with $|\nabla \varphi_Q| \lesssim \frac1{\ell(Q)}$.

Then, 
\begin{align*}
\mu({\mathbf{Sh}}(Q))
	& = \int_{\mathbf{Sh}(Q)\cap \delta_0\mathcal{Q}}|\nabla T \chi_{\Omega} (x)|^p dx \leq \int_{\mathbf{Sh}(Q)}|\nabla T (\chi_{\Omega} - \varphi_Q) (x)|^p dx + \int_{\Omega} |\nabla T \varphi_Q (x)|^p dx .
\end{align*}
With respect to the first term, notice that given $x\in\mathbf{Sh}(Q)$,   $\dist(x, \supp (\chi_{\Omega} - \varphi_Q)) > \frac12\ell(Q)$ so  Lemma \ref{lemderivaT} together with \rf{eqCZKderivades} allows us to write
$$|\nabla T  (\chi_{\Omega} - \varphi_Q) (x)|\lesssim \int_{\Omega\setminus \mathbf{Sh}(2Q)} \frac{1}{|y-x|^{d+1}}\, dy\lesssim \frac{1}{\ell(Q)}.$$
Being $\Omega$ a Lipschitz domain, $m(\mathbf{Sh}(Q))\approx \ell(Q)^d$, so 
$$ \int_{\mathbf{Sh}(Q)}|\nabla T (\chi_{\Omega} - \varphi_Q) (x)|^p dx\lesssim \ell(Q)^{d-p}.$$

The second term is bounded by hypothesis by a constant times $\norm{\varphi_Q}_{W^{1,p}(\Omega)}^p$, and 
$$\norm{\varphi_Q}_{W^{1,p}(\Omega)}^p\approx \norm{\varphi_Q}_{L^p(\Omega)}^p+\norm{\nabla\varphi_Q}_{L^p(\Omega)}^p\lesssim \ell(Q)^d + \ell(Q)^{d-p} \lesssim (R^p+1)\ell(Q)^{d-p}, $$
where $R$ is the side-length of the $R$-window $\mathcal{Q}$, proving that $\mu$ satisfies \rf{eqcotamesura}.
\end{proof}

\section{Final remarks}\label{secfinalremarks}
\begin{rem}
The article of Arcozzi, Rochberg and Sawyer \cite{ars} has been the cornerstone in our quest for necessary conditions related to Carleson measures. In fact their article provides a quick shortcut for the proof of Theorem \ref{theomain2} (avoiding Proposition \ref{propomesuradecarleson}) for simply connected domains of class $C^1$ in  the complex plane, and we believe it is worth to give a hint of the reasoning. 
\end{rem}

\begin{proof}[Sketch of the proof]
In the case of the unit disk, we found in the Key Lemma that if $T$ is a smooth convolution Calder\'on-Zygmund operator of order 1 bounded in $W^{1,p}(\DDD)$, then 
\begin{equation}\label{eqsumacubsdeguays}
\sum_{Q\in \mathcal{W}} \left|\fint_{3Q}f \, dm \right|^p \int_Q|\nabla T \chi_\DDD (z)|^p dm(z) \lesssim \norm{f}_{W^{1,p}(\DDD)}^p
\end{equation}
for all $f\in W^{1,p}(\DDD)$. 
If one considers $d\mu(z)=|\nabla T \chi_\DDD(z)|^p dm(z)$ and $\rho(z)=(1-|z|^2)^{2-p}$, then, when $f$ is in the Besov space of analytic functions on the unit disk $B_p(\rho)$,
$$\norm{f}_{B_p(\rho)}^p:=|f(0)|^p+\int_\DDD|f'(z)|^p(1-|z|^2)^p \rho(z) \frac{dm(z)}{(1-|z|^2)^2}\approx \norm{f}_{W^{1,p}(\DDD)}^p.$$
Using the mean value property (and \rf{eqcotamesura} for the error terms), one can see that if $T$ is bounded, then for every holomorphic function $f$ the bound in \rf{eqsumacubsdeguays} is equivalent to
\begin{equation*}
\int_\DDD |f(z)|^p |\nabla T \chi_\DDD(z)|^p dm(z) \lesssim \norm{f}_{B_p(\rho)}^p,
\end{equation*}
i.e., $\norm{f}_{L^p(\mu)}\lesssim \norm{f}_{B_p(\rho)}$. Following the notation in \cite{ars}, the measure $\mu$ is a Carleson measure for $(B_p(\rho), p)$, stablishing Theorem \ref{theomain2} for the unit disk by means of Theorem 1 in that article.

For $\Omega \subset \C$ Lipschitz and $f$ analytic in $\Omega$, we also have
\begin{align*}
\int_\Omega |f(z)|^p |\nabla T \chi_\Omega(z)|^p dm(z) \lesssim \norm{f}_{W^{1,p}(\DDD)}^p .
\end{align*}
If $\Omega$ is simply connected, considering a Riemann mapping $F:\DDD \to \Omega$, and using it as a change of variables, one can rewrite the previous inequality as
\begin{align*}
\int_\DDD  |f\circ F|^p \mu(F(\omega)) |F'(\omega)|^2 dm(\omega)
	& \lesssim |f(F(0))|^p  + \int_\DDD |(f\circ F)'(\omega)|^p|F'(\omega)|^{2-p} dm(\omega).
\end{align*}
Writing $d\tilde{\mu}(\omega)=\mu(F(\omega)) |F'(\omega)|^2 dm(\omega)$, and $\rho(\omega)=|F'(\omega)(1-|\omega|^2)|^{2-p}$, one has that given any $g$ analytic on $\DDD$, 
$$\norm{g}_{L^p(\tilde\mu)}\lesssim \norm{g}_{B_p(\rho)}.$$

So far so good, we have seen that $\tilde\mu$ is a Carleson measure for $(B_p(\rho),p)$, but we only can use \cite[Theorem 1]{ars} if
 two conditions on $\rho$ are satisfied. The first condition is that the weight $\rho$ is ``almost constant'' in Whitney squares, that is
 $$\mbox{for }x_1, x_2\in Q\in\mathcal{W} \implies \rho(x_1)\approx\rho(x_2),$$ and this is a consequence of Koebe distortion theorem, which asserts that for every $w\in\DDD$ we have
 $$|F'(\omega)| (1-|\omega|^2) \approx \dist(F(\omega), \partial \Omega)$$
 (see \cite[Theorem 2.10.6]{aim}, for instance). The second condition is the Bekoll\'e-Bonami condition, which is 
$$\int_Q (1-|z|^2)^{p-2}\rho(z) dm(z) \left( \int_Q \left( (1-|z|^2)^{p-2}\rho(z)\right)^{1-p'} dm(z)\right)^{p-1}\lesssim m(Q)^{p}.$$
If the domain $\Omega$ is Lipschitz with small constant depending on $p$ (in particular if it is $C^1$), then this condition is satisfied (see \cite[Theorem 2.1]{bekolle}).
\end{proof}

\begin{rem}
Quite likely, our arguments to prove the Key Lemma apply to more general domains, such as the so called uniform domains. However, for simplicity, we only deal with Lipschitz domains in this paper and  we do not pursue the objective of extending Theorem \ref{theoTP} to more general Sobolev extension domains.

We want to point out some open problems to conclude this exposition. First of all, when $n>1$ we have found a sufficient condition in terms of Carleson measures, but we do not know if this condition (or a similar one) is necessary.

Secondly, it would be interesting to study the fractional Sobolev spaces, $W^{s,p}(\Omega)$ for $s\notin \N$.

Finally we have obtained some results connecting the boundedness of the even smooth convolution Calder\'on-Zygmund operators to the geometry of the boundary of planar domains $\Omega$ which will be published in a forthcoming paper.
\end{rem}

\renewcommand{\abstractname}{Acknowledgements}
\begin{abstract}
The authors were funded by the European Research
Council under the European Union's Seventh Framework Programme (FP7/2007-2013) /
ERC Grant agreement 320501. Also, partially supported by grants 2014-SGR-75 (Generalitat de Catalunya), MTM2010-16232 and MTM2013-44304-P (Spanish government). The first author was also funded by a FI-DGR grant from the Generalitat de Catalunya, (2014FI-B2 00107).\end{abstract}

\end{document}